\renewcommand{\L}{\mathbb{L}}
\newcommand{\R}{\mathbb{R}}
\newcommand{\cZ}{\mathcal{Z}}
\newcommand{\bbP}{\mathbb{P}}
\def\bP{{\mathbf P}}
\newcommand{\cX}{{\mathcal X}}
\newcommand{\cY}{{\mathcal Y}}
\newcommand{\cO}{{\mathcal O}}
\newcommand{\cL}{{\mathcal L}}
\newcommand{\cA}{{\mathcal A}}
\newcommand{\cB}{{\mathcal B}}
\newcommand{\cU}{{\mathcal U}}
\newcommand{\cH}{{\mathcal H}}
\newcommand{\Hom}{\operatorname{Hom}}
\newcommand{\Sym}{\mathrm{Sym}}
\newcommand{\End}{\mathrm{End}}
\newcommand{\U}{\operatorname{U}}
\newcommand{\Perf}{\operatorname{Perf}}
\newcommand{\Hilb}{\rm Hilb}
\newcommand{\id}{\operatorname{id}}
\newcommand{\Pic}{\operatorname{Pic}}
\newcommand{\bbC}{{\mathbb C}}
\newcommand{\bbR}{{\mathbb R}}
\newcommand{\bbZ}{{\mathbb Z}}
\renewcommand{\bbP}{{\mathbb P}}
\newcommand{\bbQ}{{\mathbb Q}}
\newtheorem{thm}{Theorem}[section]
\newtheorem{theorem}[thm]{Theorem}
\newtheorem{cor}[thm]{Corollary}
\newtheorem{prop}[thm]{Proposition}
\newtheorem{lemma}[thm]{Lemma}
\newtheorem{conj}[thm]{Conjecture}
\newtheorem{remark}[thm]{Remark}
\newtheorem{defn}[thm]{Definition}
\newtheorem*{claim*}{Claim}
\numberwithin{equation}{section}
\begin{document}

\begin{abstract} Let $X$ be a smooth complex projective variety. The group of autoequivalences of the derived category of $X$ acts naturally on its singular cohomology $H^\bullet (X,\bbQ)$ and we denote by $G^{eq}(X)\subset Gl(H^\bullet (X,\bbQ))$ its image. Let $\overline{G^{eq}(X)}\subset Gl(H^\bullet (X,\bbQ)$ be its Zariski closure. We study the relation of the Lie algebra $Lie\overline{G^{eq}(X)}$ and the Neron-Severi Lie algebra $\mathfrak{g}_{NS}(X)\subset End (H(X,\bbQ))$ in case $X$ has trivial canonical line bundle.

At the same time for mirror symmetric families of (weakly) Calabi-Yau varieties we consider a conjecture of Kontsevich on the relation between the monodromy of one family and the group $G^{eq}(X)$ for a very general member $X$ of the other family.
\end{abstract}

\title{Neron-Severi Lie algebra, autoequivalences of the derived category, and monodromy}

\author{Valery A. Lunts}
\address{Department of Mathematics,
Indiana University,
Bloomington, IN
47405,}

\address{National Research University Higher School of Economics, Moscow, Russia}

\thanks{The author was supported by the Basic Research Program of the National Research 
University Higher School of Economics}

\maketitle

\section{Introduction}

\subsection{Lie algebra $\mathfrak{g}_{NS}(X)$ and the group $G^{eq}(X)$} Let $X$ be a smooth complex projective variety of dimension $n$. Consider the semi-simple operator $h\in End(H^\bullet (X,\bbQ))$ which acts as multiplication by $i-n$ on the space $H^i(X)$.
Every ample class $\kappa \in H^2(X,\bbQ)$ defines a Lefschetz operator
$$e_\kappa :=\cup \kappa :H^\bullet (X)\to H^{\bullet +2}(X)$$
i.e. $e_\kappa ^i:H^{n-i}(X)\to H^{n+i}(X)$ is an isomorphism.
In classical Hodge theory one also considers the (unique) operator
$$f_\kappa :H^\bullet (X)\to H^{\bullet -2}(X)$$
such that $(e_\kappa ,h,f_\kappa )\subset End(H^\bullet(X))$ is an $sl_2$-triple. Let $\mathfrak{g}_{NS}(X)\subset End(H^\bullet(X))$ be the Lie algebra generated by such $sl_2$-triples $(e_\kappa ,h,f_\kappa)$ for all ample classes $\kappa \in H^2(X,\bbQ)$. This Lie algebra is graded by the adjoint action of $h$. It is called the {\it Neron-Severi} Lie algebra of $X$ \cite{LL}. This Lie algebra is semi-simple \cite[Prop.1.6]{LL}.

On the other hand, one has the group of autoequivalences of the derived category $D^b(cohX)$. This group acts naturally on the cohomology $H (X,\bbQ)$ and we denote by $G^{eq}(X)$ its image in $Gl(H (X,\bbQ))$. Let $\overline{G^{eq}(X)}\subset
Gl(H (X,\bbQ))$ be the algebraic $\bbQ$-subgroup which is the Zariski closure of $G^{eq}(X)$, and let $L^{eq}(X):=Lie\overline{G^{eq}(X)}\subset End (H(X,\bbQ))$ be its Lie algebra.

The following theorem was proved in \cite{GLO}.

\begin{thm} \label{main conj for ab var} Let $A$ be an abelian variety. Then there is an equality of Lie subalgebras of $End(H(A,\bbQ))$:
\begin{equation}\label{eq ab var}L^{eq}(A)=\mathfrak{g}_{NS}(A)
\end{equation}
\end{thm}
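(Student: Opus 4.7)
The strategy is to compute both sides in the same linear-algebraic framework, namely the Mukai space of $A$, and then match them. Write $V = H^1(A,\bbQ)$ and $\hat V = H^1(\hat A,\bbQ) \simeq V^\ast$, so that $H^\bullet(A,\bbQ) = \wedge^\bullet V$ and $V \oplus \hat V$ is naturally symplectic under the Mukai pairing. The symplectic Lie algebra $\mathfrak{sp}(V \oplus \hat V)$ acts on $\wedge^\bullet V$ with $\wedge^2 V \subset \mathfrak{sp}$ acting by exterior multiplication, $\wedge^2 \hat V$ by contraction, and the Levi $\mathfrak{gl}(V)$ by its standard action on the exterior algebra. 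The plan is to realize both $\mathfrak{g}_{NS}(A)$ and $L^{eq}(A)$ as subalgebras inside the image of $\mathfrak{sp}(V \oplus \hat V)$ in $\End(\wedge^\bullet V)$.

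For the autoequivalence side, I would invoke the theorem of Orlov--Polishchuk describing $\operatorname{Auteq}(D^b(A))$: modulo shifts, translations, and tensoring by degree-zero line bundles (which act trivially on $H^\bullet$ up to scalars), the group is an arithmetic subgroup of a ``Mukai-symplectic'' group of automorphisms of $A \times \hat A$. Together with the action of line bundle twists by $\exp(c_1(L))$, the cohomological representation factors through the image of $\operatorname{Sp}(V \oplus \hat V)$ on $\wedge^\bullet V$. A Borel density argument then shows that the Zariski closure of the arithmetic image equals the full image of $\operatorname{Sp}(V \oplus \hat V,\bbQ)$, so that $L^{eq}(A)$ coincides with the image of $\mathfrak{sp}(V \oplus \hat V,\bbQ)$ (or the explicit subalgebra cut out by $NS(A)$ in the non-principally-polarized case).

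For the Neron-Severi side, each ample class $\kappa \in NS(A)\otimes\bbQ \subset \wedge^2 V$ gives $e_\kappa$ as an element of the $\wedge^2 V$-summand of $\mathfrak{sp}(V \oplus \hat V)$, with $f_\kappa$ correspondingly in $\wedge^2 \hat V$ and $h$ the standard grading element in $\mathfrak{gl}(V)$. The inclusion $\mathfrak{g}_{NS}(A) \subseteq L^{eq}(A)$ is then easy: the one-parameter family $L^{\otimes t} \mapsto \exp(t\kappa) \in G^{eq}(A)$ produces $e_\kappa \in L^{eq}(A)$ after Zariski closure and logarithm, and $f_\kappa$ is obtained by conjugating with a Fourier--Mukai involution (which, via a polarization, swaps $V$ and $\hat V$). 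The opposite inclusion requires showing that no symplectic element arising from autoequivalences lies outside the Lie subalgebra generated by the Lefschetz triples of ample classes.

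The main obstacle is exactly this last matching step: one must prove that the image of $\mathfrak{sp}(V \oplus \hat V,\bbQ)$ produced by $\operatorname{Auteq}(D^b(A))$ is already generated by iterated brackets of the $(e_\kappa,h,f_\kappa)$ for $\kappa \in NS(A)\otimes\bbQ$. A priori the autoequivalence side could be richer, so I would use the structural description from \cite{LL} of the Lie subalgebra generated by ample classes, together with a root-space analysis of $\mathfrak{sp}(V \oplus \hat V)$, to verify that even a single polarization already produces via commutators with $h$ and with other Neron-Severi generators the full Levi $\mathfrak{gl}(V)$ and the transverse root spaces that Fourier--Mukai contributes. This structural coincidence, rather than either explicit computation, is the heart of the argument.
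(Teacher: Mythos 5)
There are genuine gaps here, and the most serious one is quantitative rather than presentational. First, a structural slip: the Mukai pairing $Q$ on $\Lambda_\bbQ=V\oplus\hat V$, $Q((a_1,b_1),(a_2,b_2))=b_1(a_2)+b_2(a_1)$, is \emph{symmetric}, so the ambient algebra is $\mathfrak{so}(V\oplus\hat V,Q)$ acting on $\wedge^\bullet\hat V$ through the spin representation; the three-term decomposition you write, $\wedge^2V\oplus\mathfrak{gl}(V)\oplus\wedge^2\hat V$, is precisely the parabolic decomposition of $\mathfrak{so}$, not of $\mathfrak{sp}$ (whose nilradicals would be $\Sym^2$). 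Much more importantly, the Borel-density step as stated proves something false: the Zariski closure of $G^{eq}(A)$ is \emph{not} the image of the full classical group on $V\oplus\hat V$. By Orlov's exact sequence, $G^{eq}(A)$ is commensurable with $U(A)=\Aut(A\times\hat A)\cap SO(\Lambda,Q)$, an arithmetic subgroup of the centralizer $\U_{A,\bbQ}$ of the Hodge group $Hdg_A$ in $SO(\Lambda_\bbQ,Q_\bbQ)$ --- typically a very small reductive group --- and its Zariski closure is only the product of the non-compact factors of that centralizer. For a very general principally polarized $A$ one gets $L^{eq}(A)=sl_2=\mathfrak{g}_{NS}(A)$, nowhere near the full $\mathfrak{so}(V\oplus\hat V)$. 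Your parenthetical hedge about ``the subalgebra cut out by $NS(A)$'' is exactly where all the content of the theorem lives, and it is not supplied.

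The final matching step is likewise asserted rather than proved, and the specific mechanism proposed fails: a single polarization generates a single $sl_2$-triple, and no amount of bracketing with $h$ produces the Levi $\mathfrak{gl}(V)$; the size of $\mathfrak{g}_{NS}(A)$ is governed by all of $NS(A)\otimes\bbQ$, i.e.\ by the endomorphism algebra of $A$. The proof this paper invokes (from \cite{GLO}) is different in character: \cite{Pol} classifies the groups $\overline{G^{eq}(A)}$ (equivalently $U(A)_\bbQ$) case by case according to the Albert type of $\End^0(A)$, \cite{LL} classifies $\mathfrak{g}_{NS}(A)$ by the same invariant, and the theorem follows by comparing the two lists. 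The tractable half of a direct argument is the inclusion $\mathfrak{g}_{NS}(A)\subseteq L^{eq}(A)$ (line-bundle twists give $e_\kappa$, and conjugation by the Fourier--Mukai transform attached to a polarization gives $f_\kappa$, much as you indicate); it is the reverse inclusion that your outline does not reach and that the classification comparison is there to supply.
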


In \cite{Pol} the groups $\overline{G^{eq}(A)}$ were studied and classified according to the type of the abelian variety $A$. In \cite{LL} a similar classification of Lie algebras $\mathfrak{g}_{NS}(A)$ was obtained. Theorem
\ref{main conj for ab var} follows from the comparison of the two lists.

We expect a similar phenomenon for hyperkahler manifolds.

\begin{conj}\label{conj a} Let $X$ be a projective hyperkahler manifold. Then we have an equality of Lie subalgebras of $End(H^\bullet (X,\bbQ))$:
\begin{equation}\label{equality of lie subalg}
L^{eq}(X)=\mathfrak{g}_{NS}(X)
\end{equation}
\end{conj}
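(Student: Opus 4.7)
The plan is to establish the two inclusions $\mathfrak{g}_{NS}(X) \subset L^{eq}(X)$ and $L^{eq}(X) \subset \mathfrak{g}_{NS}(X)$ separately, following the template of Theorem~\ref{main conj for ab var} but with hyperkahler-specific inputs replacing the abelian-variety structure theory. A useful preliminary step is to invoke the Looijenga-Lunts description of $\mathfrak{g}_{NS}(X)$ as an explicit orthogonal Lie algebra of the form $\mathfrak{so}(NS(X)_\bbQ \oplus U)$, with $U$ the hyperbolic plane coming from $H^0 \oplus H^{2n}$, acting on the subring of $H^\bullet(X,\bbQ)$ generated by $H^2$. This reduces \eqref{equality of lie subalg} to a comparison of two explicit orthogonal subalgebras.

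For $\mathfrak{g}_{NS}(X) \subset L^{eq}(X)$, I would note that for every line bundle $L$ the autoequivalence $-\otimes L$ acts on $H^\bullet(X,\bbQ)$ as $\exp(e_{c_1(L)})$; differentiating in $c_1(L)$ places every raising operator $e_\kappa$, and hence the whole Borel subalgebra of $\mathfrak{g}_{NS}(X)$ generated by the $e_\kappa$'s, inside $L^{eq}(X)$. To recover the lowering operators $f_\kappa$, one has to feed in an autoequivalence whose cohomological action does not commute with the weight operator $h$: natural candidates are Markman's Fourier-Mukai kernels on moduli of sheaves of K3 type, a Fourier-Mukai transform along the relative Poincare sheaf of a Lagrangian fibration, or, when $X$ is a Hilbert scheme, a $\bbP^n$-twist along the tautological object. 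Conjugating the upper Borel by any such operator should produce the opposite Borel, and together with $h$ these generate $\mathfrak{g}_{NS}(X)$.

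For the reverse inclusion, I would combine Orlov's representability theorem with the fact that every autoequivalence preserves the Mukai pairing on $H^\bullet(X,\bbQ)$; consequently $\overline{G^{eq}(X)}$ lies in the rational orthogonal group of this pairing and $L^{eq}(X)$ inside the corresponding $\mathfrak{so}$. Using the Looijenga-Lunts presentation, one checks that the orthogonal decomposition into the $H^2$-generated subring and its complement is $\overline{G^{eq}(X)}$-stable and that on the first summand any operator in $L^{eq}(X)$ must respect the Hodge-theoretic constraints forced by the action on $H^2$, compelling it to lie in $\mathfrak{g}_{NS}(X)$.

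The main obstacle is the construction step in paragraph two: producing enough autoequivalences that break the weight grading. For K3 surfaces these come cheaply from spherical twists, but for higher-dimensional hyperkahlers the supply of $\bbP^n$-objects and Markman-style kernels is much less uniform, and one has to argue that their cumulative cohomological images, under Zariski closure, exhaust the opposite Borel of $\mathfrak{g}_{NS}(X)$. A natural fallback is to verify the identity first on the tautological subring $\mathrm{Sym}^\bullet H^2 \subset H^\bullet(X,\bbQ)$, where it reduces to an explicit comparison of orthogonal Lie algebras, and then propagate to the Markman-style extra summands by equivariance of the autoequivalence action.
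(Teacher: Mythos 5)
You are attempting to prove a statement that the paper itself leaves as a conjecture: the paper establishes it only for K3 surfaces (Corollary~\ref{cor conj for k3 surf}), cites Taelman for $\Hilb^2$ of a K3 surface, and otherwise proves just one inclusion of a weakened form, namely $L^{eq}(X)\vert_{\mathfrak{g}_{NS}(X)\cdot 1}\subseteq \mathfrak{g}_{NS}(X)$ (Theorem~\ref{half of conj for hyperkahler}). The gap in your argument is exactly the step you flag as ``the main obstacle,'' and it is not a technicality but the open core of the problem. Tensoring by line bundles only yields the raising operators $e_\kappa$, which by definition already lie in $\mathfrak{g}_{NS}(X)$ (the paper notes this is the only immediately visible relation between $G^{eq}(X)$ and $\mathfrak{g}_{NS}(X)$); to obtain the lowering operators $f_\kappa$ you must exhibit autoequivalences whose cohomological action does not commute with the grading operator $h$. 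For a general projective hyperkahler manifold none of your candidates is available: $X$ need not carry a Lagrangian fibration, need not be a moduli space of sheaves on a K3 surface, and need not be a Hilbert scheme, and no general supply of $\bbP^n$-objects or spherical-type objects is known. Your fallback of first verifying the identity on the subring generated by $H^2$ does not help, since the difficulty is the existence of the required autoequivalences, not the subspace on which to test their action.

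The reverse inclusion as you sketch it is also not established at the level of generality you claim. The stability of the relevant decomposition of $H^\bullet(X,\bbQ)$ under $\overline{G^{eq}(X)}$ and the preservation of the component $\mathfrak{g}_{tot}(X)\cdot 1$ are themselves deep results of Taelman (quoted as Theorem~\ref{thm from tael}), not something one ``checks'' from Orlov representability plus invariance of the Mukai pairing. Even granting them, the paper's argument only controls the restriction of $L^{eq}(X)$ to the subrepresentation $\mathfrak{g}_{NS}(X)\cdot 1$: it passes through the isometric embedding $\Psi:\mathfrak{g}_{tot}(X)\cdot 1\hookrightarrow \Sym^d\tilde{H}^2(X,\bbQ)$ of Lemma~\ref{lemma on connection}, lifts the relevant discrete group to a group of Hodge isometries of the Mukai lattice $\tilde{H}^2(X,\bbQ)$, and then invokes the finiteness of the group of Hodge isometries of the transcendental lattice (Lemma~\ref{for future use}). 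Nothing in your outline controls the action of $L^{eq}(X)$ on the complement of $\mathfrak{g}_{tot}(X)\cdot 1$ inside $H^\bullet(X,\bbQ)$, so even the single inclusion $L^{eq}(X)\subset\mathfrak{g}_{NS}(X)$ on all of cohomology remains open, let alone the asserted equality.
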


The conjecture is easily verified for K3 surfaces (Corollary \ref{cor conj for k3 surf}). By a recent result of Taelman \cite{Tael} it also holds for $\Hilb ^2(X)$ of a K3 surface $X$.

\begin{remark} \label{weak conj} Consider the "biggest" $\mathfrak{g}_{NS}(X)$-submodule of $H^\bullet (X,\bbQ)$, which is generated by $1\in H^0(X,\bbQ)$. Denote it by
$\mathfrak{g}_{NS}(X)\cdot 1$. A weaker version of Conjecture \ref{conj a} would say that the Lie algebra $L^{eq}(X)$ preserves this subspace and we have the equality of subalgebras of $End (\mathfrak{g}_{NS}(X)\cdot 1)$:
\begin{equation}\label{weak equality} L^{eq}(X)\vert _{\mathfrak{g}_{NS}(X)\cdot 1}=\mathfrak{g}_{NS}(X)
\end{equation}
\end{remark}

Using results of \cite{Tael} we prove (Theorem \ref{half of conj for hyperkahler}) one inclusion in this weak version of the conjecture:
\begin{equation}\label{prove half of weak form}L^{eq}(X)\vert_{\mathfrak{g}_{NS}\cdot 1}\subseteq \mathfrak{g}_{NS}(X)
\end{equation}

For a general smooth projective variety $X$ a priori it is not clear that either side of \eqref{equality of lie subalg} is contained in the other.  Conjecture \ref{conj a} is false for any positive dimensional smooth projective variety $X$ which is Fano or of  general type. Note however that for any smooth projective variety $X$ the Lie algebra of the subgroup of $\overline{G^{eq}(X)}$ corresponding to tensoring with line bundles is by definition contained in $\mathfrak{g}_{NS}(X)$ (this is the only immediately visible relation between the group $G^{eq}(X)$ and the Lie algebra $\mathfrak{g}_{NS}(X)$).

It is natural to ask if Conjecture \ref{conj a} holds for Calabi-Yau varieties. An easy counterexample is given by a smooth hypersurface $X\subset \bP ^n$ of degree $n+1$, assuming that $n=2k\geq 4$. Both $G^{eq}(X)$ and $\mathfrak{g}_{NS}(X)$ preserve the space $H^{even}(X,\bbQ)$ with its  skew-symmetric Mukai pairing. Clearly $\mathfrak{g}_{NS}(X)=sl_2$, but $L^{eq}(X)=\mathfrak{sp}(H^{even}(X,\bbQ))$ (Theorem \ref{half about geq}). So we have the strict inclusion
$$L^{eq}(X)\vert_{\mathfrak{g}_{NS}\cdot 1}\supsetneq \mathfrak{g}_{NS}(X)$$

Nevertheless we expect that Conjecture \ref{conj a} hold for "most" CY varieties as well. Let us explain why. The reason that Conjecture \ref{conj a} fails in the above counter example is that the Picard rank of $X$ is 1 and hence  $\mathfrak{g}_{NS}(X)$ is too small. The classification Theorem 6.8 in
\cite{LL} suggests that one should expect $\mathfrak{g}_{NS}(X)$ to be either small, that is to be nonzero only in degrees $-2,0,2$ (which is rare), or else to be maximal, i.e. to be the full Lie algebra preserving a nondegenerate form. For example, Conjecture \ref{conj a} should hold if in the above example one replaces $\bbP ^n$ with a smooth toric Fano variety (which is not a product) with Picard rank $\geq 2$, and take $X$ to be a smooth anticanonical divisor.

\subsection{Kontsevich's conjecture for mirror symmetric families of (weakly) CY varieties} The following sentence appears in the introduction section of \cite{BorHor}: "Kontsevich \cite{Kon} conjectured that the action on cohomology of the group of self-equivalences of the bounded derived category of coherent sheaves on a smooth projective Calabi–Yau variety matches the monodromy action on the cohomology of the mirror Calabi–Yau variety associated to the variations of complex structures."

Below we state our version of Kontsevich's conjecture (Conjecture \ref{subs main conj}). First let us  make a few definitions and reminders.

Let $\cX /S$ be a family of smooth complex projective varieties over a connected base $S$. Fixing a point $s\in S$ we get the fundamental group $\pi _1(S,s)$ and its monodromy representation
$$\mu :\pi _1(S,s)\to Gl(H^\bullet(X_s,\bbQ))$$
in the cohomology $H^\bullet(X_s,\bbQ)$ of the fiber $X_s$. We denote by $G^{mon}(\cX)$ the image of $\mu$. This is a discrete group whose isomorphism class does not depend on the choice of a point $s\in S$. It is called the monodromy group of $\cX$. Also  denote by $G^{eq}(\cX)$ the group $G^{eq}(X_s)$ for a {\it very general} fiber $X_s$ of $\cX$. (A fiber is very general if it lies outside of a countable union of analytic subvarieties of the base.)

\begin{defn} The equivalence relation $\sim $ on the collection of discrete groups is generated by allowing to replace a group $G$ by a subgroup of finite index or by the quotient of $G$ by a finite normal subgroup. If $G\sim G'$ we say that $G$ and $G'$ are {\bfseries isomorphic up to finite groups}.
\end{defn}

In the mathematical literature there exist at least two series of "mirror symmetric" (MS) families of CY varieties. Namely, one has

(I) Mirror symmetric (MS) families of lattice polarized K3 surfaces \cite{Dolg},\cite{Pink}.

(II) MS families of anticanonical divisors in dual Fano toric varieties \cite{Bat}.

Since the author is not aware of a mathematical definition of MS, we use this term in quotation marks. However, the definition of the above families is indeed based on symmetry relations: of lattices in case (I) and of polytopes in case (II). In Section \ref{sect for ab var} below we introduce a third series:

(III) MS families of abelian varieties.

This is defined using a simple symmetry relation of $\bbQ$-algebraic groups.

In our understanding the point of MS is that a simple minded duality (as in (I), (II), (III)) implies a duality relation involving highly sophisticated objects, like the derived category. Let us formulate the following principle (Kontsevich's conjecture), which we call "a conjecture" for simplicity of statement and of reference.

\begin{conj} \label{subs main conj} Let $\cX$ and $\cX ^\vee$ be mirror symmetric families of complex smooth projective varieties with trivial canonical line bundle. Then the groups $G^{mon}(\cX)$ and $G^{eq}(\cX ^\vee)$ are isomorphic up to finite groups.
\end{conj}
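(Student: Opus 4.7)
The plan is to replace the statement about discrete groups by an equality of Lie subalgebras of $End(H^\bullet(X^\vee_s,\bbQ))$, where $X^\vee_s$ is a very general fiber of $\cX^\vee$. Both Zariski closures $\overline{G^{mon}(\cX)}$ and $\overline{G^{eq}(\cX^\vee)}$ are algebraic $\bbQ$-subgroups of $Gl(H^\bullet)$ preserving an integral lattice together with a nondegenerate form (the monodromy-invariant intersection form on one side and the Mukai pairing on the other), so, once the two cohomology spaces are identified via mirror symmetry, each group is commensurable with an arithmetic subgroup of its own Zariski closure. Hence it suffices to prove the Lie-algebra equality
\[
\mathfrak{g}_{mon}(\cX) := Lie\,\overline{G^{mon}(\cX)} \;=\; L^{eq}(\cX^\vee),
\]
after which the conclusion follows by standard commensurability arguments for arithmetic groups.

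The Neron-Severi Lie algebra is the natural bridge between the two sides. I would split the equality above into (a) the mirror comparison $\mathfrak{g}_{mon}(\cX) = \mathfrak{g}_{NS}(X^\vee_s)$, and (b) the fiberwise equality $\mathfrak{g}_{NS}(X^\vee_s) = L^{eq}(X^\vee_s)$. Step (b) is exactly Conjecture \ref{conj a} (or Theorem \ref{main conj for ab var} in the abelian case), so the new content lies in (a). For (a) the key tool is Schmid's nilpotent orbit theorem at maximally unipotent boundary points of the moduli of $\cX$: each such boundary point produces a nilpotent monodromy operator $N$ and a limit weight filtration whose grading is the operator $h$ of the excerpt. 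Under the mirror identification, maximally unipotent boundary points correspond to large-volume limits on the mirror, and $N$ is identified with cup-product by an ample class $\kappa$ on $X^\vee_s$. Varying the boundary point produces an $sl_2$-triple $(e_\kappa,h,f_\kappa)$ for each $\kappa$ in a full-dimensional cone of ample classes, giving the inclusion $\mathfrak{g}_{NS}(X^\vee_s) \subseteq \mathfrak{g}_{mon}(\cX)$; the reverse inclusion follows from the semi-simplicity of $\mathfrak{g}_{NS}(X^\vee_s)$ cited from \cite{LL}, provided enough boundary points are available to account for all monodromy.

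The main obstacle will be the reverse containment $\mathfrak{g}_{mon}(\cX) \subseteq \mathfrak{g}_{NS}(X^\vee_s)$ in step (a): it requires a precise mirror identification at enough boundary points to cover the whole monodromy, and is realistic only in the three mirror-symmetric settings (I), (II), (III) listed in the excerpt (lattice polarized K3 surfaces, Batyrev mirror pairs of anticanonical divisors in dual Fano toric varieties, and mirror abelian varieties), where the mirror pairing of moduli and of boundary data is explicit. A secondary obstacle is that Conjecture \ref{conj a} in step (b) is itself open, restricting unconditional proofs by this strategy to the cases (abelian varieties, K3 surfaces, $\Hilb^2$ of K3 by Taelman) where the equality $L^{eq}=\mathfrak{g}_{NS}$ at the very general fiber is already known. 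The realistic output of the plan is thus a proof of Conjecture \ref{subs main conj} within each of the three families where both inputs are available, with the abelian family (III) the most tractable since Theorem \ref{main conj for ab var} and the classical monodromy description of polarized abelian varieties are both in hand.
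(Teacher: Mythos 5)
Your reduction of Conjecture~\ref{subs main conj} to an equality of Lie algebras rests on the assertion that each of $G^{mon}(\cX)$ and $G^{eq}(\cX^\vee)$ is ``commensurable with an arithmetic subgroup of its own Zariski closure'' because it is discrete and preserves a full-rank lattice. That assertion is the gap: a discrete, lattice-preserving, Zariski-dense subgroup of a semisimple $\bbQ$-group need not have finite index in the integral points (it can be thin), so even a proven equality $Lie\,\overline{G^{mon}(\cX)}=L^{eq}(\cX^\vee)$ does not yield that the two discrete groups are isomorphic up to finite groups. The paper is explicit that arithmeticity is exactly what is missing in the toric case (II): there it only establishes Zariski density of $G^{eq}(X)$ in $Sp(H^{even}(X,\bbQ))$ (Theorem~\ref{half about geq}) and records the arithmeticity of both $G^{eq}$ and $G^{mon}$ as an expectation, pointing to Beauville's theorem --- whose proof requires Janssen's work on vanishing lattices, not merely density --- as the model. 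Your plan therefore recovers at best a Lie-algebra-level statement, which is strictly weaker than the conjecture; and your step (a), matching monodromy at maximally unipotent boundary points with cup product by ample classes on the mirror, is invoked in a generality in which no such mirror identification of boundary data has been established.

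In the two cases where the conjecture is actually proved, the paper works directly with the discrete groups rather than passing through Lie algebras and Conjecture~\ref{conj a}. For lattice-polarized K3 surfaces both sides are computed explicitly: the monodromy group is $\Gamma_{M^\vee}$, of finite index in $O(M\oplus U)$, while $G^{eq}(X)$ of a very general fiber has index at most $2$ in $O_{hdg}(\tilde{H}^2(X,\bbZ))$, and the two are compared via a restriction homomorphism with finite kernel and finite-index image as in Lemma~\ref{easy useful lemma}. For abelian varieties the monodromy group of $\cB$ is identified with $G/Aut(B)$ and compared with $U(A)^0$ using Borel's density theorem and the finiteness of the index of an arithmetic lattice in its normalizer (Lemma~\ref{furman}). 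These are genuinely finite-index arguments about discrete groups; neither factors through $\mathfrak{g}_{NS}$ or through degenerations of Hodge structure, and your proposal as written would not close even these known cases without supplying the arithmeticity input it currently assumes.
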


We prove Conjecture \ref{subs main conj} for families (I), (III) and present some evidence for it in case (II). Let us briefly summarize our results on Conjecture \ref{subs main conj} in the three examples.

\subsubsection{Lattice polarized K3 surfaces} Let $L$ be the lattice of a K3 surface. Recall \cite{Dolg} that primitive sublattices $M,M^\vee \subset L$ of signatures $(1,s)$ and $(1,18-s)$ respectively, are called {\it mirror symmetric} if
$$M^\perp _L=M^\vee \oplus U$$

Following the works \cite{Dolg},\cite{Pink} we consider the ample $M$- and $M^\vee$-polarized families, $\cU^a _M$ and $\cU^a _{M^\vee}$ respectively, of K3 surfaces. We check Conjecture \ref{subs main conj} for these families. This is just a pleasant exercise, since one knows everything about the group $G^{eq}$ and the moduli of K3 surfaces.

\subsubsection{Abelian varieties} We extend the work \cite{GLO} by defining the notion of {\it mirror symmetric families} of abelian varieties. In loc.cit. we
considered {\it algebraic pairs} $(A,\omega _A)$ where $A$ is an abelian variety and $\omega _A$ is an element of the {\it complexified ample cone} $C_A$ of $A$. Then we defined the mirror symmetry relation between algebraic pairs $(A,\omega _A)$ and $(B,\omega _B)$. One feature of this relation is the natural inclusion of algebraic $\bbQ$-groups
\begin{equation}\label{incl of alg groups}
Hdg_B\subseteq \overline{G^{eq}(A)},\quad Hdg_A\subseteq \overline{G^{eq}(B)}
\end{equation}
Now we say that the pairs $(A,\omega _A)$ and $(B,\omega _B)$ and {\it perfectly mirror symmetric} (PMS) if the inclusions \eqref{incl of alg groups} are equalities. Such PMS algebraic pairs naturally give rise to families of abelian varieties
\begin{equation}\label{dual families in intro}\cA:= \{A_{\eta _B}\ \vert \ \eta _B\in C_B\}\quad \text{and}\quad
\cB:=\{B_{\eta _A}\ \vert \ \eta _A\in C_A\}
\end{equation}
with bases $C_B$ and $C_A$ respectively. We call these families {\it mirror symmetric} (Definition \ref{defn of mirror sym for ab var}). A proof of Conjecture \ref{subs main conj} for such families is in Section \ref{proof of conj for fam of ab var}.

\subsubsection{Anticanonical hypersurfaces in Fano toric varieties} Batyrev \cite{Bat} constructs mirror symmetric families of CY varieties in the following way: he starts with two dual lattices $M\simeq \bbZ ^{n+1}$ and $N=M^*$ and a pair of dual reflexive polytopes $\Delta \subset M_\bbQ$, $\Delta ^\vee \subset N_\bbQ$. These dual polytopes define a pair of projective Gorenstein toric Fano varieties $\bbP _\Delta$ and $\bbP _{\Delta ^\vee}$. The induced families $\cX$ and $\cX ^\vee$ of anticanonical divisors consist of Gorenstein CY varieties. These are the MS families (see Section \ref{CY} for details).

Assume that the toric variety $\bbP _\Delta$ is smooth. Then the family $\cX$ will consist of smooth CY varieties $X$ and the group $G^{eq}(\cX)$ is defined. Assume in addition that $n$ is odd. Then for any member $X$ of the family $\cX$ we have
$G^{eq}(X)\subset Sp(H^{even}(X,\bbQ))$ and also $G^{mon}(\cX ^\vee)\subset Sp(H^n(X^\vee ,\bbQ))$. The spaces $H^{even}(X,\bbQ)$ and $H^n(X^\vee ,\bbQ)$ are isomorphic and we expect that $G^{eq}(\cX)$ and $G^{mon}(\cX ^\vee)$ are arithmetic subgroups in the corresponding isomorphic symplectic groups.
 We prove a little weaker statement for the group $G^{eq}(X)$. Namely we show that the Zariski closure $\overline{G^{eq}(X)}$ is equal to $Sp(H^{even}(X,\bbQ))$ for any $X$ in the family $\cX$. Some evidence is also provided for the group $G^{mon}(\cX ^\vee)$.

\subsubsection{}
Strictly speaking, among the 3 families mentioned above, Conjecture \ref{subs main conj} can be tested only in case (II), where the actual universal family exists. In cases (I) and (III) one typically has only the coarse moduli space $\overline{S}$, which is the quotient by a discrete group of an analytic space $S$ that is a base of an actual family $\cX \to S$. (For example, $\overline{S}$ can the quotient of the Lobachevsky upper half plane $S$ by the group $SL(2,\bbZ)$ and $\cX \to S$ the natural family of elliptic curves).
So in order to make Conjecture \ref{subs main conj} applicable to cases (I) and (III) we need to extend appropriately the notion of the monodromy. This is done in \ref{general monodromy} below.

\subsubsection{} Some aspects of Kontsevich's conjecture were already studied in \cite{Hor} and in \cite{Szen}. However, the intersection of results in loc. cit. with ours appears to be minimal.

\subsection{Organization of the paper}
Section 2 collects some general results on Fourier-Mukai transforms and gives the definition of the monodromy in a somewhat nonstandard situation. In section 3 we discuss Conjecture \ref{conj a} for hyperkahler varieties. Sections 4 and 5 deal with mirror symmetry for families of K3 surfaces and abelian varieties respectively. In both cases the Conjecture \ref{subs main conj} is proved. In the last Section 6 we discuss the case of Calabi-Yau hypersurfaces in toric varieties and prove some partial results in the direction of the conjecture.

\subsubsection{Acknowledgments} Alex Furman helped us with the proof of Lemma \ref{furman}. Michael Larsen, Igor Dolgachev, Alexander Efimov, Daniel Huybrechts and Victor Batyrev answered many of our questions. Paul Horja sent us his notes of Kontsevich's talk \cite{Kon} and Balazs Szendroi informed us of the paper \cite{Szen}. It is our pleasure to thank all of them. We are also grateful to the anonymous referee for careful reading of the manuscript, finding mistakes, and making useful remarks and suggestions.

\section{Some generalities and extension of the notion of monodromy}

\subsection{Notation} \label{notation} We consider smooth complex projective varieties. For such a variety $X$, $H (X,\bbQ)$ denotes the singular cohomology of the corresponding analytic manifold. The bounded derived category of coherent sheaves on $X$ is denoted by $D^b(X)$. Its group of autoequivalences is $AutD^b(X)$. There exists the Chern character map
$$ch:D^b(X)\to H(X,\bbQ)$$
Let $\sqrt{td_X}\in H(X,\bbQ)$ be the square root of the Todd class of $X$. For $F\in D^b(X)$ one defines its {\it Mukai vector}  $v(F)$ \cite[5.28]{Huyb} as
$$v(F):=ch(F)\cup \sqrt{td_X}\in H(X,\bbQ)$$

\subsection{Action of the group $AutD^b(X)$ on the cohomology $H(X,\bbQ)$}

There is a natural homomorphism of groups $\rho _X:AutD^b(X)\to Gl(H(X,\bbQ))$. Let us recall it.

Consider the two projections $X\stackrel{p}{\leftarrow} X\times X\stackrel{q}{\rightarrow} X$. It is known \cite{Or1} that any autoequivalence $\Phi \in AutD^b(X)$ is given by a Fourier-Mukai functor $\Phi _E$ for a unique kernel $E\in D^b(X\times X)$. That is $$\Phi (-)=\Phi _E(-):={\bf R}q_*(p^*(-)\stackrel{\bf L}{\otimes} E)$$
This operation is compatible with the Mukai vector in the following sense. Any $e\in H(X\times X)$ defines the corresponding cohomological transform $\Phi ^H_e$
$$\Phi ^H_e(-)=q_*(p^*(-)\cup e):H(X,\bbQ)\to H(X,\bbQ)$$
Then for any kernel $E\in D^b(X\times X)$, and $F\in D^b(X)$ we have
$$\Phi ^H_{v(E)}(v(F))=v(\Phi _E(F))$$
\cite[5.29]{Huyb}
and the correspondence $\Phi _E\mapsto \Phi _{v(E)}$ is the group homomorphism
$$\rho _X:AutD^b(X)\to Gl(H(X,\bbQ))$$
\cite[5.32]{Huyb}. We note that the action of $AutD^b(X)$ on $H(X,\bbQ)$ preserves the {\it Mukai pairing} \cite[5.44]{Huyb}, which is nondegenerate and is a modification of the Poincare pairing \cite[5.42]{Huyb}.

\begin{defn} \label{defn of geq} Denote by $G^{eq}(X)$ the image of the homomorphism $\rho _X$.
\end{defn}

The group $G^{eq}(X)$ rarely preserves the integral cohomology $H(X,\bbZ)\subset H(X,\bbQ)$ but it preserves a different lattice.
Consider the topological K-group $K_{top}(X)=K^0_{top}(X)\oplus K^1_{top}(X)$ and the map
$$v_{top}:K_{top}(X)\to H(X,\bbQ),\quad F\mapsto \sqrt{td_X}\cdot ch(F)$$
The image $im(v_{top})$ is a lattice in $H(X,\bbQ)$ of full rank. This lattice is preserved by the group $G^{eq}(X)$ \cite{AdTho}.
In particular, $G^{eq}(X)$ is a {\it discrete} subgroup of $GL(H(X,\bbQ))$.

%**Add that $\nu (E\boxtimes E')=\nu (E)\boxtimes \nu (E')$**. Hence
%\begin{equation}\label{fourier mukai for product} \Phi ^H_{E\boxtimes E'}=\Phi ^H_{E}\boxtimes \Phi ^H_{E'}
%\end{equation}

\subsection{Monodromy group} \label{general monodromy} Besides considering mirror dual universal families of CY varieties we also want to study the case when only coarse moduli spaces exist. Let us make a rather ad hoc definition of the monodromy group in that case. The definition seems reasonable and suffices for our purposes.

Let $f:\cX\to S$ be a continuous map of topological spaces which is a locally trivial fibration and whose fibers are compact complex manifolds (with certain additional structure, for example, an embedding in a projective space (a polarization) or a multi-polarization, or a fixed sublattice in the Neron-Severi group). Assume that $S$ is connected. Assume also that the (graded) local system ${\bf R}^\bullet f_* \bbQ _{\cX}$ is trivial. Let $G$ be a  discrete  group that acts on $S$ and this action lifts to an action on the local system ${\bf R}^\bullet f_* \bbQ _{\cX}$.
 Let $K\subset G$ denote the kernel of the $G$-action on $S$.
So elements of $K$ act by fiberwise automorphisms of the local system ${\bf R}^\bullet f_* \bbQ _{\cX}$.
Suppose that the following holds:
\begin{itemize}
\item[1.] The $G$-action on the space of global sections $H^0(S,{\bf R}^\bullet f_* \bbQ _{\cX})$ is effective (i.e. every $1\neq g\in G$ acts nontrivially).
\item[2.] The $G/K$-action on $S$ is generically free. More precisely there exists a countable union $Z\subset S$ of closed subsets such that the complement  $S^0:=S\backslash Z$ is everywhere dense and  $G/K$-action on $S^0:=S\backslash Z$ is free.
\item[3.] The quotient space $\overline{S^0}:=S^0/G=S^0/(G/K)$ is the coarse moduli space of  complex manifolds (with the given additional structure) appearing as fibers in the family $f$ over $S^0$ (that is, points of $\overline{S^0}$ are in bijection with isomorphism classes of fibers in the family $\cX\vert_{S^0}$.)
\end{itemize}

\begin{defn} \label{def mon} In the above situation we call $G/K$  the {\bfseries monodromy group} of the family $f:\cX\to S$. We denote this group $G^{mon}(\cX)$. (In case the $G/K$-action on $S^0$ is free only modulo a finite kernel, we say the $G/K$ is the {\bfseries monodromy group up to finite groups}).
\end{defn}

If the $G$-action of $S$ is free and $\overline{S}=S/G$ is a fine moduli space, i.e. the family $f:\cX\to S$ descends to a universal family $\overline{f}:\overline{\cX}\to \overline{S}$, the group $G=G^{mon}(\cX)$ coincides with the monodromy group of the local system ${\bf R}^\bullet \overline{f}_* \bbQ _{\overline{\cX}}$.

For a nontrivial example one can take $f:\cX \to S$ to be the natural family of elliptic curves over the Lobachevsky upper half plane $S$. Then the discrete group $G=SL(2,\bbZ)$ acts on $S$ and the quotient $\overline{S}=S/G$ is the coarse moduli space of elliptic curves. This $G$ action is not free, so there is no universal family $\overline{f}:\overline{\cX}\to \overline{S}$ and $G$ is not the topological fundamental group of $\overline{S}$. However,
according to Definition \ref{def mon}, $G$ is the monodromy group of the family $f:\cX\to S$. This fits well with Conjecture \ref{subs main conj}. Indeed, the family of elliptic curves  $f:\cX\to S$ is mirror symmetric to itself (Definition \ref{defn of mirror sym for ab var}) and the group $G^{eq}(E)$ of a general elliptic curve is the group $G$.

\begin{remark} We will show that in case of mirror symmetric families of abelian varieties or lattice polarized families of K3 surfaces there exists the monodromy group in the sense of Definition \ref{def mon}.
\end{remark}

\section{Conjecture \ref{conj a} for hyperkahler manifolds}

\subsection{Construction of the Lie algebras $\mathfrak{g}_{NS}$ and $\mathfrak{g}_{tot}$ for a hyperkahler manifold}\label{prelim on hyperk}

For a smooth projective variety $X$ one defines the {\it total} Lie algebra (sometimes called the LLV Lie algebra) $\mathfrak{g}_{tot}(X)\subset End(H(X,\bbQ))$ in the same way as $\mathfrak{g}_{NS}(X)$ but using all Lefschetz elements in $H^2(X,\bbQ)$ and not just in $NS(X)$ \cite{LL},\cite{Verb}. We recall the description of these Lie algebras for a hyperkahler manifold.

Let $V$ be a finite dimensional $\bbQ$-vector space with a nondegenerate symmetric bilinear form $q$. Consider the graded vector space
$$\tilde{V}:=\bbQ  e\oplus V\oplus \bbQ  \eta$$
where $\deg(e)=0,$ $\deg(V)=2,$ $\deg(\eta) =4$. Extend the form $q$ to a form $\tilde{q}$ on $\tilde{V}$ by putting $\tilde{q}(e,\eta)=1$, $\tilde{q}(e,V)=\tilde{q}(\eta ,V)=0$.

We make $\tilde{V}$ into a graded commutative algebra by defining multiplication
$$x e:=x,\quad \eta e:=\eta ,\quad x y:=q(x,y) \eta$$
for $x,y\in V$. Every nonisotropic $x\in V$ defines a Lefschetz operator on $\tilde{V}$, hence gives rise to an $sl_2$-triple. All such triples generate a graded Lie subalgebra $\mathfrak{g}(V)\subset End(\tilde{V})$. This is a graded Lie algebra
$$\mathfrak{g}(V)=\mathfrak{g}(V)_{-2}\oplus \mathfrak{g}(V)_{0}\oplus \mathfrak{g}(V)_2$$
and $\mathfrak{g}(V)=\mathfrak{so}(\tilde{V},\tilde{q})$ \cite[Sect.9]{Verb}. Moreover, $\mathfrak{g}(V)_{0}=\mathfrak{so}(V,q)\oplus \bbQ$.

If $V'\subset V$ is a subspace such that the form $q':=q\vert _{V'}$ is nondegenerate, consider a similar extension $(\tilde{V'},\tilde{q'})\subset (\tilde{V},\tilde{q})$. One can generate a Lie subalgebra $\mathfrak{g}(V')\subset End(\tilde{V})$ by using only the Lefschetz operators from $V'$. Then again $\mathfrak{g}(V')=\mathfrak{so}(\tilde{V'},\tilde{q'})$ and $\mathfrak{g}(V')_{0}=\mathfrak{so}(V',q')\oplus \bbQ$.

The above construction is applicable to any smooth projective surface $Y$. Namely, by taking $V=H^2(Y,\bbQ)$ and $V'=NS(Y)_\bbQ$ we get
$$\mathfrak{g}(V)=\mathfrak{g}_{tot}(Y),\quad \mathfrak{g}(V')=\mathfrak{g}_{NS}(Y)$$

More interestingly, if $X$ is a projective hyperkahler manifold, $V=H^2(X,\bbQ)$ with the Bogomolov-Beauville (BB) form $q_X$ and $V'=NS(X)_\bbQ$, we again obtain \cite{Verb}, \cite{LL}:
$$\mathfrak{g}(V)=\mathfrak{g}_{tot}(X),\quad \mathfrak{g}(V')=\mathfrak{g}_{NS}(X)$$
In this case the extended lattice $(\tilde{H}^2(X,\bbQ),\tilde{q}_X)$ is called the rational Mukai lattice of $X$. It has the obvious integral structure
$$\Lambda =\bbZ e\oplus H^2(X,\bbZ)\oplus \bbZ \eta \subset \tilde{H}^2(X,\bbQ)$$
and we equip it with the Hodge structure of weight zero:
\begin{equation}\label{def of hdg str on tilde}\tilde{H}^2(X,\bbQ)=\bbQ e\oplus H^2(X,\bbQ (1))\oplus \bbQ \eta
\end{equation}
Denote by $O_{hdg}(\Lambda)$ the discrete group of Hodge isometries of $\Lambda$ and let  $\overline{O_{hdg}(\Lambda)}\subset O(\tilde{H}^2(X,\bbQ),\tilde{q}_X)$ be its Zariski closure.

\begin{lemma} \label{easy useful lemma} Let $X$ be a projective hyperkahler manifold.  Then we have the
equality of Lie subalgebras of $End(\tilde{H}^2(X,\bbQ))$
\begin{equation}\label{contains}
Lie(\overline{O_{hdg}(\Lambda)})= \mathfrak{g}_{NS}(X)
\end{equation}
\end{lemma}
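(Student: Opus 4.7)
The plan is to prove the two inclusions separately, using the orthogonal decomposition $\tilde{H}^2(X,\bbQ) = \tilde{NS(X)}_\bbQ \oplus T$ where $T := NS(X)_\bbQ^\perp$ is the transcendental lattice. By \eqref{def of hdg str on tilde}, $\tilde{NS(X)}_\bbQ$ coincides with the rational part of $\tilde{H}^{0,0}$ (since $e,\eta \in \tilde{H}^{0,0}$ and $NS(X)_\bbQ = H^{1,1}(X)\cap H^2(X,\bbQ)$), while $T$ is an orthogonal polarized sub-Hodge structure whose complexification has one-dimensional $(2,0)$ and $(0,2)$ components.

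For the inclusion $\mathfrak{g}_{NS}(X) \subseteq Lie(\overline{O_{hdg}(\Lambda)})$, I would observe that, since the Hodge structure on $\tilde{NS(X)}_\bbQ$ is trivial, any integral isometry of the sublattice $\bbZ e \oplus NS(X) \oplus \bbZ\eta$ extended by the identity on $T$ is automatically of Hodge type; a finite-index subgroup of such extensions preserves all of $\Lambda$ (it suffices to act trivially on the finite group $H^2(X,\bbZ)/(NS(X)\oplus (T\cap H^2(X,\bbZ)))$). The restriction $\tilde{q}_X|_{\tilde{NS(X)}_\bbQ}$ has indefinite signature $(2,\rho(X))$ (the BB form gives $(1,\rho-1)$ on $NS(X)_\bbQ$, and $\bbQ e \oplus \bbQ\eta$ is a hyperbolic plane), so by Borel's density theorem this arithmetic subgroup is Zariski dense in $O(\tilde{NS(X)}_\bbQ, \tilde{q}_X|)$. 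Passing to Lie algebras and using the identification $\mathfrak{g}_{NS}(X) = \mathfrak{so}(\tilde{NS(X)}_\bbQ, \tilde{q}_X|)$ from Section \ref{prelim on hyperk} gives the desired containment.

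For the reverse inclusion, every Hodge isometry preserves both $\tilde{NS(X)}_\bbQ$ and $T$, so the Zariski closure does as well, yielding
\[
Lie(\overline{O_{hdg}(\Lambda)}) \subseteq \mathfrak{so}(\tilde{NS(X)}_\bbQ,\tilde{q}_X|) \oplus \mathfrak{so}(T, q_X|_T).
\]
The first summand equals $\mathfrak{g}_{NS}(X)$, so it suffices to show the projection to $\mathfrak{so}(T)$ vanishes. The key fact is that the real Lie group $O_{hdg}(T\otimes\bbR)$ is compact: any such isometry acts on the one-dimensional $T^{2,0}$ by a scalar $\lambda$, and $\tilde{q}_X(v,\bar v) > 0$ for $0\neq v \in T^{2,0}$ forces $|\lambda|=1$; and by the Hodge index theorem $\tilde{q}_X$ is negative definite on $T^{1,1}_\bbR$ (all three positive directions of the BB form on $H^2(X,\bbR)$ lie in $(H^{2,0}\oplus H^{0,2})_\bbR$ together with a K\"ahler class of $NS(X)_\bbR$). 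Hence $O_{hdg}(T\otimes\bbR)$ embeds into the compact Lie group $U(1) \times O(b_2(X)-\rho(X)-2)$, and the image of $O_{hdg}(\Lambda)$ in $O(T)$ is a discrete subgroup of a compact Lie group, hence finite; its Zariski closure therefore contributes zero Lie algebra.

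The main obstacle is verifying the compactness of $O_{hdg}(T\otimes\bbR)$; this rests essentially on features of projective hyperkahler geometry, namely the one-dimensionality of $H^{2,0}$ and the fact that all three positive directions of the BB form sit inside $\tilde{NS(X)}_\bbQ$. Given this input, the remainder of the argument combines Borel density (for an indefinite form of signature $(2,\rho)$) with standard bookkeeping for orthogonal rational sub-Hodge structures.
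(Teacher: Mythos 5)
Your argument is correct and is essentially the paper's own proof: the paper likewise decomposes $\Lambda$ into $\Lambda'=\bbZ e\oplus NS(X)\oplus\bbZ\eta$ and the transcendental part $T(X)$, shows $O_{hdg}(T(X))$ is finite because it sits discretely in the compact group $O(W)\times O(W^\perp)$ with $W=H^2(X,\bbR)\cap(H^{2,0}\oplus H^{0,2})$, and deduces that the restriction map $O_{hdg}(\Lambda)\to O(\Lambda')$ has finite kernel and finite-index image, whence $Lie(\overline{O_{hdg}(\Lambda)})=\mathfrak{so}(\Lambda'_\bbQ)=\mathfrak{g}_{NS}(X)$. Your two-inclusion organization and explicit appeal to Borel density merely make explicit what the paper asserts in the single chain of equalities at the end.
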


\begin{proof} Recall that the signature of the BB form $q_X$ on $H^2(X,\bbZ)$ is $(3,b_2-3)$ and the signature of its restriction to $NS(X)$ is $(1,s)$ \cite{Huyb2}. Denote by $T(X)\subset H^2(X,\bbZ)$ the orthogonal complement of $NS(X)$ in $H^2(X,\bbZ)$. Then
$$H^2(X,\bbQ)=NS(X)_\bbQ \oplus T(X)_\bbQ$$
and the signature of the restriction of $q_X$ to $T(X)$ is $(2,b_2-3-s)$.
Moreover the Hodge structure on $H^2(X,\bbZ)$ restricts to one on $T(X)$.
Let $O_{hdg}(T(X))$ be the corresponding group of Hodge isometries.

First we claim that the group $O_{hdg}(T(X))$ is finite. We copy the argument from \cite[3.3.4]{Huyb3}: Consider the real space $T(X)_\bbR$ and its orthogonal decomposition $T(X)_\bbR =W\oplus W^\perp$
where $W=H^2(X,\bbR)\cap (H^{2,0}\oplus H^{0,2})$. The form $q$ is positive definite on $W$ and hence is negative definite on $W^\perp$.
The group $O_{hdg}(T(X))$ preserves $W$ and $W^\perp$ so it is contained in a finite subgroup of $O(W)\times O(W^\perp)$.

Define the sublattice
$$\Lambda ':=NS(X)\oplus \bbZ e\oplus \bbZ \eta \subset \Lambda$$
Then the sublattices $\Lambda '$ and $T(X)$ are preserved by the group $O_{hdg}(\Lambda)$ and $\Lambda '\oplus T(X)\subset \Lambda$ is a sublattice of full rank. Consider the restriction homomorphism
$$r:O_{hdg}(\Lambda)\to O(\Lambda ')$$
As explained above the kernel of $r$ is finite. It is also clear that the image of $r$ is a subgroup of finite index in $O(\Lambda ')$.  Therefore
$$Lie(\overline{O_{hdg}(\Lambda)})=Lie (\overline{O(\Lambda ')})=
\mathfrak{so}(\Lambda '_\bbQ)= \mathfrak{g}_{NS}(X)$$
\end{proof}

\begin{remark} In Lemma \ref{easy useful lemma} the hyperkahler manifold $X$ was used only through the associated Hodge structure on $H^2(X)$. So essentially it is a statement about K3-type Hodge structures.
\end{remark}

Later we will need the following fact.

\begin{lemma} \label{for future use} Let $\Gamma \subset O(\tilde{H}^2(X,\bbQ),\tilde{q}_X)$ be a {\bf discrete} subgroup of Hodge isometries, and  let $\overline{\Gamma }\subset O(\tilde{H}^2(X,\bbQ),\tilde{q}_X)$ be its Zariski closure. Then
$$Lie \overline{\Gamma }\subset \mathfrak{g}_{NS}(X)$$
\end{lemma}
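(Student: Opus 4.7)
The plan is to project to the transcendental part $T(X)_\bbQ$ and reduce to the proof of Lemma~\ref{easy useful lemma}. Since every $\gamma\in\Gamma$ is a Hodge isometry of the weight-zero rational Hodge structure \eqref{def of hdg str on tilde}, it preserves the rational $(0,0)$-subspace $\Lambda'_\bbQ:=\bbQ e\oplus NS(X)_\bbQ\oplus\bbQ\eta$ together with its $\tilde q_X$-orthogonal complement $T(X)_\bbQ$. The $\bbQ$-algebraic subgroup $\overline\Gamma$ inherits these invariances, so restriction to $T(X)_\bbQ$ produces a $\bbQ$-algebraic homomorphism $\pi\colon\overline\Gamma\to O(T(X)_\bbQ,\,q_X|_{T(X)})$.

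Next I would show that the image $\pi(\Gamma)$ is finite. The argument recalled in the proof of Lemma~\ref{easy useful lemma} gives that the group of real Hodge isometries of $T(X)_\bbR$ is compact: the decomposition $T(X)_\bbR=W\oplus W^\perp$, with $W=H^2(X,\bbR)\cap(H^{2,0}\oplus H^{0,2})$, is preserved by every real Hodge isometry, and the BB form is positive, respectively negative, definite on the two summands. Because $\Gamma$ is discrete---in particular preserves some lattice $L\subset\tilde H^2(X,\bbQ)$, whose image $\pi(L)\subset T(X)_\bbQ$ is then a $\pi(\Gamma)$-invariant lattice---the image $\pi(\Gamma)$ is a discrete subgroup of the compact Lie group $O_{hdg}(T(X))_\bbR$, hence finite.

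Finiteness of $\pi(\Gamma)$ implies finiteness of its Zariski closure $\pi(\overline\Gamma)$, so $Lie\,\pi(\overline\Gamma)=0$. Consequently $Lie\,\overline\Gamma$ lies in the kernel of $d\pi$, i.e.\ in the Lie subalgebra of $\mathfrak{so}(\tilde H^2(X,\bbQ),\tilde q_X)$ consisting of operators that annihilate $T(X)_\bbQ$. This kernel is precisely $\mathfrak{so}(\Lambda'_\bbQ,\tilde q_X|_{\Lambda'})$, which in the concluding identification of the proof of Lemma~\ref{easy useful lemma} was shown to coincide with $\mathfrak{g}_{NS}(X)$; this yields the desired inclusion.

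The delicate point is the finiteness of $\pi(\Gamma)$. Abstract topological discreteness of $\Gamma$ in $GL(\tilde H^2(X,\bbR))$ need not survive projection onto the compact factor $O_{hdg}(T(X))_\bbR$ of the real Hodge isometry group $O(\Lambda')_\bbR\times O_{hdg}(T(X))_\bbR$; accordingly one leans on the stronger lattice-preservation form of discreteness, which is exactly what holds in our intended applications---when $\Gamma$ arises from $G^{eq}(X)$, the preservation of the image of $v_{top}$ recalled in Section~\ref{notation} supplies the required invariant lattice.
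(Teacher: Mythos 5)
Your overall strategy is the same as the paper's: decompose $\tilde{H}^2(X,\bbQ)=\Lambda'_\bbQ\oplus T(X)_\bbQ$ as rational Hodge structures, note that Hodge isometries respect it, and exploit the compactness of the real Hodge isometries of $T(X)_\bbR$ coming from $W\oplus W^\perp$. The gap is in the one step that carries all the weight: you need $\pi(\Gamma)$ to be finite, and you derive this from the assertion that a discrete subgroup of $GL(\tilde{H}^2(X,\bbR))$ with rational matrix entries ``in particular preserves some lattice $L\subset\tilde{H}^2(X,\bbQ)$.'' That implication is false: the cyclic group generated by $\mathrm{diag}(2,1/2)$ is discrete and rational, lies in the orthogonal group of the indefinite form $xy$, and stabilizes no lattice (any nonzero vector acquires unbounded denominators under iteration). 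Nothing in the Hodge-isometry hypothesis rules this out, since on the $(0,0)$-part $\Lambda'_\bbQ$ the form $\tilde q_X$ is indefinite and the Hodge condition imposes no constraint. So, as a proof of the lemma under the stated hypothesis of bare topological discreteness, the argument does not close; your final paragraph concedes as much and in effect replaces ``discrete'' by ``preserves a lattice,'' i.e.\ you prove a variant of the lemma rather than the lemma itself. (That variant does suffice for the application in Theorem \ref{half of conj for hyperkahler}, where the group in question preserves the image of $v_{top}$, but it is a strictly stronger hypothesis.)

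It is worth contrasting this with what the paper actually does. The paper's proof establishes that the \emph{intersection} $\Gamma\cap O_{hdg}(T(X)_\bbQ)$ is finite -- and that claim genuinely needs only topological discreteness, since a discrete subgroup of the compact group of real Hodge isometries of $T(X)_\bbR$ is finite -- and then concludes as in Lemma \ref{easy useful lemma}. Your instinct to control the \emph{projection} $\pi(\Gamma)$ instead is sound: finiteness of $\pi(\overline\Gamma)$ is precisely what forces $Lie\,\overline\Gamma$ into $\ker d\pi=\mathfrak{so}(\Lambda'_\bbQ)=\mathfrak{g}_{NS}(X)$, whereas finiteness of the intersection only makes the other projection injective on $Lie\,\overline\Gamma$. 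But under the bare discreteness hypothesis you have not supplied an argument that this projection is finite, and the lattice-preservation shortcut you invoke is not available. The clean repair is the one you gesture at: add to the hypotheses that $\Gamma$ preserves a full-rank lattice in $\tilde{H}^2(X,\bbQ)$ (as it does in every application in the paper), after which $\pi(\Gamma)$ is a discrete subgroup of a compact group, hence finite, and the rest of your argument is complete and correct.
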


\begin{proof} Similar to the proof of Lemma \ref{easy useful lemma}. Namely, in the above notation consider the orthogonal decomposition of rational Hodge structures
\begin{equation} \label{decomp}\tilde{H}^2(X,\bbQ)=\Lambda '_\bbQ \oplus
T(X)_\bbQ
\end{equation}
This decomposition is preserved by the group $O_{hdg}(\tilde{H}^2(X,\bbQ),\tilde{q}_X)$, and so
$$O_{hdg}(\tilde{H}^2(X,\bbQ),\tilde{q}_X)=O(\Lambda '_\bbQ)\times O_{hdg}(T(X)_\bbQ)$$
As in the proof of Lemma \ref{easy useful lemma} we conclude that the group   $\Gamma \cap O_{hdg}(T(X)_\bbQ)$ is finite. Therefore
$$Lie \overline{\Gamma }\subset Lie (\overline{O(\Lambda ')})=
\mathfrak{so}(\Lambda '_\bbQ)= \mathfrak{g}_{NS}(X)$$
\end{proof}

\begin{cor}\label{cor conj for k3 surf} Conjecture \ref{conj a} holds for projective K3 surfaces.
\end{cor}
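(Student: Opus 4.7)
The plan is to deduce the corollary from the two preceding lemmas by identifying the total cohomology of a K3 surface with the extended Mukai lattice discussed in Section~3, and then invoking the classification of autoequivalences of $D^b(X)$ for a K3 surface.

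First I would observe that for a projective K3 surface $X$, the total rational cohomology $H^\bullet(X,\bbQ) = H^0\oplus H^2\oplus H^4$, equipped with the Mukai pairing, is canonically isomorphic (as a quadratic space carrying a Hodge structure of weight $0$ after Tate twist) to the rational Mukai lattice $\tilde{H}^2(X,\bbQ) = \bbQ e\oplus H^2(X,\bbQ(1))\oplus \bbQ\eta$ of Subsection~\ref{prelim on hyperk}. Under this identification, the group $G^{eq}(X)\subset GL(H(X,\bbQ))$ acts by Hodge isometries of the extended Mukai lattice, and as recalled in Subsection~2.2, $G^{eq}(X)$ preserves a full-rank integral lattice (the image of $v_{top}$), hence is a \emph{discrete} subgroup of $O(\tilde{H}^2(X,\bbQ),\tilde{q}_X)$.

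For the inclusion $L^{eq}(X)\subseteq \mathfrak{g}_{NS}(X)$ I would apply Lemma~\ref{for future use} directly to $\Gamma := G^{eq}(X)$: its Zariski closure $\overline{G^{eq}(X)}$ has Lie algebra contained in $\mathfrak{g}_{NS}(X)$. For the reverse inclusion $\mathfrak{g}_{NS}(X)\subseteq L^{eq}(X)$, the key input is Orlov's derived Torelli theorem for K3 surfaces together with the theorem of Huybrechts--Macr\`\i--Stellari (and Bayer--Bridgeland) identifying the image of $\rho_X\colon \Aut D^b(X)\to O_{hdg}(\Lambda)$ with the index-two subgroup of orientation-preserving Hodge isometries of the Mukai lattice. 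Consequently $G^{eq}(X)$ has finite index in $O_{hdg}(\Lambda)$, so
\[
\overline{G^{eq}(X)} \;=\; \overline{O_{hdg}(\Lambda)},
\]
and Lemma~\ref{easy useful lemma} gives
\[
L^{eq}(X) \;=\; Lie\bigl(\overline{O_{hdg}(\Lambda)}\bigr) \;=\; \mathfrak{g}_{NS}(X).
\]

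The substantive input in this argument is not internal to the paper; it is the classification of derived autoequivalences of a K3 surface (Orlov, together with the orientation-preservation result of Huybrechts--Macr\`\i--Stellari / Bayer--Bridgeland), which guarantees that $G^{eq}(X)$ is a \emph{finite-index} subgroup of $O_{hdg}(\Lambda)$. Everything else is essentially bookkeeping: the upper bound on $L^{eq}(X)$ is supplied by Lemma~\ref{for future use}, and the computation of the Lie algebra of $\overline{O_{hdg}(\Lambda)}$ is supplied by Lemma~\ref{easy useful lemma}; neither requires new work.
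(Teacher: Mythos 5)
Your proposal is correct and follows essentially the same route as the paper: the paper's proof likewise identifies $H^\bullet(X,\bbQ)$ with the Mukai lattice, cites \cite[Ch.~10]{Huyb} for the fact that $G^{eq}(X)$ is a discrete subgroup of $O_{hdg}(\tilde{H}^2(X,\bbZ))$ of index at most $2$, and concludes by Lemma~\ref{easy useful lemma}. Your separate appeal to Lemma~\ref{for future use} for the inclusion $L^{eq}(X)\subseteq\mathfrak{g}_{NS}(X)$ is harmless but redundant once the finite-index statement is in hand (note only that finite index gives equality of the \emph{identity components} of the Zariski closures, hence of the Lie algebras, which is all that is needed).
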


\begin{proof} Let $X$ be a projective K3 surface. Then $G^{eq}$ is a discrete subgroup of $GL(\tilde{H}^2(X,\bbZ),\tilde{q}_X)$ \cite[Ch.10]{Huyb}. Moreover $G^{eq}$ is a subgroup of $O_{hdg}(\tilde{H}^2(X,\bbZ))$ and its index is at most 2 \cite[Ch.10]{Huyb}. It remains to apply Lemma \ref{easy useful lemma}.
\end{proof}

\subsection{Towards a proof of Conjecture \ref{conj a} for hyperkahler manifolds}

The following theorem establishes one inclusion in the weak form of Conjecture \ref{conj a} (see Remark \ref{weak conj}).

\begin{theorem} \label{half of conj for hyperkahler} Let $X$ be a hyperkahler manifold of dimension $2d$. The action of the Lie algebra $L^{eq}(X)$ on $H(X,\bbQ)$ preserves the subspace $\mathfrak{g}_{NS}(X)\cdot 1$. Moreover we have the inclusion of rational Lie subalgebras of $End(\mathfrak{g}_{NS}(X)\cdot 1)$:
\begin{equation}
\label{inclusion of lie}L^{eq}(X)\vert _{\mathfrak{g}_{NS}(X)\cdot 1}\subset \mathfrak{g}_{NS}(X).
\end{equation}
\end{theorem}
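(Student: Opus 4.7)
The strategy is to reduce the assertion, which concerns the full cohomology $H(X,\bbQ)$, to an assertion about the extended Mukai Hodge structure on $\tilde{H}^2(X,\bbQ)$, where Lemma \ref{for future use} applies directly.

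First I would invoke Taelman's theorem \cite{Tael}, extracting from it two ingredients. (a) There is a morphism of algebraic $\bbQ$-groups $\phi\colon \overline{G^{eq}(X)}\to O(\tilde{H}^2(X,\bbQ),\tilde{q}_X)$ whose image on the discrete subgroup $G^{eq}(X)$ consists of Hodge isometries for the Hodge structure \eqref{def of hdg str on tilde}. (b) Under the LLV identification $\mathfrak{g}_{tot}(X)\cong \mathfrak{so}(\tilde{H}^2(X,\bbQ),\tilde{q}_X)$ from Section \ref{prelim on hyperk}, the restriction to the Verbitsky subalgebra $SH(X)\subset H(X,\bbQ)$ (the subalgebra generated by $H^2(X,\bbQ)$) of the action of $\overline{G^{eq}(X)}$ is the pullback along $\phi$ of the natural representation of the orthogonal group on $SH(X)$.

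Second, I would observe that $\mathfrak{g}_{NS}(X)\cdot 1\subseteq SH(X)$. By the identification $\mathfrak{g}_{NS}(X)=\mathfrak{so}(\Lambda'_\bbQ)$ of Section \ref{prelim on hyperk}, the generators of $\mathfrak{g}_{NS}(X)$ are the Lefschetz pairs $(e_\kappa,f_\kappa)$ for $\kappa\in NS(X)_\bbQ\subset H^2(X,\bbQ)$, hence preserve $SH(X)$; applied to $1\in H^0(X,\bbQ)\subset SH(X)$ they land in $SH(X)$. Note also that $\mathfrak{g}_{NS}(X)\cdot 1$ is by construction a $\mathfrak{g}_{NS}(X)$-invariant subspace of $SH(X)$.

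Third, I would apply Lemma \ref{for future use} to the discrete subgroup $\Gamma:=\phi(G^{eq}(X))$ of Hodge isometries of $\tilde{H}^2(X,\bbQ)$ (discreteness of $\Gamma$ follows from the discreteness of $G^{eq}(X)\subset GL(H(X,\bbQ))$ and the continuity of $\phi$). The lemma yields $\Lie\overline{\Gamma}\subseteq \mathfrak{g}_{NS}(X)$. Since $\phi$ is algebraic, $\phi(\overline{G^{eq}(X)})=\overline{\phi(G^{eq}(X))}$, so $d\phi(L^{eq}(X))\subseteq\mathfrak{g}_{NS}(X)\subset\mathfrak{so}(\tilde{H}^2(X,\bbQ))$. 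Combined with ingredient (b) of step one, this means that the action of $L^{eq}(X)$ on $SH(X)$ agrees with the action of the subalgebra $d\phi(L^{eq}(X))\subseteq\mathfrak{g}_{NS}(X)$ through the canonical representation. Since $\mathfrak{g}_{NS}(X)\cdot 1$ is $\mathfrak{g}_{NS}(X)$-invariant, it is in particular preserved by $L^{eq}(X)$, and the restriction of $L^{eq}(X)$ to $\mathfrak{g}_{NS}(X)\cdot 1$ is a subspace of $\mathfrak{g}_{NS}(X)|_{\mathfrak{g}_{NS}(X)\cdot 1}$, proving \eqref{inclusion of lie}.

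The main obstacle is not the argument above, which is essentially formal given the right input, but rather checking that the statements attributed to \cite{Tael} in step one are available in the precise form needed: specifically, that $\phi$ is defined as a morphism of algebraic $\bbQ$-groups (so that it commutes with passage to Zariski closures) and that the action of $\overline{G^{eq}(X)}$ on the Verbitsky component factors through $\phi$ via the canonical orthogonal representation. If only the action of the discrete group $G^{eq}(X)$ is controlled in \cite{Tael}, an additional density/algebraicity step is required to propagate the factorisation to the Zariski closure.
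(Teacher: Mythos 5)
Your overall route is the same as the paper's: transport the problem to the extended Mukai lattice $\tilde{H}^2(X,\bbQ)$ and invoke Lemma \ref{for future use} for a discrete group of Hodge isometries. The difficulty is that your step one is not a citation but is precisely the content of the paper's proof, and you have underestimated what it takes. From the form of Taelman's results actually used here (Theorem \ref{thm from tael}: $Ad_{\Phi^H}$ preserves $\mathfrak{g}_{tot}(X)$ and $\Phi^H$ preserves $\mathfrak{g}_{tot}(X)\cdot 1$) one does \emph{not} directly obtain a homomorphism $\phi\colon \overline{G^{eq}(X)}\to O(\tilde{H}^2(X,\bbQ),\tilde{q}_X)$ landing in Hodge isometries. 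What one gets is only a map $\alpha$ from $G^{eq}(X)\vert_{\mathfrak{g}_{tot}(X)\cdot 1}$ to $Aut(\mathfrak{g}_{tot}(X))$, i.e.\ essentially to the \emph{adjoint} (projective orthogonal) group, with kernel $\pm 1$; there is no given lift of this to $O(\tilde{H}^2)$, let alone one defined on the Zariski closure as a morphism of algebraic $\bbQ$-groups.

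The paper bridges this in several steps, each of which your proposal silently assumes: (i) pass to a finite-index subgroup $P$ whose image under $\alpha$ lies in the adjoint group $Ad(\mathfrak{g}_{tot}(X))$ (using simplicity of $\mathfrak{g}_{tot}$); (ii) pull $P$ back through the finite-kernel covers $\beta\colon G_{tot}(X)\to Ad(\mathfrak{g}_{tot}(X))$ and $\theta\colon \tilde{G}_{tot}(X)\to G_{tot}(X)$ (the latter coming from Taelman's isometric embedding $\Psi\colon \mathfrak{g}_{tot}\cdot 1\hookrightarrow \Sym^d\tilde{H}^2(X,\bbQ)$) to produce discrete subgroups $A$ and $B$ with $Lie\overline{A}=L^{eq}(X)\vert_{\mathfrak{g}_{tot}\cdot 1}$ and $Lie\overline{A}=Lie\overline{B}$; and (iii) re-verify that $B$ still acts by \emph{Hodge} isometries of $\tilde{H}^2(X,\bbQ)$ after lifting — this is not automatic, since $\theta(b)$ commuting with the Hodge structure on $\Psi(\mathfrak{g}_{tot}\cdot 1)$ only determines $b$'s commutator with $h(S^1)$ up to the finite group $\ker\theta$, and one needs connectedness of $S^1$ to kill that ambiguity. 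Only then does Lemma \ref{for future use} apply. So the argument you call "essentially formal given the right input" is correct in outline, but the "right input" is not available off the shelf; producing it (with all the finite-index and finite-cover bookkeeping, and the Hodge-isometry check after lifting) is the actual proof.
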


\begin{proof} We will use some results of \cite{Tael}.

\begin{theorem} \cite[Thm.A,B]{Tael}\label{thm from tael} Let $\Phi :D(X)\to D(X)$ be an autoeqvivalence and $\Phi ^H\in GL(H(X,\bbQ)$ the corresponding operator on the cohomology. Then the following holds.

(1) The operator $Ad_{\Phi ^H} $ preserves the Lie subalgebra $\mathfrak{g}_{tot}(X)\subset End (H(X,\bbQ))$.

(2) $\Phi ^H$ preserves the irreducible $\mathfrak{g}_{tot}(X)$-submodule $\mathfrak{g}_{tot}(X)\cdot 1\subset H(X,\bbQ)$.
\end{theorem}

It follows from part (2) of Theorem \ref{thm from tael} that there is
 a group homomorphism $G^{eq}(X)\to GL(\mathfrak{g}_{tot}(X)\cdot 1)$. Denote by $G^{eq}(X)\vert_{\mathfrak{g}_{tot}(X)\cdot 1}$ its image.

The subspace $\mathfrak{g}_{tot}(X)\cdot 1\subset H^{even} (X,\bbQ)$ inherits the Hodge structure of weight zero, given by
$$H^{even} (X,\bbQ)=\bigoplus _sH^{2s}(X,\bbQ (s))$$
The group $G^{eq}(X)\vert_{\mathfrak{g}_{tot}(X)\cdot 1}$ is a discrete group of Hodge isometries of $\mathfrak{g}_{tot}(X)\cdot 1$.

Theorem \ref{thm from tael} implies that the conjugation action of the group
$G^{eq}(X)\vert_{\mathfrak{g}_{tot}(X)\cdot 1}$ gives the group homomorphism
$$\alpha :G^{eq}(X)\vert_{\mathfrak{g}_{tot}(X)\cdot 1}\to Aut(\mathfrak{g}_{tot}(X))$$
and an element $g\in \ker(\alpha)$ is an automorphism of the simple $\mathfrak{g}_{tot}(X)$-module $\mathfrak{g}_{tot}(X)\cdot 1$. Hence $g$ is a scalar operator on $\mathfrak{g}_{tot}(X)\cdot 1$. But $g$  is an isometry, so $g=\pm 1$.

The Lie algebra $\mathfrak{g}_{tot}(X)$ is simple, so $G^{eq}(X)\vert_{\mathfrak{g}_{tot}(X)\cdot 1}$ has a subgroup $P$ of finite index whose image under $\alpha $ is contained in the
 adjoint group $Ad(\mathfrak{g}_{tot}(X))$ of the Lie algebra $\mathfrak{g}_{tot}(X)$.

Let $G_{tot}(X)\subset Gl(\mathfrak{g}_{tot}(X)\cdot 1)$ be the connected Lie subgroup with the Lie algebra $\mathfrak{g}_{tot}(X)$. The adjoint surjective  homomorphism $\beta :G_{tot}(X)\to Ad(\mathfrak{g}_{tot}(X))$ has a finite kernel. Put $A:=\beta ^{-1}(\alpha (P))\subset G_{tot}(X)$. So we have the diagram
$$\begin{array}{rcl} P & \stackrel{\alpha}{\to} & Ad(\mathfrak{g}_{tot}(X))\\
 & & \uparrow \beta \\
A= \beta ^{-1}(\alpha (P)) & \hookrightarrow & G_{tot}(X)
\end{array}
$$
The group $P$ is a discrete group of Hodge isometries of $\mathfrak{g}_{tot}\cdot 1$.
If $p\in P,a\in A$ are such that $\alpha (p)=\beta (a)$, then $pa^{-1}$ acts as a scalar on $\mathfrak{g}_{tot}\cdot 1$. It follows that $A$ is a discrete subgroup of $G_{tot}(X)$ which acts by Hodge isometries on $\mathfrak{g}_{tot}\cdot 1$ and we have the equality of Lie subalgebras of $End (\mathfrak{g}_{tot}\cdot 1)$:
$$Lie\overline{A}=L^{eq}(X)\vert_{\mathfrak{g}_{tot}(X)\cdot 1}$$
For the proof of the theorem it suffices to establish the inclusion of Lie subalgebras of $End(\mathfrak{g}_{tot}\cdot 1)$:
\begin{equation}\label{inclusion of lie alg} Lie\overline{A}\subset \mathfrak{g}_{NS}(X)
\end{equation}

Recall a lemma from \cite{Tael}.

\begin{lemma} \label{lemma on connection} Let $\dim X=2d$. Then there exists a unique map
$$\Psi :\mathfrak{g}_{tot}\cdot 1\to \Sym ^d\tilde{H}^2(X,\bbQ)$$
with the following properties.

(1) $\Psi (1)=e^d/d!$

(2) $\Psi $ is a morphism of $\mathfrak{g}_{tot}$-modules.

This map is an injective isometry and a morphism of Hodge structures \eqref{def of hdg str on tilde}.
 \end{lemma}

\begin{proof} See \cite[Prop. 3.5, 3.7, Lemma 4.6]{Tael}.
\end{proof}

Denote by $\tilde{G}_{tot}(X)\subset GL(\tilde{H}^2(X,\bbQ))$
the connected algebraic subgroup with the Lie algebra $\mathfrak{g}_{tot}(X)\subset End(\tilde{H}^2(X,\bbQ))$. The group $\tilde{G}_{tot}(X)$ acts naturally on the space $\Sym ^d\tilde{H}^2(X,\bbQ)$, and the restriction to the subspace $\Psi (\mathfrak{g}_{tot}\cdot 1)$ gives (by Lemma \ref{lemma on connection}) a surjective group homomorphism with finite kernel
$$\theta :\tilde{G}_{tot}(X)\to G_{tot}(X)$$
Put $B=\theta ^{-1}(A)\subset \tilde{G}_{tot}(X)$. This is a discrete sugbroup of isometries of $\tilde{H}^2(X,\bbQ)$. We claim that it also preserves the Hodge structure. Indeed, the Hodge structure on $\tilde{H}^2(X,\bbQ)$ is given by a group homomorphism $h:S^1\to SO(H^2(X,\bbR))\subset \tilde{G}_{tot}(X)(\bbR)$. The induced  Hodge structure on the subspace $\Psi (\mathfrak{g}_{tot}(X)\cdot 1)\subset
\Sym ^d\tilde{H}^2(X,\bbQ)$ is given as the
composition
$$S^1\stackrel{h}{\to } \tilde{G}_{tot}(X)(\bbR)\stackrel{\theta}{\to} G_{tot}(X)(\bbR)\subset
Gl(\Psi(\mathfrak{g}_{tot}\cdot 1),\bbR)$$
For every $b\in B$, the element $\theta (b)\in A$ commutes with the Hodge structure on $\Psi(\mathfrak{g}_{tot}\cdot 1)$ (since $\Psi$ is a morphism of Hodge structures). It follows that $b$ commutes with the Hodge structure on $\tilde{H}^2(X,\bbQ)$ up to an element in the kernel of $\theta$, which is a finite group. But the group $S^1$ is connected, hence $b$ and the image of $S^1$ commute.

We conclude that $B$ is a discrete group of Hodge isometries of $\tilde{H}^2(X,\bbQ)$.
Lemma \ref{for future use} implies that
$$Lie\overline{B}\subset \mathfrak{g}_{NS}(X)$$
But then
$$Lie\overline{A}=Lie\overline{B}\subset \mathfrak{g}_{NS}(X)$$
which proves Theorem \ref{half of conj for hyperkahler}.
\end{proof}

\section{Conjecture \ref{subs main conj} for dual families of K3 surfaces}
We define the notion of mirror symmetric families of lattice polarized families of K3 surfaces following the work of Dolgachev-Nikulin \cite{Dolg} and Pinkham \cite{Pink}. Then we prove Conjecture \ref{subs main conj} for such families (Theorem \ref{main thm for k3}).

Let us recall the notion of a lattice polarized K3 surface and their moduli spaces following \cite{Dolg}. First we review the classical theory of moduli space of K3 surfaces \cite{K3}. Let $L$ be the even unimodular lattice of signature $(3,19)$ which is the direct sum
$$L=(-E_8)^{\oplus 2}\oplus (U)^{\oplus 3}$$
Recall that for any K3 surface $X$ the lattice $H^2(X,\bbZ)$ is isomorphic to $L$. Unless stated otherwise we consider K3 surfaces which are not necessarily algebraic.

\begin{defn} A {\bfseries marked} K3 surface $(X,u)$ is a K3 surface $X$ with an isomorphism of lattices $u:H^2(X,\bbZ)\to L$. Marked surfaces $(X,u)$ and $(X',u')$ are isomorphic if there exists an isomorphism $f:X\to X'$ such that
$u'=u\cdot f^*$.
\end{defn}

The following theorem is proved in \cite[Exp. XIII]{K3}.

\begin{thm} \label{general moduli space} There exists a fine moduli space $\mathfrak{M}$ of marked K3 surfaces.
\end{thm}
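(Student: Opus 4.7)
The plan is to construct $\mathfrak{M}$ by gluing together the Kuranishi families of individual marked K3 surfaces; the marking provides the rigidification needed for a fine, as opposed to coarse, moduli space.

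First I would invoke the unobstructedness of K3 deformations. For any K3 surface $X$ one has $\dim H^1(X,T_X)=20$ and $H^2(X,T_X)=0$, so by Bogomolov--Tian--Todorov the Kuranishi family $\pi\colon\cX\to B$ is smooth of dimension $20$ over a polydisc $B$, and every small deformation of $X$ is pulled back from it. Because $B$ is simply connected the local system $R^2\pi_*\bbZ$ is constant on $B$, so a marking $u\colon H^2(X,\bbZ)\isomoto L$ of the central fibre extends uniquely to a marking of the entire family $\cX/B$. In this way every K3 surface sits as the central fibre of a canonical Kuranishi family of \emph{marked} K3 surfaces over a $20$-dimensional polydisc.

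The crucial rigidification step is that a marked K3 surface admits no nontrivial automorphism compatible with the marking: by the Burns--Rapoport strong Torelli theorem, an automorphism of a K3 surface that acts as the identity on $H^2(X,\bbZ)$ must itself be the identity. Consequently, whenever two Kuranishi families of marked K3 surfaces have isomorphic marked central fibres, they glue along a \emph{unique} analytic isomorphism over a common open neighbourhood of the base point. Gluing all Kuranishi families via these unique isomorphisms produces a smooth complex analytic space $\mathfrak{M}$ of dimension $20$, together with a tautological family of marked K3 surfaces $\cU\to\mathfrak{M}$. The universal property is then immediate: given any family of marked K3 surfaces $\cX\to S$, local versality of the Kuranishi construction yields a classifying map $S\to\mathfrak{M}$ near each point of $S$, and uniqueness of these local maps (by the rigidification above) lets them glue to a unique global morphism $S\to\mathfrak{M}$ pulling $\cU$ back to $\cX$.

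The main obstacle is that $\mathfrak{M}$ fails to be Hausdorff: two non-isomorphic marked K3 surfaces may be limits of the same one-parameter family of marked K3 surfaces, the standard witness being the flop along a smooth rational curve of self-intersection $-2$. One must therefore resist the temptation to impose separatedness; however, this non-separation is not an obstruction to representing the moduli functor, since the universal property is local on the source $S$ and the gluing above produces a genuine (albeit non-Hausdorff) complex analytic space with the required universal family.
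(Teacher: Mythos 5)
The paper does not prove this theorem at all: it simply cites \cite[Exp.\ XIII]{K3}, and your sketch reproduces the standard construction carried out there (universal Kuranishi deformations, which exist since $H^0(X,T_X)=H^2(X,T_X)=0$, rigidified and glued via markings using the fact that an automorphism acting trivially on $H^2(X,\bbZ)$ is the identity, at the cost of non-Hausdorffness). Your argument is essentially correct and is the same approach as the cited source.
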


The moduli space $\mathfrak{M}$ is a non-separated analytic space. By definition it comes with the universal family $f:\cU \to \mathfrak{M}$ of marked K3 surfaces.
The orthogonal group $\Gamma =O(L)$ acts naturally of $\mathfrak{M}$ by changing the marking $\gamma \cdot (X,u)=(X,\gamma \cdot u)$ and the quotient $\mathfrak{M}/\Gamma$ is the set of isomorphism classes of K3 surfaces, i.e. $\mathfrak{M}/\Gamma$ is the coarse moduli space of K3 surfaces. However, the action of $\Gamma $ on $\mathfrak{M}$ is not proper (because the stabilizer of a point $(X,u)$ is isomorphic to the automorphism group of $X$, which may be infinite) and there is no reasonable analytic structure on the set $\mathfrak{M}/\Gamma$.

The space $\mathfrak{M}$ has two connected components which are interchanges by the involution $(X,u)\mapsto (X,-u)$. Choose one of these components $\mathfrak{M}^0$ and let $\Gamma ^0\subset \Gamma $ be its stabilizer (a subgroup of index 2). Clearly $\mathfrak{M}^0/\Gamma ^0=\mathfrak{M}/\Gamma $. We denote by $f^0:\cU ^0\to \mathfrak{M}^0$ the restriction of the universal family $f:\cU \to \mathfrak{M}$.

Given a marked K3 surface $(X,u)$, the image of the line $H^{2,0}(X)$ under the map $u_\bbC :H^2(X,\bbC) \to L_\bbC =\bbC ^{22}$ defines a point in the corresponding projective space $\bbP (L_\bbC)=\bbP ^{21}$. This point lies in the {\it period domain} $\Omega \subset \bbP ^{21}$ consisting of points $\{\omega \in \bbP ^{21}\ \vert \ (\omega ,\omega )=0,\ (\omega ,\overline{\omega})>0\}$ and so one gets the {\it period map}
\begin{equation}\label{period map}
P:\mathfrak{M}\to \Omega
\end{equation}
It is known that the map $P$ is holomorphic, etale and surjective
\cite[Ex. XIII]{K3}. Its restriction to $\mathfrak{M}^0$ is also surjective.

\begin{lemma} \label{eigenspace for g} Let $g\in O(L)$, $g\neq \pm1$. The collection of marked K3 surfaces
$$E_g:=\{(X,u) \vert \ \text{$u_\bbC (H^{2,0})$ is an eigenvector of $g_\bbC$}\}$$
is contained in a proper analytic subspace of $\mathfrak{M}$.
\end{lemma}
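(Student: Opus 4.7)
The plan is to transport the problem from the moduli space $\mathfrak{M}$ to the period domain $\Omega \subset \bbP(L_\bbC) = \bbP^{21}$ via the period map $P$ in \eqref{period map}. By construction, $P(X,u) = [u_\bbC(H^{2,0}(X))]$, so the condition that $u_\bbC(H^{2,0})$ be an eigenvector of $g_\bbC$ depends only on the line $P(X,u)$. Decomposing $L_\bbC = \bigoplus_{\lambda} V_\lambda$ into (finitely many) eigenspaces of $g_\bbC$, the locus of such lines in $\bbP(L_\bbC)$ is the finite union $\bigcup_{\lambda} \bbP(V_\lambda)$, and I set
\[
\Omega_g \;:=\; \Omega \cap \bigcup_{\lambda} \bbP(V_\lambda),
\]
which is an analytic subset of $\Omega$. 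By definition $E_g = P^{-1}(\Omega_g)$, and since $P$ is holomorphic this is automatically an analytic subset of $\mathfrak{M}$.

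The substantive step is to show that $\Omega_g$ is a \emph{proper} analytic subset of $\Omega$; the rest is then formal, because $P$ is surjective (even on $\mathfrak{M}^0$), so $P^{-1}(\Omega_g) \subsetneq \mathfrak{M}$. For this I argue by contradiction: suppose $\Omega \subseteq \bigcup_\lambda \bbP(V_\lambda)$. Let $Q \subset \bbP(L_\bbC)$ be the smooth quadric cut out by $(\omega,\omega)=0$. Because the symmetric form on $L$ is nondegenerate and $\dim L_\bbC = 22$, the quadric $Q$ is irreducible of dimension $20$. Now $\Omega$ is a nonempty open (real-analytic) subset of $Q$ in the Euclidean topology, so its complex Zariski closure in $Q$ is all of $Q$; this is where the openness of the condition $(\omega,\bar\omega)>0$ plays its role.

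Consequently $Q \subseteq \bigcup_\lambda \bbP(V_\lambda)$, and irreducibility of $Q$ forces $Q \subseteq \bbP(V_\lambda)$ for some single $\lambda$. But $\bbP(V_\lambda) \cap Q$ is a proper closed subvariety of $Q$ whenever $V_\lambda \subsetneq L_\bbC$, so the only possibility is $V_\lambda = L_\bbC$, i.e.\ $g_\bbC = \lambda \cdot \Id$. Since $g$ is an isometry, $\lambda^2 = 1$ and so $g = \pm 1$, contradicting the hypothesis. Therefore $\Omega_g \subsetneq \Omega$, and thus $E_g = P^{-1}(\Omega_g)$ is contained in a proper analytic subspace of $\mathfrak{M}$, as required.

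The main obstacle is the density step: asserting that a real-analytic open subset of the complex quadric $Q$ is Zariski-dense. This is true because any proper complex analytic subset of an irreducible complex manifold has real codimension at least two and therefore cannot contain any Euclidean-open subset; once that fact is in hand, the irreducibility of $Q$ and the finiteness of the collection of eigenspaces of $g_\bbC$ do the rest. Everything else—analyticity of $E_g$, surjectivity of $P$, and the scalar-isometry dichotomy—is routine.
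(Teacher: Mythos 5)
Your proof is correct and follows essentially the same route as the paper: push the locus into the period domain via $P$, observe that the eigenvectors of the non-scalar operator $g_\bbC$ lie in a finite union of proper linear subspaces, and note that $\Omega$, being open in a nondegenerate quadric, cannot be contained in such a union. You simply spell out the Zariski-density and irreducibility details that the paper leaves implicit.
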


\begin{proof} It suffices to prove that the image $P(E_g)$ is contained in a proper analytic subspace of the period domain $\Omega$. Our assumption on $g$ means that it is not a scalar operator. Thus eigenvectors of $g_\bbC$ are contained in a union of proper linear subspaces of $L_\bbC$. But the period domain $\Omega$, being an open subset of a nondegenerate quadric, is not contained in any hyperplane in $\bbP ^{21}$, which proves the lemma.
\end{proof}

\begin{cor} \label{monodr for general k3} Consider $f^0:\cU ^0\to \mathfrak{M}^0$ as the family of {\bfseries unmarked} K3 surfaces. Then the group $\Gamma ^0$ is its  monodromy group (Definition \ref{def mon}).
\end{cor}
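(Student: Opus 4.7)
The plan is to verify the three conditions of Definition \ref{def mon} for $G = \Gamma^0$, $S = \mathfrak{M}^0$, and $\cX = \cU^0$. First, the local system $\mathbf{R}^\bullet f^0_* \bbQ_{\cU^0}$ is canonically trivial: the universal marking identifies $H^2(X_s, \bbZ)$ with $L$ for every $s \in \mathfrak{M}^0$, and the remaining cohomology of a K3 surface is canonically trivial. The natural lift of the $\Gamma^0$-action to this trivial local system is the standard representation of $\Gamma^0 \subset O(L)$ on $L_\bbQ$ (trivial on the other graded pieces), which is obviously faithful, so condition (1) holds.

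The main step is condition (2), and the heart of it is a stabilizer computation combined with Lemma \ref{eigenspace for g}. Suppose $\gamma \in \Gamma^0 \setminus \{1\}$ fixes $(X, u) \in \mathfrak{M}^0$. By definition of the action, there is $f \in \Aut(X)$ with $(\gamma u) \cdot f^* = u$, i.e.\ $u^{-1} \gamma^{-1} u = f^*$. Because $f^*$ preserves the Hodge line $H^{2,0}(X)$, the operator $\gamma$ preserves $u_\bbC(H^{2,0}(X))$, so $(X, u) \in E_\gamma$. Since the involution $(X, u) \mapsto (X, -u)$ swaps the two components of $\mathfrak{M}$, we have $-1 \notin \Gamma^0$, and hence every nontrivial $\gamma \in \Gamma^0$ satisfies $\gamma \neq \pm 1$; Lemma \ref{eigenspace for g} then places $E_\gamma$ in a proper analytic subspace of $\mathfrak{M}^0$. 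As $\Gamma^0 \subset O(L)$ is countable,
$$Z := \bigcup_{\gamma \in \Gamma^0 \setminus \{1\}} E_\gamma$$
is a countable union of proper analytic subsets. Thus $\Gamma^0$ acts freely on $S^0 := \mathfrak{M}^0 \setminus Z$; in particular, the kernel $K$ of the $\Gamma^0$-action on $\mathfrak{M}^0$ is trivial, so $\Gamma^0/K = \Gamma^0$.

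Condition (3) is then immediate from the classical coarse moduli statement $\mathfrak{M}/\Gamma = \mathfrak{M}^0/\Gamma^0 =$ (isomorphism classes of K3 surfaces): restricting to $S^0$, the orbit space $S^0/\Gamma^0$ parametrizes exactly those isomorphism classes whose representatives have trivial automorphism group (by the Global Torelli theorem the stabilizer of $(X, u)$ maps isomorphically onto $\Aut(X)$), and these are precisely the fibers of $f^0$ over $S^0$ viewed as unmarked K3 surfaces. Hence all three conditions of Definition \ref{def mon} hold and $\Gamma^0$ is the monodromy group of $f^0 : \cU^0 \to \mathfrak{M}^0$. The only substantive obstacle is the stabilizer/eigenvector translation that feeds into Lemma \ref{eigenspace for g}; once that is in place, the periods' nondegeneracy (already encoded in Lemma \ref{eigenspace for g}) does the rest.
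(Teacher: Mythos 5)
Your proof is correct and follows essentially the same route as the paper: trivialize the local system via the universal marking, observe effectiveness of the $\Gamma^0$-action on $L_\bbQ$, translate a fixed point $(X,u)$ of $\gamma$ into an automorphism of $X$ whose action on $H^2$ preserves the period line, and then invoke Lemma \ref{eigenspace for g} together with $-1\notin\Gamma^0$ and countability of $\Gamma^0$ to get generic freeness. The extra remarks on condition (3) (stabilizers versus $\Aut(X)$) are consistent with the paper's setup and do not change the argument.
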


\begin{proof} In terms of  Definition \ref{def mon} we have $S=\mathfrak{M}^0$, $\cX =\cU ^0$, $G=\Gamma ^0$, $K=\{1\}$. The marking defines a canonical trivialization of the local system ${\bf R}^\bullet f^0_*\bbQ _{\cU ^0}$. Clearly the $\Gamma ^0$-action on $\bbQ \oplus L_\bbQ \oplus \bbQ=H^0(\mathfrak{M},\bbR ^\bullet f^0_*\bbQ _{\cU ^0})$ is effective. Since $\mathfrak{M}^0/\Gamma ^0$ is a coarse moduli space of K3 surfaces, it remains to show that away from a countable number of analytic subsets the $\Gamma ^0$-action on $\mathfrak{M}^0$ is free.

Let $1\neq g \in \Gamma ^0$ and assume that $(X,u)\in (\mathfrak{M}^0)^g$. For simplicity of notation let us identify $H^2(X,\bbZ)$ with $L$ by means of $u$. Then there exists an automorphism $\phi :X\to X$ such that $\phi ^* :H^2(X,\bbC)\to H^2(X,\bbC)$ equals $g_\bbC $.
In particular the line $H^{2,0}(X)$ is contained in an eigenspace of $\phi ^*$. Lemma \ref{eigenspace for g} implies that $(X,u)$ belongs to a proper analytic subspace $E_g$ of $\mathfrak{M}^0$, unless $g=\pm 1$.
We excluded the case $g=1$, and $g=-1$ does not belong to $\Gamma ^0$.

Since $\Gamma ^0$ has countably many elements, the subset of $\mathfrak{M}^0$ on which the $\Gamma ^0$-action is free is the complement of countably many proper analytic subsets, hence in particular it is everywhere dense.
\end{proof}

\subsection{Lattice polarized K3 surfaces and their moduli spaces}
Let $X$ be a projective K3 surface. It is known that the first Chern class map
$$c:Pic(X)\to H^2(X,\bbZ)$$
is injective. By Hodge index theorem $Pic(X)$ is a lattice of signature $(1,t)$.

Let $M$ be an even non-degenerate lattice of signature $(1,s)$. Let
$$\Delta (M)=\{\delta \in M\mid (\delta ,\delta)=-2\}$$
Fix a subset $\Delta (M)^+\subset \Delta (M)$ such that

(i) $\Delta (M)=\Delta ^+\coprod (-\Delta (M)^+)$;

(ii) if $\delta _1,...,\delta _k\in \Delta (M)^+$ and $\delta =\sum n_i\delta _i\in \Delta (M)$ with $n_i\geq 0$, then $\delta \in \Delta (M)^+$.

The choice of a subset $\Delta (M)^+\subset \Delta (M)$ defines the subset
$$C(M)^+=\{h\in M\mid (h,h)>0 \ \text{and $(h,\delta )>0$ for all $\delta \in \Delta (M)^+$}\}$$

\begin{defn} An $M$-{\bfseries polarized} K3 surface is a pair $(X,j)$, where $X$ is K3 surface and $j:M\hookrightarrow Pic(X)$ is a primitive lattice embedding. We say that $(X,j)$ is {\bfseries ample polarized} if in addition $j(M)$ contains the class of an ample divisor on $X$.
%There are also natural notions of pseudo-ample (res. ample) $M$-polarized K3 surfaces \cite{Dolg}.
Two $M$-polarized K3 (resp. ample polarized) surfaces $(X,j)$ and $(X',j')$ are called isomorphic if there exists an isomorphism $f:X\to X'$ such that $j=f^*\cdot j'$.
\end{defn}

\begin{remark} \label{remark polarized means projective} Notice that any $M$-polarized K3 surface $X$ is projective. Indeed, by the signature assumption there exists $q\in M$ such that $(q,q)>0$. So there exists a line bundle $\cL \in Pic (X)$ with $c_1(\cL)^2>0$. This implies that $X$ is projective \cite[Thm. 8]{Kod}.
\end{remark}

Now assume that we are {\bf given a primitive embedding} of lattices $a:M\hookrightarrow L$.

\begin{defn} A {\bfseries marked $M$-polarized} K3 surface is a triple $(X,j,u)$ such that $(X,u)$ is a marked K3 surface, $(X,j)$
is an $M$-polarized K3 surface and in addition
$$a=u\cdot j:M\to L$$
We say that $(X,j,u)$ is {\bfseries marked ample $M$-polarized} if $(X,j)$ is ample $M$-polarized.
%Similarly one defines pseudo-ample (ample) marked $M$-polarized K3 surfaces \cite{Dolg}.
Two marked $M$-polarized K3 surfaces are isomorphic if they are isomorphic as marked K3 surfaces (and hence also as $M$-polarized K3 surfaces).
\end{defn}

Clearly, a marked $M$-polarized K3 surface surface $(X,j,u)$ is uniquely determined by the corresponding marked K3 surface $(X,u)$.

Let $N:=M^\perp _L$ be the orthogonal complement of $M$ in $L$. We have the inclusion of projective spaces $\bbP (N_\bbC )\subset \bbP (L_\bbC)$ and put $\Omega _M:=\Omega \cap \bbP (N_\bbC)$. This is the {\it period domain} for $M$-polarized K3 surfaces. It has 2 connected components.

For any $\delta \in \Delta (N):=\{a\in N\mid (a,a)=-2\}$ set
$$H_\delta :=\{z\in N_{\bbC}\mid (z,\delta )=0\}$$
and define
$$\Omega ^0_M:=\Omega _M\backslash \left(\bigcup _{\delta \in \Delta (N)}H_\delta \cap \Omega _M\right)$$
Since $\Omega _M$ has two connected components, so does $\Omega ^0_M$.

Similarly to Theorem \ref{general moduli space} one can prove the following \cite[Cor. 3.2]{Dolg}

\begin{thm}\label{m-polarized moduli space} (1) There exists a fine  moduli space $\mathfrak{M} _M$ of marked $M$-polarized K3 surfaces. It is a non-separated analytic space, which is an analytic subspace of $\mathfrak{M}$. The universal family $f_M:\cU _M\to \mathfrak{M}_M$ is the restriction of the universal family $f:\cU \to \mathfrak{M}$.

(2) The obvious period map $P_M:\mathfrak{M}_M\to \Omega _M$ is analytic, etale and surjective.

(3) The diagram of analytic maps
$$\begin{array}{rcr}
\mathfrak{M}_M & \hookrightarrow & \mathfrak{M}\\
P_M \downarrow & & P\downarrow \\
\Omega _M & \hookrightarrow & \Omega
\end{array}
$$
commutes.

(4) Let $\mathfrak{M}^a_M\subset \mathfrak{M}_M$ denote the subspace parametrizing marked ample $M$-polarized K3 surfaces. Then the restriction to $\mathfrak{M}^a_M$ of the family  $f_M:\cU _M\to \mathfrak{M}_M$ is the universal family $f^a_M:\cU ^a _M\to \mathfrak{M}^a_M$ of marked ample $M$-polarized K3 surfaces. The subset $\mathfrak{M}^a_M\subset \mathfrak{M}_M$ is open. The restriction of the period map $P_M$ is an isomorphism
$$P_M^a:\mathfrak{M}^a_M\stackrel{\sim}{\to} \Omega _M^0$$
In particular, the space $\mathfrak{M}^a_M$ has 2 connected components.
\end{thm}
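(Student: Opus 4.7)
My plan is to realize $\mathfrak{M}_M$ as the preimage of the period domain $\Omega_M$ under the universal period map $P$, and then deduce parts (2)--(4) from the properties of $P$ recorded in Theorem \ref{general moduli space} together with the (strong) Torelli theorem for projective K3 surfaces.

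For part (1), the key identification is
$$\mathfrak{M}_M = P^{-1}(\Omega_M) \subset \mathfrak{M}.$$
Indeed, a marked K3 surface $(X,u)$ extends to a marked $M$-polarized K3 surface (with embedding $j = u^{-1}\circ a$) if and only if $u^{-1}(a(M)) \subset \Pic(X)$. By the Lefschetz $(1,1)$ theorem, a class $m \in H^2(X,\bbZ)$ lies in $\Pic(X)$ precisely when it is orthogonal to $H^{2,0}(X)$; translating via $u$, this says the period point $P(X,u)\in \bbP(L_\bbC)$ lies in $\bbP(a(M)^\perp_\bbC) = \bbP(N_\bbC)$. Primitivity of $j$ is automatic since $a$ is primitive and $u$ is an isomorphism. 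Thus $\mathfrak{M}_M$ is an analytic subspace of $\mathfrak{M}$, and the restriction of the universal family $\cU\to\mathfrak{M}$ is tautologically the universal family of marked $M$-polarized K3 surfaces.

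For parts (2) and (3), the commutativity of the diagram is built into the identification above. Analyticity and \'etaleness of $P_M$ are inherited from $P$. For surjectivity onto $\Omega_M$, pick $\omega \in \Omega_M$; by surjectivity of $P$ (Theorem \ref{general moduli space}), there is a marked K3 surface $(X,u)$ with $P(X,u) = \omega$, and the argument above shows $(X, u^{-1}\circ a, u) \in \mathfrak{M}_M$.

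For part (4), openness of $\mathfrak{M}^a_M$ follows from the openness of the ample cone in a family (an ample class deforms to an ample class). To identify $\mathfrak{M}^a_M$ with $P_M^{-1}(\Omega^0_M)$, one uses the standard description of the K\"ahler cone of a projective K3 surface: inside the positive cone of $\Pic(X)_\bbR$, the ample cone is the chamber cut out by the hyperplanes $\delta^\perp$ for the effective $(-2)$-classes $\delta$. When the period $\omega$ avoids every $H_\delta$ with $\delta \in \Delta(N)$, the only $(-2)$-classes that can be effective lie in $M$ itself, so some element of $C(M)^+$ becomes an ample class in $j(M)$; conversely, if $\omega \in H_\delta$ for some $\delta \in \Delta(N)$, a reflection argument forces $j(M)$ to miss the ample cone. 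The main obstacle, and the step that does the real work, is showing that $P^a_M$ is a bijection onto $\Omega^0_M$: combined with \'etaleness, this will promote $P^a_M$ to an analytic isomorphism. Injectivity is where one invokes the strong Torelli theorem for projective K3 surfaces: two marked ample $M$-polarized K3 surfaces with the same period are isomorphic as marked $M$-polarized surfaces. Surjectivity follows from part (2) together with the K\"ahler-cone description above applied to the preimage produced there. Finally, the two connected components of $\mathfrak{M}^a_M$ correspond to the two components of $\Omega^0_M$ under the isomorphism $P^a_M$.
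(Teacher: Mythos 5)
The paper does not actually prove this theorem: it is stated with the remark that one can argue ``similarly to Theorem \ref{general moduli space}'' and a citation to \cite[Cor.\ 3.2]{Dolg}, so there is no in-paper proof to compare against. Your overall route --- realizing $\mathfrak{M}_M$ as $P^{-1}(\Omega_M)$ via the Lefschetz $(1,1)$ theorem, inheriting analyticity, \'etaleness and surjectivity of $P_M$ from the corresponding properties of $P$, and invoking strong Torelli for part (4) --- is the standard one from \cite{Dolg}, and your treatment of parts (1)--(3) is correct (including the observation that primitivity of $j$ in $\Pic(X)$ follows from primitivity of $a$ because $\Pic(X)$ is saturated in $H^2(X,\bbZ)$).

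There is, however, a genuine gap in part (4), at exactly the step you flag as doing the real work. The claim that for $\omega\in\Omega^0_M$ ``the only $(-2)$-classes that can be effective lie in $M$ itself'' is false: the condition $\omega\notin\bigcup_{\delta\in\Delta(N)}H_\delta$ only excludes $(-2)$-classes of $\Pic(X)$ that are \emph{orthogonal} to $j(M)$, while $\Pic(X)$ may strictly contain $j(M)$ and carry $(-2)$-classes whose projections to both $M_\bbQ$ and $N_\bbQ$ are nonzero. Concretely, take $M=\bbZ h$ with $h^2=4$ and $X$ a quartic containing a line $\ell$, so $\Pic(X)=\langle h,\ell\rangle$ with $(h,\ell)=1$; the marked $M$-polarized surface obtained by composing the marking with the reflection $s_\ell$ has $j'(M)=\bbZ(h+\ell)$, which contains no ample class even though its period lies in $\Omega^0_M$. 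Thus $\mathfrak{M}^a_M$ is in general a \emph{proper} subset of $P_M^{-1}(\Omega^0_M)$, and your surjectivity argument (``apply the K\"ahler-cone description to the preimage produced by part (2)'') breaks down: the point of $\mathfrak{M}_M$ produced there need not be ample $M$-polarized. The actual content of (4) is that the fiber of $P_M$ over $\omega\in\Omega^0_M$ --- which, by strong Torelli, is indexed by the group generated by $\pm 1$ and the reflections in the $(-2)$-classes of $\Pic(X)$ modulo isometries induced by automorphisms --- contains exactly one ample point in each connected component. Producing that point requires modifying the marking by a suitable element of the Weyl group so that $j(M)_\bbR$ meets the ample chamber, and uniqueness requires checking that no nontrivial such modification preserves ampleness; neither step is supplied by the period condition alone. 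The same issue infects your injectivity argument: for two ample points with equal period, the induced Hodge isometry must be shown to carry an ample class to an ample class before strong Torelli can be applied, and this again is a chamber argument rather than a formal consequence of $\omega\in\Omega^0_M$.
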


Consider the group
$$\Gamma _M=\{\sigma \in O(L)\ \vert \ \sigma (m)=m\ \text{for all $m\in a(M)$}\}$$
This group acts on the space $\mathfrak{M}_M$ in the obvious way: $\sigma (X,j,u)=(X,j,\sigma \cdot u)$. It preserves the subspace $\mathfrak{M}^a_M$

Notice that the above concepts of a marked $M$-polarized K3 surface, the moduli space $\mathfrak{M}_M$, and the group $\Gamma _M$ only make sense after we have made a choice of a primitive lattice embedding $a: M\hookrightarrow L$.
As in \cite{Dolg} we consider the following condition on the lattice $M$:

\medskip

(U) For any two primitive embeddings $a_1, a_2 : M \hookrightarrow L$, there exists an isometry $\sigma : L \to L$ such that
$\sigma \cdot a_1= a_2$.

\begin{lemma} \label{lemma coarse moduli space} Assuming condition (U), the quotient space $\mathfrak{M}_M/\Gamma _M$ is the coarse moduli space of $M$-polarized K3 surfaces. Hence also $\mathfrak{M}^a_M/\Gamma _M$ is the coarse moduli space of ample $M$-polarized K3 surfaces.
\end{lemma}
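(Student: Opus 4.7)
The plan is to produce a bijection at the level of points between $\mathfrak{M}_M/\Gamma_M$ and the set of isomorphism classes of $M$-polarized K3 surfaces, from which the ample version follows automatically because $\mathfrak{M}^a_M$ is visibly $\Gamma_M$-stable (the action of $\Gamma_M$ changes only the marking, not the polarization, and ampleness depends only on the polarization data). The hypothesis (U) will enter only in showing surjectivity.

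First I would observe that any $\sigma \in \Gamma_M$ sends $(X,j,u)$ to $(X,j,\sigma \cdot u)$, preserving the underlying $M$-polarized surface. Hence the forgetful map $(X,j,u) \mapsto [(X,j)]$ descends to a well-defined map $\pi \colon \mathfrak{M}_M/\Gamma_M \to \{\text{isomorphism classes of $M$-polarized K3 surfaces}\}$. For surjectivity of $\pi$, given any $M$-polarized K3 surface $(X,j)$, I would pick any marking $u_0 \colon H^2(X,\bbZ) \isomoto L$; then $u_0 \cdot j \colon M \hookrightarrow L$ is a primitive sublattice embedding, so by hypothesis (U) there exists $\tau \in O(L)$ with $\tau \cdot u_0 \cdot j = a$. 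The marking $u := \tau \cdot u_0$ then satisfies $u \cdot j = a$, so $(X,j,u)$ is an object of $\mathfrak{M}_M$ projecting to the isomorphism class of $(X,j)$.

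For injectivity of $\pi$, suppose $f \colon X \isomoto X'$ realizes $(X,j) \cong (X',j')$ with $j = f^* \cdot j'$. I would set $\sigma := u \cdot f^* \cdot (u')^{-1} \in O(L)$. Using $a = u \cdot j = u' \cdot j'$ together with $j = f^* \cdot j'$, a direct computation gives $\sigma \cdot a = u \cdot f^* \cdot j' = u \cdot j = a$, so $\sigma \in \Gamma_M$; moreover $\sigma \cdot u' = u \cdot f^*$, which is exactly the compatibility making $f$ an isomorphism of marked $M$-polarized K3 surfaces $(X,j,u) \cong (X',j',\sigma u')$. Hence $(X,j,u)$ and $(X',j',u')$ lie in the same $\Gamma_M$-orbit. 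The whole argument is essentially bookkeeping of pullback conventions, with the only real ingredient being (U); no hard step is anticipated. For the coarse moduli property beyond the bijection on isomorphism classes, one uses that any family of $M$-polarized K3 surfaces admits local markings (producing local lifts to $\mathfrak{M}_M$, by the fine moduli property of Theorem \ref{m-polarized moduli space}), and on overlaps two such lifts differ by an element of $\Gamma_M$, yielding a globally well-defined classifying morphism to $\mathfrak{M}_M/\Gamma_M$.
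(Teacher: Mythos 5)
Your argument is correct and is essentially identical to the paper's proof: surjectivity of the forgetful map via condition (U) by adjusting an arbitrary marking with an isometry of $L$, and injectivity on $\Gamma_M$-orbits by checking that $u\cdot f^*\cdot (u')^{-1}$ fixes $a(M)$ pointwise. (Your closing paragraph about local markings of families is extra; the paper only needs the bijection on isomorphism classes, which is how the coarse moduli property is used in Definition \ref{def mon}.)
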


\begin{proof} The assumption (U) means that any $M$-polarized K3 surface $(X,j)$ can be complemented to a marked $M$-polarized K3 surface. Indeed, choose any lattice isomorphism $u:H^2(X,\bbZ )\to L$. Then by condition (U) there exists an automorphism $\sigma :L\to L$, such that $(X,j,\sigma \cdot u)$ is a marked $M$-polarized K3 surface. In particular, the forgetful map
$(X,j,u) \mapsto (X,j)$
from isomorphism classes of marked $M$-polarized K3 surfaces to isomorphism classes of $M$-polarized K3 surfaces is surjective. Obviously, the group $\Gamma _M$ acts on the fibers of this map.

It remains to show that given marked $M$-polarized K3 surfaces $(X,j,u)$ and $(X',j',u')$ such that $(X,j)\simeq (X',j')$, there exists a $\tau \in \Gamma _M$, such that
$(X,j,u)\simeq (X',j',\tau \cdot u')$. So assume that there exists an isomorphism $\phi :X\to X'$ such that
$$j=\phi ^* \cdot j':M\to Pic(X)\subset H^2(X,\bbZ)$$
Then the automorphism $\tau :=u\cdot \phi ^*\cdot (u')^{-1}:L\to L$ is the identity on $a(M)$, i.e. $\tau \in \Gamma _M$, which means that $\phi$ induces an isomorphism $(X,j,u)\simeq (X',j',\tau \cdot u')$.
\end{proof}

Let $\mathfrak{M}^{a,0}_M\subset \mathfrak{M}^a_M$ be one of the connected components (Theorem \ref{m-polarized moduli space}) and let $f^{a,0}_M:\cU ^{a,0}_M\to \mathfrak{M}^{a,0}_M$ be the restriction of the universal family $f^a_M:\cU ^a_M\to \mathfrak{M}^a_M$. Let $\Gamma _M^0\subset  \Gamma _M$be the stabilizer of the component $\mathfrak{M}^{a,0}_M$. So the index of $\Gamma _M^0$ in  $\Gamma _M$ is at most 2.

\begin{remark}\label{gamma m is a subgr of o(n)}
We note for future reference that $\Gamma _M$ (and hence also $\Gamma ^0_M$) is a subgroup of finite index in the orthogonal group $O(N)$ ($N=M^\perp _L$) \cite[Prop. 3.3]{Dolg}.
\end{remark}

\begin{prop} \label{mon group for m-polarized k3} Assume that condition (U) holds. Consider $f^{a,0}_M:\cU ^{a,0}_M\to \mathfrak{M}^{a,0}_M$ as a family of {\bfseries unmarked} ample $M$-polarized K3 surfaces. Then its monodromy group is isomorphic  to $\Gamma ^0 _M$ up to finite groups (Definition \ref{def mon}).
\end{prop}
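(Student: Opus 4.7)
The plan is to verify the hypotheses of Definition \ref{def mon} with $S = \mathfrak{M}^{a,0}_M$, $\cX = \cU^{a,0}_M$, $G = \Gamma^0_M$, and $K$ the kernel of the $G$-action on $S$. The marking furnishes a canonical trivialization of the local system $\bR^\bullet f^{a,0}_{M*}\bbQ_{\cU^{a,0}_M}$, under which $H^0(S, \bR^\bullet f^{a,0}_{M*}\bbQ) \simeq \bbQ \oplus L_\bbQ \oplus \bbQ$ carries the natural action of $\Gamma^0_M$ through its inclusion into $O(L)$. Condition 1 of Definition \ref{def mon} (effectiveness on global sections) is then automatic, since any $1 \neq \sigma \in \Gamma^0_M$ already acts nontrivially on $L_\bbQ$.

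I would next adapt the argument of Lemma \ref{eigenspace for g} and Corollary \ref{monodr for general k3}, replacing $L$ and $\Omega$ by $N := M^\perp_L$ and $\Omega^0_M \subset \bbP(N_\bbC)$. A point $(X, j, u) \in \mathfrak{M}^{a,0}_M$ is fixed by $\sigma \in \Gamma^0_M$ precisely when there is an automorphism $\phi \colon X \to X$ with $\phi^* = u^{-1}\sigma^{-1} u$, so in particular $u_\bbC(H^{2,0}(X)) \subset N_\bbC$ must be an eigenline of $\sigma|_{N_\bbC}$. Since by Theorem \ref{m-polarized moduli space}(4) the period map $P^a_M$ is an isomorphism onto $\Omega^0_M$, the action of $\Gamma^0_M$ on $\mathfrak{M}^{a,0}_M$ is intertwined with its action on $\Omega^0_M$; hence $\sigma$ lies in $K$ iff $\sigma|_{N_\bbC}$ is scalar on every line of $\Omega^0_M$, equivalently a global scalar. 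As $\sigma|_N$ is an integral isometry, this forces $\sigma|_N = \pm 1$, and combined with the injectivity of $\Gamma_M \hookrightarrow O(N)$ from Remark \ref{gamma m is a subgr of o(n)} this yields $|K| \leq 2$. For any $\sigma \in \Gamma^0_M \setminus K$, the eigenvectors of the nonscalar operator $\sigma|_{N_\bbC}$ lie in a union of proper linear subspaces of $N_\bbC$, and because $\Omega^0_M$ is open in the nondegenerate quadric $\{z \in \bbP(N_\bbC) : (z,z) = 0,\ (z,\bar z) > 0\}$ (minus countably many hyperplane sections $H_\delta \cap \Omega_M$), it cannot be contained in any hyperplane of $\bbP(N_\bbC)$. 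Consequently the fixed locus of $\sigma$ lies in a proper analytic subset of $\mathfrak{M}^{a,0}_M$, and the countable union over $\sigma \in \Gamma^0_M \setminus K$ is the required exceptional set $Z$, establishing condition 2.

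For condition 3, Lemma \ref{lemma coarse moduli space} (using hypothesis (U)) identifies $\mathfrak{M}^a_M/\Gamma_M$ with the coarse moduli of ample $M$-polarized K3 surfaces, and since $\Gamma^0_M$ is by definition the stabilizer in $\Gamma_M$ of the component $\mathfrak{M}^{a,0}_M$, restriction to that component yields $\mathfrak{M}^{a,0}_M/\Gamma^0_M = \mathfrak{M}^a_M/\Gamma_M$. Definition \ref{def mon} then produces the monodromy group $\Gamma^0_M/K$ with $|K| \leq 2$, which is isomorphic to $\Gamma^0_M$ up to finite groups. The main technical subtlety I anticipate is the kernel analysis: one must argue carefully that an element acting trivially on the entire moduli space $\mathfrak{M}^{a,0}_M$ is already $\pm 1$ on $N$, which rests on the étale surjectivity of the period map onto a Zariski-dense subset of the quadric in $\bbP(N_\bbC)$ together with the injectivity of $\Gamma_M \hookrightarrow O(N)$.
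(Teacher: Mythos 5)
Your proposal is correct and follows essentially the same route as the paper: trivialize the local system via the marking to get effectiveness on global sections, invoke Lemma \ref{lemma coarse moduli space} (under condition (U)) for the coarse moduli identification, and transfer the generic-freeness question through the period map to the $O(N)$-action on $\Omega_M$, where the eigenvector argument of Lemma \ref{eigenspace for g} shows the action is generically free modulo the finite central kernel $\Gamma_M\cap\{\pm 1\}_{O(N)}$. Your explicit bound $|K|\leq 2$ via the injectivity of $\Gamma_M\hookrightarrow O(N)$ is just a slightly more detailed phrasing of the same kernel analysis.
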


\begin{proof} The marking defines a canonical trivialization of the local system ${\bf R}^\bullet f^{a,0}_{M*}\bbQ _{\cU ^{a,0}_M}$ on $\mathfrak{M}^{a,0}_M$ and clearly the $\Gamma ^0_M$-action on
$\bbQ \oplus L_\bbQ \oplus \bbQ =H^0(\mathfrak{M}^{a,0}_M,{\bf R}^\bullet f^{a,0}_{M*}\bbQ _{\cU^{a,0} _M})$ is effective. Since $\mathfrak{M}^{a,0} _M/\Gamma ^0_M$ is the coarse moduli space of ample $M$-polarized K3 surfaces appearing in the family $\cU ^{a,0}_M$ (Lemma
\ref{lemma coarse moduli space}), it remains to show that the $\Gamma ^{0} _M$-action on $\mathfrak{M}^{a,0}_M$ is generically free modulo a finite kernel.

As in the proof of Corollary \ref{monodr for general k3} it is enough to show that $\Gamma _M$ acts generically free modulo a finite kernel on the period domain $\Omega _M=\Omega \cap \bbP (N_\bbC )$ (Theorem \ref{m-polarized moduli space}). Since $\Gamma _M\subset O(N)$ it suffices to analyze the $O(N)$-action on $\Omega _M$.
Applying a version of Lemma \ref{eigenspace for g} with $O(N)$ and $\Omega _M$ instead of $O(L)$ and $\Omega $, we find that $O(N)$ acts generically free on $\Omega _M$ modulo its center $\{\pm 1\}_{O(N)}$.
It follows that $\Gamma _M$ acts generically free on $\Omega _M$ modulo its central subgroup $\Gamma _M\cap \{\pm 1\}_{O(N)}$. Hence $\Gamma _M$ acts on $\mathfrak{M}_M$ either generically free or generically free modulo $\Gamma _M\cap \{\pm 1\}_{O(N)}$.
\end{proof}

\subsection{Mirror symmetric families of lattice polarized
K3 surfaces} Let $M$ be a lattice as above with a fixed primitive embedding of lattices $M\hookrightarrow L$. We will identify $M$ with its image in $L$. We call a primitive sublattice $M^{\vee}\subset L$ a {\it mirror dual} of $M$ if there is  a direct sum decomposition
$$L=M\oplus U \oplus M^{\vee}$$
The signature of $M^\vee $ is $(1,18-s)$ (if the signature of $M$ is $(1,s)$). It is clear that $M=M^{\vee \vee}$. (Our definition of a mirror dual sublattice is a somewhat simplified version of \cite{Dolg}).

\begin{defn} In the above notation we consider the universal families
$f^{a,0}_M:\cU^{a,0} _M\to \mathfrak{M}^{a,0}_M$ and $f^{a,0}_{M^\vee}:\cU^{a,0} _{M^\vee}\to \mathfrak{M}^{a,0}_{M^\vee}$ as {\bfseries mirror symmetric families} of ample lattice polarized K3 surfaces.
\end{defn}

Our main result is the following.

\begin{thm}\label{main thm for k3} In the above notation assume that the lattices $M$ and $M^\vee$ satisfies condition (U). Then the groups $G^{mon}(\cU^{a,0} _{M^\vee})$ and $G^{eq}(\cU^{a,0} _{M})$ are isomorphic up to finite groups. That is, Conjecture \ref{subs main conj} holds for mirror symmetric families of ample lattice polarized K3 surfaces.
\end{thm}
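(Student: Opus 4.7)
The plan is to show that, up to finite groups, both $G^{mon}(\cU^{a,0}_{M^\vee})$ and $G^{eq}(\cU^{a,0}_M)$ are isomorphic to the orthogonal group $O(M\oplus U)$ of the lattice $M\oplus U\subset L$, where $U$ is the hyperbolic summand appearing in the mirror decomposition $L=M\oplus U\oplus M^\vee$. Since the equivalence $\sim$ is transitive, chaining the two resulting equivalences yields the theorem.

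For the monodromy side I would apply Proposition \ref{mon group for m-polarized k3} to the mirror family $\cU^{a,0}_{M^\vee}$; this requires condition (U) for $M^\vee$, which is supplied by the hypothesis. The proposition gives $G^{mon}(\cU^{a,0}_{M^\vee})\sim \Gamma^0_{M^\vee}$ up to finite groups. By Remark \ref{gamma m is a subgr of o(n)}, the group $\Gamma_{M^\vee}$ has finite index in $O\bigl((M^\vee)^\perp_L\bigr)$, and $\Gamma^0_{M^\vee}$ has index at most $2$ in $\Gamma_{M^\vee}$. Using the mirror decomposition, $(M^\vee)^\perp_L=M\oplus U$, and so $G^{mon}(\cU^{a,0}_{M^\vee})\sim O(M\oplus U)$.

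For the autoequivalence side, pick a very general $X$ in the family $\cU^{a,0}_M$. By a standard Noether--Lefschetz argument applied to the period map on $\mathfrak{M}^{a,0}_M$ (using Theorem \ref{m-polarized moduli space} together with Lemma \ref{eigenspace for g}-type genericity on the period domain), outside a countable union of proper analytic subsets one has $NS(X)=j(M)$, so the transcendental lattice satisfies $T(X)=j(M)^\perp_L\cong U\oplus M^\vee$. As recalled in the proof of Corollary \ref{cor conj for k3 surf}, $G^{eq}(X)$ is a discrete subgroup of $O_{hdg}(\tilde H^2(X,\bbZ))$ of index at most $2$. The rational Mukai Hodge structure now splits orthogonally as
\[
\tilde H^2(X,\bbQ)=\Lambda'_\bbQ\oplus T(X)_\bbQ,\qquad \Lambda':=\bbZ e\oplus j(M)\oplus \bbZ\eta,
\]
with $\Lambda'_\bbQ$ of pure type $(0,0)$ and $T(X)_\bbQ$ of K3 type. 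The argument used in the proof of Lemma \ref{easy useful lemma} shows that $O_{hdg}(T(X))$ is finite, and then, exactly as in that proof, the restriction map $O_{hdg}(\tilde H^2(X,\bbZ))\to O(\Lambda')$ has finite kernel and image of finite index. Hence $G^{eq}(\cU^{a,0}_M)=G^{eq}(X)\sim O(\Lambda')\cong O(M\oplus U)$ up to finite groups.

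The main technical point, and the step where care is needed, is the autoequivalence side: one must check both that for a very general member the algebraic part of the Mukai lattice is exactly $\bbZ e\oplus j(M)\oplus \bbZ\eta$, and that the restriction map $O_{hdg}(\tilde H^2(X,\bbZ))\to O(\Lambda')$ really has image of finite index in $O(\Lambda')$. The latter is standard lattice theory using that $\Lambda'\oplus T(X)$ embeds in $\tilde H^2(X,\bbZ)$ as a sublattice of full rank and finite index, but it is the place where the finite groups invisible in the final statement $\sim$ actually arise. Once these two verifications are carried out, the two chains of equivalences compose to give $G^{mon}(\cU^{a,0}_{M^\vee})\sim G^{eq}(\cU^{a,0}_M)$.
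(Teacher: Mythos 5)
Your proposal is correct and follows essentially the same route as the paper: the monodromy side is handled exactly via Proposition \ref{mon group for m-polarized k3} and Remark \ref{gamma m is a subgr of o(n)}, and the autoequivalence side reduces, via the genericity statement $NS(X)=j(M)$ (the paper's Lemma \ref{for gen k3 equal m=pic}, proved by the same period-map/Baire argument you sketch) and the orthogonal splitting of the Mukai lattice, to the finite-index/finite-kernel analysis already carried out in the proof of Lemma \ref{easy useful lemma}. The two verifications you flag as needing care are precisely the ones the paper supplies (the first as Lemma \ref{for gen k3 equal m=pic}, the second asserted as standard in Lemma \ref{easy useful lemma}).
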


\subsection{Proof of Theorem \ref{main thm for k3}} The proof will take several steps.

By assumption we have sublattices $M,M^\vee \subset L$ of signatures $(1,s)$ and $(1,18-s)$ respectively that satisty
$$(M^\vee)^\perp _L=M\oplus U$$
By Proposition \ref{mon group for m-polarized k3} the monodromy group of the family $\cU ^{a,0}_{M^\vee}$ is isomorphic up to finite groups to
$$\Gamma _{M^\vee}=\{\sigma \in O(L)\ \vert \ \sigma (m)=m\ \text{for all $m\in M^\vee$}\}$$
We have the natural injective homomorphism $\Gamma _{M^\vee}\hookrightarrow  O((M^\vee)^\perp _L)=O(M\oplus U)$ and by Remark \ref{gamma m is a subgr of o(n)} the image is a subgroup of finite index. Therefore $G^{mon}(\cU ^{a,0} _{M^\vee})$ is isomorphic up to finite groups to the group $O(M\oplus U)$, and it suffices to prove the following proposition.

\begin{prop} \label{general deq for m-polarized k3} For a general  $M$-polarized K3 surface $X$ the group $G^{eq}(X)$ is isomorphic up to finite groups to $O(M\oplus U)$.
\end{prop}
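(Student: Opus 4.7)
The plan is to identify $G^{eq}(X)$ up to finite groups with the group of Hodge isometries of the integral Mukai lattice $\Lambda$, and then to show that this Hodge isometry group is itself isomorphic up to finite groups to $O(M\oplus U)$.

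First I would invoke the derived Torelli theorem for K3 surfaces (Mukai--Orlov, \cite[Ch.~10]{Huyb}, already used in Corollary \ref{cor conj for k3 surf}): the group $G^{eq}(X)$ is the subgroup of orientation-preserving Hodge isometries of $\Lambda$, and has index at most $2$ in the full group $O_{hdg}(\Lambda)$. So it suffices to show that $O_{hdg}(\Lambda)$ is isomorphic up to finite groups to $O(M\oplus U)$.

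Next, by the Noether--Lefschetz argument underlying Lemma \ref{eigenspace for g}, the condition $\Pic(X)\supsetneq j(M)$ cuts out a countable union of proper analytic subsets of $\mathfrak{M}^{a,0}_M$, so for very general $X$ one has $\Pic(X)=j(M)\simeq M$. For such $X$, the $(0,0)$-part of the weight-zero Hodge structure
\[
\tilde H^2(X,\bbQ)=\bbQ e\oplus H^2(X,\bbQ(1))\oplus \bbQ\eta
\]
is integrally $\Lambda':=\bbZ e\oplus M\oplus \bbZ\eta$. Since the Mukai form on $\bbZ e\oplus \bbZ\eta$ is hyperbolic and orthogonal to $M$, the lattice $\Lambda'$ is isometric to $M\oplus U$. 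The transcendental part is $T:=M^\perp_L\subset H^2(X,\bbZ)$ with its induced Hodge structure.

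Every Hodge isometry of $\Lambda$ preserves the orthogonal decomposition $\Lambda'\oplus T\subset \Lambda$, so restriction yields an injective homomorphism
\[
r\colon O_{hdg}(\Lambda)\hookrightarrow O(\Lambda')\times O_{hdg}(T).
\]
I would deduce that $O_{hdg}(T)$ is finite for very general $X$ by exactly the compactness argument already used in Lemmas \ref{easy useful lemma} and \ref{for future use}: $O_{hdg}(T)$ preserves the orthogonal splitting $T_\bbR=W\oplus W^\perp$ with $W=T_\bbR\cap(H^{2,0}\oplus H^{0,2})$ positive definite and $W^\perp$ negative definite, so it embeds into the compact group $O(W)\times O(W^\perp)$, and being discrete must be finite. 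Finally, I would show that the image of $r$ has finite index in $O(\Lambda')\times O_{hdg}(T)$ by invoking Nikulin's theory of discriminant-form glueings: an isometry of $\Lambda'\oplus T$ extends to the unimodular overlattice $\Lambda$ if and only if the actions it induces on the anti-isomorphic finite discriminant groups $(\Lambda')^*/\Lambda'$ and $T^*/T$ match, and this is a finite-index condition. Combining these three facts, $O_{hdg}(\Lambda)$ is isomorphic up to finite groups to $O(\Lambda')\simeq O(M\oplus U)$, whence the same holds for $G^{eq}(X)$.

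The main obstacle is the Nikulin glueing step. The finiteness of $O_{hdg}(T)$ and the injectivity of $r$ are essentially automatic from the material already developed in Section 3, but verifying that $r$ has finite cokernel requires one to unpack Nikulin's discriminant obstruction for the specific overlattice $\Lambda\supset \Lambda'\oplus T$ and to check that, after combining with the finiteness of $O_{hdg}(T)$, the projection to $O(\Lambda')$ still hits a finite-index subgroup. Once this technicality is handled, the rest is a direct assembly of the derived Torelli theorem and the positive-definiteness argument.
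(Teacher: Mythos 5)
Your proposal is correct and follows essentially the same route as the paper: reduce via the derived Torelli theorem to comparing $O_{hdg}(\tilde H^2(X,\bbZ))$ with $O(M\oplus U)$, show $\Pic(X)=M$ for very general $X$ by a Baire-category argument on the period domain, and then run the restriction-to-$\Lambda'=\bbZ e\oplus M\oplus\bbZ\eta$ argument of Lemma \ref{easy useful lemma} with the transcendental factor contributing only a finite group. The only difference is that you spell out, via Nikulin's discriminant-form glueing, the finite-index-of-image step that the paper dismisses as ``clear,'' which is a reasonable and correct elaboration rather than a different approach.
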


\begin{proof} By Remark \ref{remark polarized means projective} any $M$-polarized K3 surface is projective. For any projective K3 surface $Y$ the group $G^{eq}(Y)$ is well known: it is a subgroup of the group $O_{hdg}(\tilde{H}^2(Y,\bbZ))$ (Section \ref{prelim on hyperk}) of index at most two \cite[Ch. 10]{Huyb}.
So for the proof of Proposition \ref{general deq for m-polarized k3}
it remains to show that for a general $M$-polarized K3 surface $X$ the groups $O(M\oplus U)$ and $O_{hdg}(\tilde{H}^2(X,\bbZ))$ are isomorphic up to finite groups.

\begin{lemma}\label{for gen k3 equal m=pic} For a general $M$-polarized K3 surface $X$ we have the equality $M=Pic(X)$.
\end{lemma}

\begin{proof} It follows from the assumption (U) on the lattice $M$ that any $M$-polarized K3 surface can be complemented to a marked $M$-polarized K3 surface (see proof of Lemma \ref{lemma coarse moduli space}). So it suffices to prove the equality $M=Pic(X)$ for a general {\it marked} $M$-polarized K3 surface $X$. Given such a surface $X$ we consider the corresponding point $[X]\in \mathfrak{M}_M$ and its image $P_M([X])\in \Omega _M$ in the period domain. If $l\in Pic(X)$, then $P_M([X])\in \Omega _M\cap l^\perp _L$ and unless $l\in M$, this intersection $\Omega _M\cap l^\perp _L$ is a proper analytic subset of $\Omega _M$. So $P_M^{-1}(\Omega _M\cap l^\perp _L)$ is a proper analytic subset of $\mathfrak{M}_M$ (Theorem \ref{m-polarized moduli space}). Because $\mathfrak{M}_M$ is a Baire space it is not a countable union of nowhere dense subsets. This proves the lemma.
\end{proof}

For an $M$-polarized K3 surface $(X,j)$ we will identify $M$ with its image $j(M)\subset Pic(X)$. Consider the extension of the sublattice $M\subset H^2(X,\bbZ)$ to the primitive sublattice
$$\tilde{M}:=M\oplus H^0(X,\bbZ)\oplus H^4(X,\bbZ)$$
of $\tilde{H}^2(X,\bbZ)$. Then abstractly $\tilde{M}\simeq M\oplus U$. In particular $O(\tilde{M})=O(M\oplus U)$. Assuming that $X$ is general, by Lemma \ref{for gen k3 equal m=pic} we may assume that $M=Pic(X)$. Then the group $O(\tilde{M})$ and $O_{hdg}(\tilde{H}^2(X,\bbZ))$ are isomorphic up to finite groups as is shown in the proof of Lemma \ref{easy useful lemma}. This proves Proposition \ref{general deq for m-polarized k3} and Theorem \ref{main thm for k3}.
\end{proof}

\section{Conjecture \ref{subs main conj} for dual families of abelian varieties}\label{sect for ab var}

In \cite{GLO} there was defined a notion of {\it mirror symmetry for algebraic pairs} (see Definition \ref{defn of mirror} below). An {\it algebraic pair} $(A,\omega )$ consists of an abelian variety $A$ and an element $\omega$ of the complexified ample cone of $A$ (Definition \ref{def of alg pair}). Building on this work we define the notion of mirror symmetric {\it families of abelian varieties} (Definition \ref{defn of mirror sym for ab var}).
Then we prove Conjecture \ref{subs main conj} for such families. We start by recalling some relevant facts about abelian varieties.

\subsection{Complex tori and abelian varieties}\cite{Mu1}, \cite{BirLa}, \cite{GLO}.

\subsubsection{} Let $\Gamma \simeq \bbZ ^{2n}$ be a lattice, $V=\Gamma \otimes \bbR \simeq \bbR ^{2n}$ and $J\in \End _{\bbR}(V)$, s.t. $J^2=-1$.
(Here {\it a lattice} means a discrete subgroup of finite covolume).
That is $J$ is a complex structure on $V$. This way we obtain an $n$-dimensional complex torus
$$A=(V/\Gamma ,J).$$
Note the canonical isomorphisms
$$\Gamma =H_1(A,\bbZ),\quad V=H_1(A,\bbR).$$
Sometimes we will write $\Gamma _A,V_A,J_A$.

Given another complex torus $B=(V_B/\Gamma _B,J_B)$, the group $\Hom (A,B)$ consists of homomorphisms $f:\Gamma _A\to \Gamma _B$ such that
$$J_B\cdot f_{\bbR}=f_{\bbR}\cdot J_A:V_A\to V_B$$
Thus the abelian group $\Hom (A,B)$ can be considered as a subgroup of $\Hom (\Gamma _A,\Gamma _B)$.

\subsubsection{} One has the dual torus $\hat{A}$ defined as follows. Put $\Gamma ^*=\Hom _{\bbZ}(\Gamma ,\bbZ),$ $V^*=\Gamma ^*\otimes \bbR=\Hom (V,\bbR)$ and $\hat{J}:V^*\to V^*$, s.t. $(\hat{J}w)(v)=w(-Jv)$ for $v\in V,w\in V^*$. Then by definition
$$\hat{A}=(V^*/\Gamma ^*,\hat{J}).$$

\subsubsection{}\label{pic} Denote by $\Pic _A$ the Picard group of $A$. Let $\Pic ^0_A\subset \Pic _A$ be the subgroup of line bundles with the trivial Chern class. It has a natural structure of a complex torus. Moreover, there exists a natural isomorphism of complex tori
$$\hat{A}\simeq \Pic ^0_A.$$
Every line bundle $L$ on $A$ defines a morphism $\phi _L :A\to \hat{A}$ by the formula
$$\phi _L(a)=T^*_aL\otimes L^{-1}.$$
(Here $T_a:A\to A$ is the translation by $a$). We have $\phi _L=0$ iff $L\in \Pic ^0_A$, and $\phi _L$ is an isogeny if $L$ is ample. Thus the correspondence $L\mapsto \phi _L$ identifies the N\'eron-Severi group $NS_A:=\Pic _A/\Pic ^0_A$ as a subgroup in $\Hom (A,\hat{A})$. Also $NS_A$ is naturally a subgroup of $H^2(A,\bbZ)$: to a line bundle $L$ there corresponds its first Chern class, which can be considered as a skew-symmetric bilinear form on $\Gamma $. Put $c_1(L)=c$. Then the morphism $\phi _L$ is given by the map
$$V_A\to V_{\hat{A}},\quad v\mapsto c(v,\cdot).$$

We will identify $NS_A$ either as a subgroup of $\Hom (A,\hat{A})$ or $\Hom (\Gamma _A,\Gamma _{\hat{A}})$ or as a set of (intergal) skew-symmetric forms $c$ on $\Gamma _A$ such that the extension $c_{\bbR}$ on $V_A$ is $J$-invariant.

\subsubsection{}
Given a morphism of complex tori $f:A\to B$, the dual morphism $\hat{f} :\hat{B}\to \hat{A}$ is defined.

The double dual torus $\Hat{\Hat{A}}$ is naturally identified with $A$ by means of the Poincar\'e line bundle on $A\times \hat{A}$ and $\hat{A}\times \Hat{\Hat{A}}$. So given a morphism $f:A\to \hat{A}$ we will consider $\hat{f}:\Hat{\Hat{A}}\to \hat{A}$ as a morphism from $A$ to $\hat{A}$ again. Then for $L\in NS_A$ we have $\hat{\phi} _L=\phi _L$.

\subsubsection{} Consider the lattice $\Lambda =\Lambda _A:=\Gamma _A\oplus \Gamma ^*_A$ with the canonical symmetric bilinear form
$Q:\Lambda \times \Lambda \to \bbZ$ defines as follows
$$Q((a_1,b_1),(a_2,b_2))=b_1(a_2)+b_2(a_1).$$
Let $O(\Lambda ,Q)\subset GL(\Lambda)$ be the corresponding orthogonal group. It is equal to
$$O(\Lambda ,Q)=\left\{g=\left(\begin{array}{cc} a & b\\
c & d\end{array}\right)\in
\left(\begin{array}{ll}
\Hom(\Gamma ,\Gamma) & \Hom (\Gamma ^*,\Gamma )\\
\Hom (\Gamma ,\Gamma ^*) & \Hom (\Gamma ^*,\Gamma ^*)\end{array}\right) \Biggl|\ g^{-1}=\left(\begin{array}{rr}
\hat{d} & -\hat{b}\\
-\hat{c} & \hat{a}\end{array}\right)\right\}$$
where $\Gamma =\Gamma _A$.

Notice that if $A=(V/\Gamma _A,J_A)$ is a complex torus, then the complex structure $J_{A\times \hat{A}}$ of the product $A\times \hat{A}=(\Lambda _\bbR/\Lambda ,J_{A\times \hat{A}})$ belongs to the special orthogonal group $SO(\Lambda _\bbR, Q_\bbR)$.

\subsubsection{} A complex torus $A=(V/\Gamma ,J)$ is algebraic, i.e. an abelian variety, iff there exists $c\in NS_A$ such that the symmetric bilinear form $c_{\bbR}(J\cdot ,\cdot)$ on $V$ is positive definite. If a line bundle $L\in \Pic _A$ is ample then the induced map
$$\phi _L:\Gamma _{A,\bbQ}\to \Gamma _{\hat{A},\bbQ}$$
is an isomorphism.

We will only be interested in complex tori which are abelian varieties.

\subsection{Hodge group of an abelian variety} Let $W$ be a finite dimensional $\bbQ$-vector space and $J\in \End (W_\bbR)$ a complex structure, i.e. $J^2=-1$. This defines an embedding of $\bbR$-algebras $\bbC \subset \End (W_\bbR)$ and in particular an inclusion of groups $h:S^1 \hookrightarrow Aut (W_\bbR)$ such that $h(\sqrt{-1})=J$.

\begin{defn} \label{def of hodge gp} The {\rm Hodge group} of the complex structure $J$ is the smallest algebraic $\bbQ$-subgroup $H\subset Aut(W)$ such that $h(S^1)\subset H(\bbR)$. We denote it by $\langle J\rangle _\bbQ$.
If $A=\langle V_A/\Gamma _A,J_A\rangle $ is an abelian variety, the Hodge group $\langle J_A\rangle _\bbQ\subset Gl (\Gamma _{A,\bbQ})$ is also denoted by $Hdg _A$.
\end{defn}

\begin{remark} Since the Lie group $S^1$ is connected, so is the algebraic $\bbQ$-group $\langle J\rangle _\bbQ$.
\end{remark}

We have canonical identifications
$$Hdg _{A}=Hdg _{\hat{A}}=Hdg _{A\times \hat{A}}$$
Depending on the context we may view $Hdg _{A}$ as a subgroup of $Gl(\Gamma _{A,\bbQ})$ or $Gl(\Gamma _{\hat{A},\bbQ})$ or
$SO(\Lambda _\bbQ ,Q_\bbQ)$. (Indeed, by construction $J_{A\times \hat{A}}\in O(\Lambda _\bbR ,Q_\bbR)$ and so $Hdg_{A\times \hat{A}}\subset O(\Lambda _\bbQ ,Q_\bbQ)$; hence $Hdg_{A\times \hat{A}}\subset SO(\Lambda _\bbQ ,Q_\bbQ)$ because $Hdg_{A\times \hat{A}}$ is connected.)

\subsubsection{} The following facts about the group $Hdg _A$ are known (\cite{Mu2}, \cite{Del1})

\begin{thm} \label{facts about hodge} Assume that $A$ is an abelian variety.

(a) $Hdg _A$ is a connected reductive algebraic $\bbQ$-group without simple factors of exceptional type.

(b) The adjoint action of $J_A$ on the Lie group $Hdg _A(\bbR)^0$ is a Cartan involution, i.e. it is an involution whose fixed subgroup is a maximal compact subgroup $K$.

(c) The symmetric space $Hdg _A(\bbR)^0/K$ is of Hermitian type.
\end{thm}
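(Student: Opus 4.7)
The plan is to reduce all three assertions to the single input that $A$ admits a polarization, and then invoke Deligne's general theory of polarizable $\bbQ$-Hodge structures. A polarization of $A$ is a Riemann form $\psi$ on $V_{A,\bbQ}$: a symplectic form such that $\psi_\bbR(J_A\cdot,\cdot)$ is symmetric and positive definite. By construction $Hdg_A$ preserves $\psi$, so $Hdg_A\subset Sp(V_{A,\bbQ},\psi)$, and the restriction of $\psi$ gives all subsequent arguments something to latch onto.

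For (b) and the reductivity half of (a), I would argue that $Ad(J_A)$ is a Cartan involution on $Hdg_A(\bbR)^0$. The fixed subgroup of $Ad(J_A)$ consists of elements commuting with $J_A$, which therefore preserve both $\psi_\bbR$ and the positive definite form $\psi_\bbR(J_A\cdot,\cdot)$; such a subgroup is compact, giving (b). A real algebraic group admitting a Cartan involution is reductive, and connectedness of $Hdg_A$ is immediate from the connectedness of $S^1$, so this disposes of the easy half of (a). The ``no exceptional factors'' assertion is the deeper content: one analyzes the faithful representation of $Hdg_A$ on $V_{A,\bbQ}$, which carries a weight-$1$ Hodge structure, and invokes Deligne's classification of simple $\bbQ$-algebraic groups admitting a faithful Hodge representation of that type. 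This excludes $G_2$, $F_4$, $E_8$ factors (which have no minuscule representations of the required form) and refines the analysis for $E_6$, $E_7$; the details are in \cite{Del1}, \cite{Mu2}.

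For (c), I would identify the symmetric space $X = Hdg_A(\bbR)^0/K$ with the orbit of $J_A$ under conjugation by $Hdg_A(\bbR)^0$, sitting inside the Siegel upper half space $\mathfrak{H}_n = Sp(V_{A,\bbR},\psi)/U(n)$ as a totally geodesic holomorphic submanifold. Since $\mathfrak{H}_n$ is Hermitian symmetric and the embedding is equivariant with respect to the complex structures induced by conjugation by $J_A$, $X$ inherits a Hermitian symmetric structure. Equivalently, one interprets $X$ as the moduli space of $\bbQ$-Hodge structures of weight $1$ on $V_{A,\bbQ}$ polarized by $\psi$ and having Hodge group contained in $Hdg_A$, which is automatically of Hermitian type.

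The main obstacle in this program is the ``no exceptional factors'' assertion: reductivity, connectedness, the Cartan-involution statement, and the Hermitian-type property are all essentially formal consequences of the existence of a polarization, but eliminating the exceptional simple factors requires Deligne's detailed classification of Hodge representations via a case-by-case root-system analysis, which is not formal and constitutes the genuine content of the theorem.
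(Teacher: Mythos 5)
The paper offers no proof of this statement: it is quoted as a known result with references to \cite{Mu2} and \cite{Del1}, so there is no internal argument to compare yours against. Your outline follows the standard treatment in exactly those sources --- connectedness of $Hdg_A$ from connectedness of $S^1$, the Cartan involution and reductivity from the existence of a Riemann form, the Hermitian structure from realizing $Hdg_A(\bbR)^0/K$ as the conjugation orbit of $J_A$ inside the Siegel upper half space, and the exclusion of exceptional simple factors from Deligne's classification of the weight-one Hodge representations --- and you are right that only the last item is non-formal. So the proposal is consistent with what the paper (implicitly) relies on.

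One logical slip worth fixing in the reductive/Cartan step: compactness of the fixed subgroup of an involution does not by itself imply that a group is reductive (the involution $x\mapsto -x$ of $\bbG_a$ has trivial, hence compact, fixed group). Your observation that the centralizer of $J_A$ in $Hdg_A(\bbR)$ preserves both $\psi_\bbR$ and the positive definite form $\psi_\bbR(J_A\cdot,\cdot)$, and so is compact, is correct, but the standard argument runs the other way: one first shows that the twisted real form $\{g\in Hdg_A(\bbC)\mid Ad(J_A)(\bar g)=g\}$ is compact because it preserves the positive definite Hermitian form $\psi_\bbR(x,J_A\bar y)$ on $V_{A,\bbC}$; a connected $\bbQ$-group whose complexification admits a compact real form is reductive, and only then does one conclude that $Ad(J_A)$ is a Cartan involution whose fixed subgroup is a \emph{maximal} compact subgroup. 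This is precisely the argument in \cite{Del1}, so the repair is routine, but as written your deduction of reductivity is incomplete.
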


\subsection{Derived category of an abelian variety}

Let $A$ be an abelian variety. In this case the action of the group $AutD^b(A)$ preserves the {\it integral} cohomology of $A$, i.e. we have the homomorphism
$$\rho _A:AutD^b(A)\to Gl(H^\bullet (A,\bbZ))$$
(In [GLO] the image of $\rho _A$ is denoted $Spin(A)$, but here we denote it $G^{eq}(A)$.) This group tends to be big and there exists a precise description of this group in terms of the {\it Mukai-Polishchuk} group $U(A)$. Let us recall it.

\begin{defn} For an abelian variety $A$ put
$$U(A)=\left\{g=\left(\begin{array}{cc} a & b\\
c & d\end{array}\right)\in
\left(\begin{array}{ll}
\End(A) & \Hom (\hat{A},A)\\
\Hom (A,\hat{A}) & \End (\hat{A})\end{array}\right) \Biggl|\ g^{-1}=\left(\begin{array}{rr}
\hat{d} & -\hat{b}\\
-\hat{c} & \hat{a}\end{array}\right)\right\}$$
\end{defn}

So by definition we have $U(A)=Aut(A\times \hat{A})\cap O(\Lambda ,Q)$, which also equals $Aut(A\times \hat{A})\cap SO(\Lambda ,Q))$, because elements of $Aut(A\times \hat{A})$ have positive determinant (as they preserve the complex structure on $V_A\oplus V_{\hat{A}}$).

\subsubsection{} For us the group $U(A)$ is important because of the following facts.

\begin{prop} There exists a natural exact sequence of groups
$$0\to \bbZ \times A\times \hat{A}\to Auteq(D^b(A))\to U(A)\to 1$$
\end{prop}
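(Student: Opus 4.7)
The plan is to follow the Orlov--Polishchuk approach, based on Orlov's representability theorem for Fourier-Mukai transforms. Given $\Phi\in \operatorname{Auteq}(D^b(A))$, there is a unique kernel $E_\Phi\in D^b(A\times A)$ with $\Phi=\Phi_{E_\Phi}$, so the strategy is to read off a homomorphism $\gamma:\operatorname{Auteq}(D^b(A))\to U(A)$ from the cohomological action, show its kernel is $\bbZ\times A\times \hat A$, and establish surjectivity by exhibiting preimages of natural generators of $U(A)$.

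For the construction of $\gamma$: since $\mathrm{td}_A=1$, the Mukai vector coincides with the Chern character, and $\rho_A(\Phi)$ preserves the integer lattice $H^\bullet(A,\bbZ)$. Restricting the action to $H^1(A,\bbZ)\oplus H^1(\hat A,\bbZ)\simeq \Gamma_A\oplus \Gamma_A^*=\Lambda$ (using Poincar\'e duality and the canonical identification $H^1(\hat A,\bbZ)=\Gamma_A$) gives an isometry of $(\Lambda,Q)$. Mukai's analysis of semi-homogeneous sheaves then shows that this isometry is realized by an automorphism of the abelian variety $A\times \hat A$, and thus lies in $U(A)\subset SO(\Lambda,Q)$. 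To identify the kernel of $\gamma$, one observes that three families of autoequivalences act trivially on $H^1$: shifts $[n]$ for $n\in\bbZ$; translations $T_a^*$ for $a\in A$; and tensor products with line bundles $L\in\operatorname{Pic}^0 A\simeq \hat A$. These commute up to natural isomorphism and generate a copy of $\bbZ\times A\times \hat A$ in $\operatorname{Auteq}(D^b(A))$. Conversely, if $\gamma(\Phi)=1$ then $E_\Phi$ must be set-theoretically supported on a translate of the diagonal in $A\times A$; a direct analysis then forces $E_\Phi\simeq (T_a\times \operatorname{id})^*\Delta_*L[n]$ for some $(n,a,L)\in\bbZ\times A\times \hat A$, giving the reverse inclusion.

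For surjectivity one lifts a generating set of $U(A)$: the block anti-diagonal element comes from the Fourier-Mukai equivalence $D^b(A)\simeq D^b(\hat A)$ furnished by the Poincar\'e bundle (Mukai); block-diagonal elements $\mathrm{diag}(a,\hat a^{-1})$ with $a\in \operatorname{Aut}(A)$ come from pullback $a^*$; and upper-triangular shear elements with prescribed $c\in NS_A$ come from tensoring with a line bundle $L$ having $c_1(L)=c$. A short argument following Polishchuk shows that these three classes generate $U(A)$. The main obstacle is the construction step where the integral lattice isometry on $\Lambda$ induced by $\rho_A(\Phi)$ is upgraded to an honest morphism of abelian varieties $A\times \hat A\to A\times \hat A$; this is precisely the content of Mukai's semi-homogeneous sheaf theorem, and is what prevents the proof from being purely formal.
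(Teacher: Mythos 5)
A preliminary remark: the paper does not prove this proposition at all; it is quoted as a known theorem of Orlov--Polishchuk (see [Or2] and [GLO] in the bibliography), so there is no internal proof to compare with, and I can only measure your sketch against the standard argument. Your overall architecture (Orlov representability, a homomorphism $\gamma$ to $U(A)$, kernel $\bbZ\times A\times\hat A$, surjectivity via Mukai) is the right one, but the two load-bearing steps as you describe them would fail. First, the construction of $\gamma$: the cohomological action $\rho_A(\Phi)$ on $H^\bullet(A,\bbZ)=\Lambda^\bullet\Gamma_A^*$ does not preserve any subspace isomorphic to $\Gamma_A\oplus\Gamma_A^*$ --- already tensoring with an ample line bundle sends $H^1$ into $H^1\oplus H^3\oplus\cdots$ --- so ``restricting the action to $H^1(A,\bbZ)\oplus H^1(\hat A,\bbZ)$'' is not a well-defined homomorphism. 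The correct definition is intrinsic to the derived category: for $(a,\alpha)\in A\times\hat A$ the conjugate $\Phi\circ\bigl(T_a^*(-)\otimes P_\alpha\bigr)\circ\Phi^{-1}$ is again of this form up to shift, and the assignment $(a,\alpha)\mapsto(b,\beta)$ is automatically a morphism of abelian varieties, which one then checks lies in $U(A)$. No ``upgrading of a lattice isometry to a morphism'' is needed, and semi-homogeneous sheaves play no role at this stage; the relation to the cohomological action goes through the spin representation of $Spin(\Lambda,Q)$ on $\Lambda^\bullet\Gamma_A^*$ and is the content of the \emph{second} exact sequence in the paper, not an input to this one. With the intrinsic definition your kernel computation does go through.

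Second, surjectivity. The claim that $U(A)$ is generated by a Weyl-type element, the elements $\mathrm{diag}(a,\hat a^{-1})$, and the unipotent shears coming from line bundles is not a ``short argument'': for a general $A$ the Poincar\'e transform is an equivalence $D^b(A)\simeq D^b(\hat A)$ rather than an autoequivalence, so the anti-diagonal element does not even exist in $U(A)$ unless suitable isogenies $\hat A\to A$ are available; and the assertion that the elementary elements generate the arithmetic group $U(A)$ is a bounded-generation type statement that is not known in this generality and is not how Orlov or Polishchuk argue. The actual proof of surjectivity constructs, for each $g=\left(\begin{smallmatrix} a& b\\ c& d\end{smallmatrix}\right)\in U(A)$ whose block $b\in\Hom(\hat A,A)$ is an isogeny, a semi-homogeneous complex on $A\times A$ whose Fourier--Mukai functor is an equivalence mapping to $g$, and then proves a decomposition lemma showing every element of $U(A)$ is a product of finitely many such elements. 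That is where Mukai's theory of semi-homogeneous sheaves enters and it is the genuinely non-formal part of the theorem; your sketch relocates it to the wrong step and replaces it, at the step where it is needed, by an unsubstantiated group-theoretic claim.
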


The homomorphism $\rho _A:Aut (D^b(A))\to Gl(H^\bullet (A,\bbZ))$ almost factors through the group $U(A)$. Namely, we have the exact sequence of groups
$$0\to \bbZ /2\bbZ \to G^{eq}(A)\to U(A)\to 1$$

\begin{remark} \label{remark commensur} It follows that the groups $G^{eq}(A)$ and $U(A)$ are isomorphic up to finite groups.
\end{remark}

As explained in \cite{GLO} the group $SO(\Lambda ,Q)$ does not act on the space
$H^\bullet (A,\bbZ)$, but its double cover does. Namely, there is a discrete group $Spin(\Lambda ,Q)$ and an exact sequence
$$0\to \bbZ /2\bbZ \to Spin (\Lambda ,Q)\to SO(\Lambda ,Q)$$
The group $Spin (\Lambda ,Q)$ acts on $H^\bullet (A,\bbZ)=\Lambda ^\bullet \Gamma ^* _A$ via the {\it spinorial} representation. Moreover we have the commutative diagram
$$\begin{array}{ccc}
G^{eq}(A)& \hookrightarrow & Spin (\Lambda ,Q)\\
\downarrow & & \downarrow \\
U(A) & \hookrightarrow & SO(\Lambda ,Q)
\end{array}
$$
and the action of $G^{eq}(A)$ on $H^\bullet (A,\bbZ)$ is the restriction of the spinorial representation of $Spin (\Lambda ,Q)$.

\subsection{The algebraic $\bbQ$-group $\U _{A,\bbQ}$}

Let $A$ be an abelian variety and let $Aut _{\bbQ}(A\times \hat{A})$ be the group of invertible elements in $\End ^0(A\times \hat{A}):=\End (A\times \hat{A})\otimes \bbQ$. Define the algebraic $\bbQ$-group $\U _{A,\bbQ}$ as follows
$$\U _{A,\bbQ}=\left\{g=\left(\begin{array}{cc} a & b\\
c & d\end{array}\right)\in
Aut _{\bbQ}(A\times \hat{A})\ \Biggl|\ g^{-1}=\left(\begin{array}{rr}
\hat{d} & -\hat{b}\\
-\hat{c} & \hat{a}\end{array}\right)\right\}$$
So $\U _{A,\bbQ}=Aut _{\bbQ}(A\times \hat{A})\cap O(\Lambda _\bbQ,Q_\bbQ)=Aut _{\bbQ}(A\times \hat{A})\cap SO(\Lambda _\bbQ,Q_\bbQ)$ and $U(A)$ is the arithmetic subgroup of $\U _{A,\bbQ}$ consisting of elements that preserve the lattice $\Lambda$.

\begin{remark} \label{centralizer} Note that the algebraic $\bbQ$-group $\U _{A,\bbQ}$ is the centralizer in $SO(\Lambda _\bbQ,Q_\bbQ)$ of the group $Hdg _A$.
\end{remark}

The group $\U_{A,\bbQ}$ was introduced and studied in \cite{Pol}.

\begin{thm} \label{list of prop of nonconn} \cite{Pol} For an abelian variety $A$ the group $\U _{A,\bbQ}$ is reductive.
\end{thm}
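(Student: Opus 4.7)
The plan is to exploit Remark \ref{centralizer}, which identifies $\U_{A,\bbQ}$ with the centralizer $Z_{SO(\Lambda_{\bbQ},Q_{\bbQ})}(Hdg_A)$. Since $Hdg_A$ is reductive by Theorem \ref{facts about hodge}(a), and $SO(\Lambda_{\bbQ},Q_{\bbQ})$ is reductive, the statement reduces to the classical fact that in characteristic zero the centralizer of a reductive subgroup of a reductive algebraic group is reductive (a theorem of Richardson, which ultimately rests on Mostow's theorem on fully reducible subgroups). This general fact applied in our situation completes the proof.

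For the reader who wants a concrete model of the argument, I would unfold it as follows. Decompose the $Hdg_A$-module $\Lambda_{\bbQ}$ over $\bbQ$ into isotypic components $\Lambda_{\bbQ}=\bigoplus_i M_i\otimes_{D_i} V_i$, where the $V_i$ are pairwise non-isomorphic absolutely irreducible $Hdg_A$-representations, $D_i:=\End_{Hdg_A}(V_i)$ is a finite-dimensional division algebra over $\bbQ$, and $M_i$ is the multiplicity space. By Schur's lemma the centralizer of $Hdg_A$ in $GL(\Lambda_{\bbQ})$ equals $\prod_i GL_{D_i}(M_i)$, which is visibly reductive. A second application of Schur's lemma to the $Hdg_A$-invariant form $Q$ shows that $Q$ vanishes on $M_i\otimes V_i$ paired with $M_j\otimes V_j$ unless $V_j\simeq V_i^{\vee}$; accordingly $Q$ either induces a nondegenerate form on a single isotypic piece (when $V_i\simeq V_i^{\vee}$) or a perfect duality between two pieces. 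Imposing preservation of $Q$ cuts out from $\prod_i GL_{D_i}(M_i)$ a product of classical groups (unitary/orthogonal/symplectic type over $D_i$, depending on the type of the Hodge representation), each of which is reductive; their product is precisely $\U_{A,\bbQ}$.

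The main obstacle is purely bookkeeping: sorting out which classical group arises from each isotypic component, and doing so compatibly with the $\bbQ$-structure and with the three flavors (real, complex, quaternionic) of Hodge representations of abelian-variety Hodge groups. This is exactly the classification carried out by Polishchuk in \cite{Pol}, where the structure of $\U_{A,\bbQ}$ is described case by case. For the qualitative claim of reductivity asserted here, however, the classification is not needed: one may invoke the general centralizer theorem directly and conclude in one line.
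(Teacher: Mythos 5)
The paper offers no proof of this statement: it is quoted from Polishchuk's thesis \cite{Pol}, so there is nothing in the text to compare your argument against line by line. Your proof is correct and self-contained modulo standard facts. The reduction via Remark \ref{centralizer} is legitimate (that remark is stated in the paper before the theorem, so there is no circularity), and both versions of your argument go through: the one-line appeal to the theorem that the centralizer of a (linearly) reductive subgroup of a reductive group in characteristic zero is reductive, and the unfolded version, in which $\End_{Hdg_A}(\Lambda_\bbQ)$ is a semisimple $\bbQ$-algebra by the double centralizer theorem, the $Hdg_A$-invariant form $Q$ induces an anti-involution $*$ on it ($Q$-adjoint), and $\U_{A,\bbQ}=\{a\ \vert\ a^*a=1\}$ is the unitary group of a semisimple algebra with involution, hence reductive. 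It is worth noting that this is in substance the same mechanism as the direct argument one would extract from \cite{Pol}: since $\End_{Hdg_A}(\Lambda_\bbQ)=\End^0(A\times\hat A)$ (Hodge classes in degree $(1,1)$ of $\End$ are endomorphisms), which is semisimple by Poincar\'e reducibility, and the defining condition $g^{-1}=\bigl(\begin{smallmatrix}\hat d & -\hat b\\ -\hat c & \hat a\end{smallmatrix}\bigr)$ is precisely $g^*g=1$ for that involution, your Hodge-theoretic packaging and the endomorphism-algebra computation differ only in where semisimplicity of the relevant algebra is sourced (reductivity of $Hdg_A$ versus Poincar\'e reducibility). The only inputs you take on faith from the paper are Remark \ref{centralizer} and Theorem \ref{facts about hodge}(a), both of which precede the statement, so the argument stands.
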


\subsection{The algebraic $\bbQ$-group $U(A)_\bbQ$} It will be convenient for us to consider a slightly smaller algebraic $\bbQ$-group. Namely, first consider the $\bbQ$-Zariski closure in
$\U _{A,\bbQ}$ of its arithmetic subgroup $U(A)$. (In \cite{GLO} this group was denoted $\overline{U(A)}$.) Let $U(A)_\bbQ :=\overline{U(A)}^0$ be its connected component. This is the algebraic $\bbQ$-group, that we will be interested in.

The main properties of this group are summarized in the following proposition.

\begin{prop} Let $A$ be an abelian variety.

(1) The group $U(A)_\bbQ$ is semisimple.

(2) The semi-simple Lie group $U(A)_\bbQ (\bbR)^0$ consists of all the non-compact factors of the reductive Lie group $\U _{A,\bbQ}(\bbR)^0$.

(3) The arithmetic subgroup $U(A)^0:=U(A)\cap U(A)_\bbQ (\bbR)^0$ of
$U(A)_\bbQ (\bbR)^0$ is Zariski dense.
\end{prop}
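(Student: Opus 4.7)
The three parts are established together by combining Polishchuk's structure theorem (Theorem \ref{list of prop of nonconn}) with Borel density and basic facts about arithmetic subgroups of $\bbQ$-tori. Set $G := \U_{A,\bbQ}$. The first step is to observe that $U(A)$ is an arithmetic subgroup of $G$: by definition $U(A) = G(\bbQ) \cap \Aut(\Lambda)$, the stabilizer of the $\bbZ$-lattice $\Lambda = \Gamma_A \oplus \Gamma_A^*$, which is the set of $\bbZ$-points of $G$ for the integral structure inherited from $\Lambda$.

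Part (3) is then essentially formal. Since $U(A)_\bbQ = \overline{U(A)}^0$ by definition, $U(A) \cap U(A)_\bbQ$ has finite index in $U(A)$, and its real identity component $U(A)^0 = U(A)\cap U(A)_\bbQ(\bbR)^0$ is again a finite-index subgroup of $U(A)$ with the same $\bbQ$-Zariski closure, namely $U(A)_\bbQ$. Hence $U(A)^0$ is Zariski dense in $U(A)_\bbQ$, and in particular in the connected real Lie group $U(A)_\bbQ(\bbR)^0$.

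For parts (1) and (2), write $G$ as an almost direct product $G = Z \cdot G^{\mathrm{der}}$ with $Z = Z(G)^0$ a $\bbQ$-torus and $G^{\mathrm{der}}$ semisimple, and further decompose $G^{\mathrm{der}}(\bbR)^0 = G_c \cdot G_{nc}$ into the compact simple factors and the non-compact simple factors. The strategy is to show: (a) $Z(\bbR)^0$ is compact, so any arithmetic subgroup of $Z$ is finite; (b) arithmetic subgroups of $G_c$ are finite, being discrete subgroups of a compact Lie group; (c) by Borel density, the image of $U(A)^0$ in $G_{nc}$ is Zariski dense, and each $\bbQ$-simple factor of $G^{\mathrm{der}}$ is uniformly compact or uniformly non-compact at infinity, so that $G_{nc}$ is in fact a $\bbQ$-subgroup. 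Assembling (a), (b), (c) shows that the Zariski closure of $U(A)^0$ inside $G(\bbR)^0$ is exactly $G_{nc}$, simultaneously yielding $U(A)_\bbQ(\bbR)^0 = G_{nc}$ (part 2) and the semisimplicity of $U(A)_\bbQ$ (part 1).

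The main obstacle is (a): proving the $\bbQ$-torus $Z$ is $\bbR$-anisotropic. Since $\U_{A,\bbQ}$ is the centralizer of $Hdg_A$ in $SO(\Lambda_\bbQ, Q_\bbQ)$ (Remark \ref{centralizer}), every element of $Z$ commutes with both $\U_{A,\bbQ}$ and $Hdg_A$; in particular $Z(\bbR)^0$ centralizes the complex structure $J_{A\times \hat A} \in Hdg_A(\bbR)$. Combined with the orthogonality constraint inside $SO(\Lambda_\bbR, Q_\bbR)$, this should force $Z(\bbR)^0$ into a unitary-type compact subgroup, roughly because elements commuting with both $J_{A\times\hat A}$ and a polarization form a unitary group. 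Carrying this out rigorously, together with the $\bbQ$-uniformity at infinity used in (c), requires Polishchuk's detailed description of $\U_{A,\bbQ}$ in \cite{Pol}.
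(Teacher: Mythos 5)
The paper offers essentially no argument to compare with: it proves (1) and (2) by citing \cite[5.3.5]{GLO} and \cite[7.2.1]{GLO}, and (3) by citing \cite[Thm.1]{Bor}. Your part (3) is correct and, as you observe, essentially formal: $U(A)_\bbQ$ is by definition the identity component of the Zariski closure of $U(A)$, and a finite-index subgroup such as $U(A)^0$ has Zariski closure of finite index in $\overline{U(A)}$, hence containing $\overline{U(A)}^0=U(A)_\bbQ$. This is cleaner than an appeal to Borel density, which is not actually needed for this part.

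For (1) and (2) your skeleton is the right one, but the two statements carrying all the content are exactly the ones you do not prove. First, the $\bbR$-anisotropy of the central torus $Z$: your heuristic is insufficient as stated, because the entire non-compact group $\U_{A,\bbQ}(\bbR)$ commutes with $J_{A\times\hat{A}}$ and sits inside $SO(\Lambda_\bbR,Q_\bbR)$; commuting with $J_{A\times\hat{A}}$ plus orthogonality only places you in an indefinite unitary group. The correct mechanism is that $Z(\bbR)$ centralizes some $I_\omega\in\U_{A,\bbQ}(\bbR)$ and therefore preserves the positive definite form $Q(I_\omega J_{A\times\hat{A}}(-),-)$ of Lemma \ref{useful lemma}; equivalently, $Z(\bbR)$ lies in the compact stabilizer $K_\omega$ of Theorem \ref{action on z domain}(2). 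Second, and more seriously, the claim that every $\bbQ$-simple factor of the derived group is either compact or without compact factors over $\bbR$ is false for general reductive $\bbQ$-groups: an orthogonal group of a ternary form over a real quadratic field can have real points $SO(2,1)\times SO(3)$, and its arithmetic subgroup is then Zariski dense in the whole $\bbQ$-simple group, so the Zariski closure of the arithmetic group would contain a compact factor. Ruling this out for $\U_{A,\bbQ}$ requires the explicit list of its simple factors coming from the Albert classification and the positivity of the Rosati involution, which is precisely the content of \cite[7.2.1]{GLO} and \cite{Pol} that the paper cites. So your proposal does prove (3), but for (1)--(2) it ultimately rests on the same external references as the paper, and the one new argument you sketch (compactness of $Z(\bbR)$) would not go through in the form you give it.
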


\begin{proof}  (1) \cite[5.3.5]{GLO}, (2) \cite[7.2.1]{GLO},
(3) \cite[Thm.1]{Bor}.
\end{proof}

\begin{remark} \label{remark commeas} The subgroup $U(A)^0\subset U(A)$ has finite index. Since the groups $U(A)$ and $G^{eq}(A)$ are isomorphic up to finite groups, so are $U(A)^0$ and $G^{eq}(A)$ (Remark \ref{remark commensur}).
\end{remark}

\begin{remark} \label{coincidence of groups} Let $A$ and $B$ be abelian varieties with an identification of lattices $\Lambda _A=\Lambda _B=\Lambda$ which is compatible with the form $Q$. Assume that under this identification we have the equality of algebraic groups $\U _{A,\bbQ}=\U _{B,\bbQ}$. Then clearly $U(A)=U(B)$ and hence also $U(A)_\bbQ=U(B)_\bbQ$, $U(A)^0=U(B)^0$, etc.. (The equality
$\U _{A,\bbQ}=\U _{B,\bbQ}$ holds for example when $Hdg _A=Hdg _B$ as subgroups in $SO(\Lambda _\bbQ,Q_\bbQ)$.)
\end{remark}

\subsection{Action of the Lie group $\U _{A,\bbQ}(\bbR)$ on a Siegel domain}
Let $A$ be an abelian variety. Let us define a rational (i.e. not everywhere defined) action of the Lie group $\U _{A,\bbQ}(\bbR)$ on the complex space $NS_{A, \bbC} \subset \Hom (A,\hat{A})\otimes \bbC$:
$$\begin{array}{cc}
\left(\begin{array}{cc}
a & b\\
c & d \end{array}\right)\omega :=(c+d\omega)(a+b\omega)^{-1},\\
\left(\begin{array}{cc}
a & b\\
c & d \end{array}\right)\in \U _{A,\bbQ}(\bbR),\quad \omega \in NS_{A,\bbC}\subset \Hom (A,\hat{A})\otimes \bbC.
\end{array}
$$
Here the multiplication is understood as composition of maps.

$NS_{A,\bbC}$ contains a Siegel domain of the first kind \cite{Pjat} on which this action is well defined. Namely, let $C_A^a\subset NS_{A,\bbR}$ be the ample cone of $A$, which is defined as the set of $\bbR ^+$-linear combinations of ample classes in $NS_A$. It is an open subset in $NS_{A,\bbR}$. Consider the complexified ample cone
$$
C_A:=NS_{A,\bbR}+ iC^a_A\subset NS_{A,\bbC}
$$
(Note that in \cite{GLO} $C_A$ denotes the bigger set $NS_{A,\bbR}\pm iC^a_A$ which has two connected components.)

\begin{thm} \label{action on z domain} Let $A$ be an abelian variety.

(1) The action of $\U _{A,\bbQ}(\bbR)$ on $C_A$ is well defined and is transitive.

(2) The stabilizer   of a point in $C_A$ is a maximal compact subgroup in $\U _{A,\bbQ}(\bbR)$.

(3) The action of the subgroup $U(A)_\bbQ (\bbR)^0\subset \U _{A,\bbQ}(\bbR)$ on $C_A$ is also transitive and the stabilizer of a point is a maximal compact subgroup of $U(A)_\bbQ (\bbR)^0$. Hence $C_A$ is the  Hermitian symmetric space for the semi-simple Lie group $U(A)_\bbQ (\bbR)^0$.
\end{thm}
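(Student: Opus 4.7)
The plan is to model the argument on the classical Siegel theory for $Sp_{2g}(\bbR)$ acting on the Siegel upper half-space, treating $\U_{A,\bbQ}$ as the appropriate generalization adapted to the abelian variety $A \times \hat{A}$. The defining relations of $\U_{A,\bbQ}$ (involving the hat/dualization conditions) play the role of the classical symplectic relations, and the complex structure on $V_A \oplus V_{\hat{A}}$ couples the orthogonal structure with a complex polarization so that $C_A$ behaves as a tube domain over a homogeneous cone (the ample cone $C_A^a$, shown to be homogeneous in \cite{GLO}).

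I would first establish part~(1) by verifying three things in order. Well-definedness: for $\omega = \omega' + i\omega'' \in C_A$ with $\omega'' \in C_A^a$, and $g = \bigl(\begin{smallmatrix} a & b \\ c & d\end{smallmatrix}\bigr) \in \U_{A,\bbQ}(\bbR)$, show that $a + b\omega \in \Hom(\Gamma_{A,\bbC}, \Gamma_{A,\bbC})$ is invertible. This follows by computing $(\widehat{a+b\bar\omega})(a+b\omega) = \hat{a}a + \hat{b}b\omega\bar\omega + \hat{a}b\omega + \hat{b}a\bar\omega$ and using the relations $\hat{a}b = \hat{b}a + \mathrm{sym}$ implicit in $g \in \U_{A,\bbQ}$ to reduce the imaginary part to something like $(\widehat{a+b\omega'})\,b\omega'' - \hat{b}\omega''(a+b\omega')$, which combined with the positivity of $\omega''$ gives a nondegenerate Hermitian form. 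Next, that $g\omega := (c+d\omega)(a+b\omega)^{-1}$ is symmetric (hence lies in $NS_{A,\bbC}$) follows from the identity $\hat{a}c = \hat{c}a$ and $\hat{b}d = \hat{d}b$. Finally, to see the action preserves $C_A$, derive the transformation formula
\begin{equation*}
\mathrm{Im}(g\omega) = \bigl((a+b\bar\omega)^{-1}\bigr)^{\wedge}\, \mathrm{Im}(\omega)\, (a+b\omega)^{-1},
\end{equation*}
which is manifestly positive-definite when $\mathrm{Im}(\omega)$ is. For transitivity, use translations $\bigl(\begin{smallmatrix} 1 & 0 \\ n & 1 \end{smallmatrix}\bigr)$ with $n \in NS_{A,\bbR}$, which act by $\omega \mapsto \omega + n$ and give transitivity on the real part, combined with the transitivity of $\U_{A,\bbQ}(\bbR)^0$ on the cone $C_A^a$ from \cite{GLO} to adjust the imaginary part.

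For part~(2), once transitivity is established, fix a base point $i\kappa$ with $\kappa \in C_A^a$. The stabilizer preserves the invariant Kähler/Bergman metric on $C_A$, so it embeds into a unitary group and is thus compact; maximality is automatic from the Iwasawa-type decomposition implicit in the transitive action of the real reductive group $\U_{A,\bbQ}(\bbR)$ on the homogeneous space $C_A$ of the right dimension. For part~(3), apply the earlier proposition identifying $U(A)_\bbQ(\bbR)^0$ with the product of all noncompact factors of $\U_{A,\bbQ}(\bbR)^0$: the compact factors act trivially on the Hermitian symmetric space structure on $C_A$, so transitivity of $U(A)_\bbQ(\bbR)^0$ on $C_A$ follows from that of $\U_{A,\bbQ}(\bbR)^0$, and the stabilizer statement is inherited. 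The main obstacle I expect is Step~1: carefully unpacking the nonstandard relations defining $\U_{A,\bbQ}$ (which involve the dual isogeny operation $\hat{\cdot}$ rather than matrix transposition) to verify both invertibility and symmetry, and then matching the resulting formula for $\mathrm{Im}(g\omega)$ with the positivity condition defining $C_A$.
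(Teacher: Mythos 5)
The paper contains no proof of this theorem: the proof body is literally the citation \cite[8.2, 8.3]{GLO}, so there is no internal argument to compare against. Your reconstruction follows the classical Siegel-upper-half-space template (invertibility of $a+b\omega$, symmetry of $(c+d\omega)(a+b\omega)^{-1}$, the transformation law for the imaginary part, transitivity from translations by $NS_{A,\bbR}$ together with homogeneity of the cone $C_A^a$, then compactness and maximality of stabilizers), and this is in substance the route taken in \cite{GLO} -- the paper itself signals this by describing $C_A$ as a Siegel domain of the first kind in the sense of \cite{Pjat}. So the approach is the right one and, modulo the bookkeeping you flag, it goes through.

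Two places where the sketch as written needs repair. First, the displayed quantity $(\widehat{a+b\bar\omega})(a+b\omega)$ does not typecheck: $a+b\omega$ maps $A\to A$ while its dual maps $\hat A\to\hat A$, so they cannot be composed, and in any case $\mathrm{Im}(\omega)$ is a \emph{skew} form on $\Gamma_A$, so the naive positivity argument from the symmetric-matrix case does not transfer verbatim. The identity you actually need is
$$(\widehat{c+d\bar\omega})(a+b\omega)-(\widehat{a+b\bar\omega})(c+d\omega)=\bar\omega-\omega ,$$
which follows from $\hat a c=\hat c a$, $\hat b d=\hat d b$, $\hat a d-\hat c b=1$; pairing it against a putative kernel vector of $a+b\omega$ gives invertibility (using that $\omega''\in C^a_A$, i.e.\ that the associated Hermitian form is positive), and substituting $c+d\omega=(g\omega)(a+b\omega)$ gives exactly your formula for $\mathrm{Im}(g\omega)$. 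Second, ``maximality is automatic from the Iwasawa-type decomposition'' is too quick: you should either use that $C_A=NS_{A,\bbR}+iC^a_A$ is convex, hence contractible, so a compact stabilizer with contractible quotient is maximal compact, or identify the stabilizer of $\omega$ as the fixed group of the Cartan involution $\mathrm{Ad}(I_\omega)$ with $I_\omega$ as in \eqref{formula for iomega} -- which is precisely the content of Proposition \ref{properties of action}, likewise imported from \cite{GLO}. With those two points made precise, part (3) follows as you say, since the compact factors of $\U_{A,\bbQ}(\bbR)^0$ lie in every stabilizer and hence act trivially.
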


\begin{proof} \cite[8.2, 8.3]{GLO}.
\end{proof}

\subsection{Mirror symmetry for algebraic pairs $(A,\omega)$ following [GLO]}

\begin{defn} \label{def of alg pair} An {\bfseries algebraic pair} is a pair $(A,\omega)$, where $A$ is an abelian variety and $\omega \in C_A$.
\end{defn}

Let us recall the notion of mirror symmetry for algebraic pairs from [GLO].
Consider the $\U _{A,\bbQ}(\bbR)$-action on $C_A$ defined above. Given $\omega =\phi _1+i\phi _2\in C_A$, define
\begin{equation}\label{formula for iomega} \begin{array}{rcl}
I_\omega & := & \left(\begin{array}{cc}
\phi ^{-1}_2\phi _1 & -\phi _2^{-1}\\
\phi _2+\phi _1\phi _2^{-1}\phi _1 & -\phi _1\phi _2^{-1}
\end{array}\right)\\
 & = & \left(\begin{array}{cc}1 & 0\\\phi _1 & 1\end{array}\right)
 \left(\begin{array}{cc}0 & -\phi _2^{-1}\\\phi _2 & 0\end{array}\right)
 \left(\begin{array}{cc}1 & 0\\-\phi _1 & 1\end{array}\right)\in \U _{A,\bbQ}(\bbR)
 \end{array}
\end{equation}

The following proposition together with Theorem \ref{action on z domain} should be compared with Theorem \ref{facts about hodge}.

\begin{prop} \label{properties of action} Consider the $U(A)_\bbQ(\bbR)^0$-action on $C_A$
(Theorem \ref{action on z domain}).
Let $\omega \in C_A$ and let $K_\omega \subset U(A)_\bbQ(\bbR)^0$ be its stabilizer. Then the following holds.

(1) The operator $I_\omega$ belongs to the center of $K_\omega$ (in particular $I_\omega \in U(A)_\bbQ(\bbR)^0$) and the adjoint action of $I_\omega$ on $U(A)_\bbQ (\bbR)$ is the Cartan involution corresponding to $K_\omega$.

(2) We have $I_\omega ^2=-1$, hence $I_\omega$ defines a complex structure on $\Lambda _\bbR$.

(3) The correspondence $\omega \mapsto I_\omega $ is injective.
\end{prop}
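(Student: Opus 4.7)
My plan is to treat (2) and (3) as direct computations using the explicit formula for $I_\omega$, and then to handle (1) by combining a direct check that $I_\omega$ fixes $\omega$ with the Hermitian symmetric space structure on $C_A$ supplied by Theorem \ref{action on z domain}.

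For (2), I would exploit the factorization $I_\omega = M N M^{-1}$ displayed in \eqref{formula for iomega}: the middle factor $N$ satisfies $N^2 = -I$ by a one-line block multiplication, so $I_\omega^2 = M N^2 M^{-1} = -I$, which gives a complex structure on $\Lambda_\bbR$. For (3), I would recover $\omega$ from the entries of $I_\omega$: the upper-right block gives $\phi_2 = -b^{-1}$, and then the lower-right block gives $\phi_1 = -d\phi_2 = db$, so $\omega = \phi_1 + i\phi_2$ is uniquely determined by $I_\omega$.

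For (1), I first verify $I_\omega \in K_\omega$ by direct computation:
$$a + b\omega = \phi_2^{-1}\phi_1 - \phi_2^{-1}(\phi_1 + i\phi_2) = -i, \qquad c + d\omega = \phi_2 - i\phi_1,$$
so $I_\omega \cdot \omega = (\phi_2 - i\phi_1)\cdot i = \phi_1 + i\phi_2 = \omega$. For the centrality of $I_\omega$ in $K_\omega$ and the claim that $\mathrm{Ad}(I_\omega)$ is the Cartan involution of $U(A)_\bbQ(\bbR)$ associated to $K_\omega$, I would appeal to the structure theory of Hermitian symmetric spaces: since by Theorem \ref{action on z domain}(3) the space $C_A \cong U(A)_\bbQ(\bbR)^0/K_\omega$ is Hermitian symmetric, there is a canonical order-$4$ element in the center of $K_\omega$ whose square is $-1 \in Z(U(A)_\bbQ(\bbR)^0)$, whose adjoint action realizes the Cartan involution, and which is uniquely characterized by acting as $-\mathrm{id}$ on $T_\omega C_A \cong \mathfrak{p}_\omega$. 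To identify this canonical element with $I_\omega$, I would expand $I_\omega \cdot (\omega + \epsilon v)$ to first order in $v \in NS_{A,\bbC} = T_\omega C_A$; using the same ingredients as above together with $(a+b\omega')^{-1} = i - \epsilon\phi_2^{-1}v + O(\epsilon^2)$, the computation yields $\omega - \epsilon v + O(\epsilon^2)$, so $(dI_\omega)_\omega = -\mathrm{id}$ on $T_\omega C_A$. This forces $I_\omega$ to coincide with the canonical Cartan element of the Hermitian symmetric pair, which delivers both the centrality in $K_\omega$ and the fact that $\mathrm{Ad}(I_\omega)$ is the Cartan involution.

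The main obstacle is this last identification step in (1): explicitly matching the algebraically defined $I_\omega$ with the canonical Cartan element of $C_A$. A purely matrix-theoretic verification (checking $\mathrm{Ad}(I_\omega)$ preserves $\mathfrak{u}(A)_\bbQ(\bbR)$ and acts as $+1$ on $\mathfrak{k}_\omega$ and $-1$ on its orthogonal complement) is feasible but tedious; the tangent-space derivative route via Hermitian symmetric space theory, leveraging only the single computed fact that $(dI_\omega)_\omega = -\mathrm{id}$, is the cleanest path and what I would pursue.
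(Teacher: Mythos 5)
The paper gives no argument for this proposition at all --- its ``proof'' is the citation \cite[8.4.1]{GLO} --- so your self-contained argument is necessarily a different route, and on its own terms it is essentially sound. The computations all check: $I_\omega^2=-1$ via the conjugation $I_\omega=MNM^{-1}$ with $N^2=-1$; the recovery of $\omega$ from the blocks of $I_\omega$ (modulo a small slip: $\phi_1=-d\phi_2=db^{-1}$, not $db$ --- or more directly $\phi_1=\phi_2a$ from the upper-left block); the verification $I_\omega\cdot\omega=\omega$; and the first-order expansion giving $(dI_\omega)_\omega=-\id$ on $T_\omega C_A=NS_{A,\bbC}$. Your final identification step can be made cleaner and avoid any appeal to a ``canonical Cartan element'': since $\mathrm{Ad}(I_\omega)$ is an automorphism of $Lie(U(A)_\bbQ(\bbR)^0)=\mathfrak{k}_\omega\oplus\mathfrak{p}_\omega$ acting as $-1$ on $\mathfrak{p}_\omega$, it acts as $+1$ on $[\mathfrak{p}_\omega,\mathfrak{p}_\omega]$, and $[\mathfrak{p}_\omega,\mathfrak{p}_\omega]=\mathfrak{k}_\omega$ because $U(A)_\bbQ(\bbR)^0$ is semisimple without compact factors; hence $\mathrm{Ad}(I_\omega)$ is the Cartan involution and $I_\omega$ centralizes the connected group $K_\omega$.

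There is, however, one genuine loose end: the parenthetical assertion $I_\omega\in U(A)_\bbQ(\bbR)^0$ is part of the statement, and your argument cannot deliver it. The formula only places $I_\omega$ in $\U_{A,\bbQ}(\bbR)$, and no information about the action on $C_A$ can decide membership in $U(A)_\bbQ(\bbR)^0$, because the compact factors of $\U_{A,\bbQ}(\bbR)^0$ are contained in every point stabilizer and therefore act trivially on $C_A$; your derivative computation is blind to the component of $I_\omega$ along those factors (and to the component group of $\U_{A,\bbQ}(\bbR)$). You must supply this separately, and without it the phrase ``$I_\omega\in K_\omega$'' and the invocation of the Hermitian symmetric structure of $C_A$ as a $U(A)_\bbQ(\bbR)^0$-space are not yet licensed. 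A clean fix is available from your own factorization: writing
$$N=\left(\begin{array}{cc}1 & -\phi_2^{-1}\\ 0 & 1\end{array}\right)\left(\begin{array}{cc}1 & 0\\ \phi_2 & 1\end{array}\right)\left(\begin{array}{cc}1 & -\phi_2^{-1}\\ 0 & 1\end{array}\right),$$
the element $I_\omega=MNM^{-1}$ becomes a product of unipotent elements of $\U_{A,\bbQ}(\bbR)$, each lying on a unipotent one-parameter subgroup; such a subgroup projects trivially to the compact factors (a compact group has no nontrivial unipotents) and is connected, hence lies in $U(A)_\bbQ(\bbR)^0$. With that supplement your proof is complete.
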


\begin{proof} \cite[8.4.1]{GLO}.
\end{proof}

\begin{remark} \label{remark on q envelop} Let $\omega \in C_A$. It follows from Proposition \ref{properties of action} that there is an inclusion of algebraic $\bbQ$-groups $\langle I_\omega \rangle _\bbQ \subset U(A)_\bbQ$ (Definition \ref{def of hodge gp}).
\end{remark}

Consider the real vector space $V_A\oplus V_{\hat{A}}$. It has the complex structure $J_{A\times \hat{A}}$. Since the group $U(A)_\bbQ (\bbR)^0$ acts on $V_A\oplus V_{\hat{A}}$, for each $\omega \in C_A$ the operator $I_\omega$ defines another complex structure on $V_A\oplus V_{\hat{A}}$. These complex structures commute.

\begin{defn} \label{defn of mirror}\cite[9.2]{GLO} Algebraic pairs $(A,\omega _A)$ and $(B,\omega _B)$ are {\bfseries mirror symmetric} if there is an isomorphism of lattices
$$\alpha :\Lambda _A\stackrel{\sim}{\to}
\Lambda _B$$
which identifies the bilinear forms $Q_A$ and $Q_B$ and satisfies the following conditions
\begin{equation}\label{cond of mirror}\begin{array}{lll}
\alpha _{\bbR}\cdot J_{A\times \hat{A}} & = & I_{\omega _B}\cdot \alpha _{\bbR},\\
\alpha _{\bbR}\cdot I_{\omega _A} & = & J_{B\times \hat{B}}\cdot \alpha _{\bbR}
\end{array}
\end{equation}
\end{defn}

Let algebraic pairs $(A,\omega _A)$ and $(B,\omega _B)$ be mirror symmetric. We may assume that $\Lambda =\Lambda _A=\Lambda _B$ and $\alpha =\id$. Then we obtain the inclusions of algebraic $\bbQ$-groups
$$Hdg _A\subset U(B)_\bbQ,\quad \text{and}\quad
Hdg _B\subset U(A)_\bbQ$$
as subgroups of $SO(\Lambda _\bbQ,Q_\bbQ)$ (Remark \ref{remark on q envelop}).

\subsection{How to find a mirror symmetric pair}\label{how to find?}

Let $(A,\omega _A)$ be an algebraic pair. As explained above we obtain two commuting complex structures on the real vector space
$V_A\oplus V_{\hat{A}}$: $J_{A\times \hat{A}}$ and $I_{\omega _A}$.
To find a mirror pair $(B,\omega _B)$ we need the following:
\begin{itemize}
\item[(1)] Find a $Q_A$-isotropic decomposition $\Lambda _A=\Gamma _1\oplus \Gamma _2$ such that the vector subspaces $\Gamma _{1\bbR},\Gamma _{2\bbR}
    \subset V_A\oplus V_{\hat{A}}$ are $I_{\omega _A}$-invariant. This will give a complex torus $B=(\Gamma _{1\bbR}/\Gamma _1,I_{\omega _A}\vert _{\Gamma _{1\bbR}})$ and the dual torus $\hat{B}=(\Gamma _{2\bbR}/\Gamma _2,I_{\omega _A}\vert_{\Gamma _{2\bbR}})$ and an isomorphism of complex tori
    $$((V_A\oplus V_{\hat{A}})/(\Gamma _A\oplus \Gamma _{\hat{A}}),I_{\omega _A})\simeq B\times \hat{B}$$
\item[(2)] Show that $C_B\neq \emptyset$ (i.e. $B$ is an abelian variety) and there exists $\omega _B\in C_B$ such that the operator $I_{\omega _B}$ on $V_B\oplus V_{\hat{B}}=V_A\oplus V_{\hat{A}}$ coincides with $J_{A\times \hat{A}}$. This will show that the algebraic pairs $(A,\omega _A)$ and $B,\omega _B)$ are mirror symmetric (take $\alpha =\id$).
    \end{itemize}

Actually if (1) is achieved, then (2) is automatic \cite[9.4.6]{GLO}.

\begin{remark} (1) It is not true that for every algebraic pair $(A,\omega _A)$ there exists a mirror symmetric pair. The problem may occur if the group $U(A)_\bbQ$ is too big and $\omega _A\in C_A$ is chosen too general \cite[9.5.1]{GLO}. But for every abelian variety $A$ there exists an element $\omega \in C_A$ such that the pair $(A,\omega )$ has a mirror symmetric pair \cite[10.4.3]{GLO}.

(2) The mirror pair, if exists, may not be unique. However for a given pair $(A,\omega _A)$ the collection of isomorphism classes of abelian varieties $B$ for which there exists $\omega _B\in C_B$ such that the pairs $(A,\omega _A)$ and $(B,\omega _B)$ are mirror symmetric is finite \cite[9.2.3]{GLO}.
  \end{remark}

\subsection{A useful lemma} We recall a result from \cite{GLO} that will be useful later. Let $A$ be an abelian variety, $I\in U(A)_\bbQ (\bbR)^0$ such that $I^2=-1$. Then $I$ defines a complex structure on $\Lambda _\bbR$. The complex structures $I$ and $J_{A\times \hat{A}}$ commute and preserve the bilinear form $Q_\bbR$. The operator $c:=I\cdot J_{A\times \hat{A}}$ also preserves $Q_\bbR$ and the bilinear form
$Q_\bbR (c(-),-)$ is symmetric. Denote by $E_I$ the corresponding quadratic form on $\Lambda _\bbR$.

\begin{lemma}\label{useful lemma}  The quadratic form $E_I$ is positive definite if and only if $I=I_\omega$ for some $\omega \in C_A$.
\end{lemma}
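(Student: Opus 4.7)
The plan is to prove both directions of the biconditional by an explicit block-matrix computation in the $\U_{A,\bbQ}(\bbR)$-model, leaning heavily on the factorization of $I_\omega$ in \eqref{formula for iomega}.

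For the direction $(\Leftarrow)$, given $\omega = \phi_1 + i\phi_2 \in C_A$, I would exploit the factorization $I_\omega = g\, I_{i\phi_2}\, g^{-1}$ where $g = \bigl(\begin{smallmatrix}1 & 0 \\ \phi_1 & 1\end{smallmatrix}\bigr) \in \U_{A,\bbQ}(\bbR)$. Since $\U_{A,\bbQ}$ centralizes $Hdg_A$ (Remark \ref{centralizer}) and $J = J_{A\times \hat{A}} \in Hdg_A(\bbR)$, the element $g$ commutes with $J$; together with $g \in O(\Lambda_\bbR, Q_\bbR)$ this yields $E_{I_\omega}(x) = E_{I_{i\phi_2}}(g^{-1}x)$, so positivity reduces to the special case $\omega = i\phi_2$. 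A direct computation in the decomposition $\Lambda_\bbR = V_A \oplus V_{\hat{A}}$ gives, for $x = (v,w)$,
\begin{equation*}
E_{I_{i\phi_2}}(v,w) = \phi_2(J_A v,v) - w\bigl(\phi_2^{-1}\hat J_A w\bigr),
\end{equation*}
and the ampleness of $\phi_2$ makes both terms positive definite: the first by definition of the ample cone, the second via the substitution $u := -\phi_2^{-1}\hat J_A w \in V_A$, which (using the skew-symmetry of $\phi_2$ and the identity $\hat J_A\phi_2 = \phi_2 J_A$) converts $-w(\phi_2^{-1}\hat J_A w)$ into $\phi_2(J_A u, u)$.

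For the direction $(\Rightarrow)$, write $I = \bigl(\begin{smallmatrix}a & b \\ \gamma & d\end{smallmatrix}\bigr) \in \U_{A,\bbQ}(\bbR)$. Combining $I^{-1} = -I$ with the defining condition of $\U_{A,\bbQ}$ forces $d = -\hat a$, $\hat b = b$, $\hat \gamma = \gamma$, while $IJ = JI$ (again by Remark \ref{centralizer}) imposes $J$-compatibility on $b$ and $\gamma$, placing both in $NS_{A,\bbR}$. Evaluating $E_I$ on $\{0\} \oplus V_{\hat{A}}$ gives $E_I(0,w) = w(b\hat J_A w)$, so positive-definiteness on this subspace forces $b$ to be injective (hence invertible), and running the computation of $(\Leftarrow)$ backwards identifies $\phi_2 := -b^{-1}$ as an ample class. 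Setting $\phi_1 := -b^{-1}a$ and $\omega := \phi_1 + i\phi_2 \in C_A$, the four block relations extracted from $I^2 = -1$ (namely $a^2 + b\gamma = -1$, $ab + bd = 0$, $\gamma a + d\gamma = 0$, $\gamma b + d^2 = -1$) translate directly into $a = \phi_2^{-1}\phi_1$, $d = -\phi_1\phi_2^{-1}$, $\gamma = \phi_2 + \phi_1\phi_2^{-1}\phi_1$, which are precisely the entries of $I_\omega$ in \eqref{formula for iomega}. Hence $I = I_\omega$.

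The main obstacle is the bookkeeping in the reverse direction: one must check that the symmetries $\hat b = b$, $\hat \gamma = \gamma$ together with $J$-compatibility really do place $b$ and $\gamma$ in $NS_{A,\bbR}$ (not merely in $\Hom_\bbQ(A,\hat A)_\bbR$), and verify that positivity of $E_I$ on just the isotropic subspace $V_{\hat{A}}$ already extracts the full ampleness of $\phi_2$ rather than some weaker signature condition. The corresponding positivity on $V_A$ gives ampleness of $\gamma$; one should check this is automatically consistent with the identity $\gamma = \phi_2 + \phi_1\phi_2^{-1}\phi_1$ once $\phi_2$ is ample.
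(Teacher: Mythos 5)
Your argument is correct, and the comparison with the paper is somewhat degenerate: the paper does not prove this lemma at all --- its entire proof is the citation ``This is a special case of [GLO, 9.4.2]''. Your block-matrix computation therefore supplies a self-contained argument where the paper defers to an external reference, and both of the loose ends you flag do close. In the reverse direction, the entries of $I\in\U_{A,\bbQ}(\bbR)$ lie by definition in $\End(A\times\hat{A})\otimes\bbR$, so $b$ and $\gamma$ are automatically $J$-compatible, and the relations $\hat{b}=b$, $\hat{\gamma}=\gamma$ coming from $I^{-1}=-I$ then place them in $NS_{\hat{A},\bbR}$ and $NS_{A,\bbR}$, since the symmetric part of $\Hom(A,\hat{A})$ is exactly $NS_A$ and this identification survives extension of scalars to $\bbR$; the remaining symmetry $\hat{\phi}_1=\phi_1$ for $\phi_1=-b^{-1}a$ is equivalent to the block relation $ab+bd=0$ from $I^2=-1$. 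As for extracting ampleness from positivity on the single isotropic subspace $\{0\}\oplus V_{\hat{A}}$: once $b$ is invertible, the substitution $w=\phi_2 v$ is a bijection $V_A\to V_{\hat{A}}$ under which $w(b\hat{J}_A w)$ becomes exactly $\phi_2(J_A v,v)$ (using $b\phi_2=-1$, $\hat{J}_A\phi_2=\phi_2 J_A$ and skew-symmetry of $\phi_2$), so positivity for all $w$ is literally positivity of $\phi_2(J_A\,\cdot\,,\cdot)$ on all of $V_A$; and an element of $NS_{A,\bbR}$ with this positivity lies in $C^a_A$ because that set is an open convex cone whose rational points are precisely the rational ample classes and are dense in it. The positivity of $E_I$ on $V_A\oplus\{0\}$, equivalently the positivity of $\gamma=\phi_2+\phi_1\phi_2^{-1}\phi_1$, is then automatic from your forward direction and is not needed as an input, so your proof in fact establishes the lemma under the formally weaker hypothesis that $E_I$ is positive definite on $\{0\}\oplus V_{\hat{A}}$ alone.
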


\begin{proof} This is a special case of \cite[9.4.2]{GLO}.
\end{proof}

\subsection{Perfect algebraic pairs}

\begin{defn}\label{defn perfect}  An algebraic pair $(A,\omega)$ is {\bfseries perfect} if $U(A)_\bbQ =\langle I_\omega \rangle _{\bbQ}$.
\end{defn}

%\begin{remark} It follows from \cite[8.4]{GLO} that for any $\omega \in C_A$ the algebraic $\bbQ$-group $\langle I_\omega \rangle _\bbQ$ is connected.
%\end{remark}

\begin{lemma} \label{lemma single conj class} Let $A$ be an abelian variety.

(1) The set $\{ I_\omega \ \vert \ \omega \in C_A\}$ is a conjugacy class in $U(A)_\bbQ(\bbR)^0$.

(2) Let $\sigma $ be an automorphism of the Lie group $U(A)_\bbQ (\bbR)^0$ such that $\sigma (I_{\omega _1})=I_{\omega _2}$ for some $\omega _1,\omega _2\in C_A$. Then $\sigma $ preserves the conjugacy class $\{ I_\omega \ \vert \ \omega \in C_A\}$.
\end{lemma}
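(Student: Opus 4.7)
The plan is to deduce part (2) immediately from part (1) and to prove part (1) by combining the transitive action of $U(A)_\bbQ(\bbR)^0$ on $C_A$ (Theorem \ref{action on z domain}) with the equivariance identity $g I_\omega g^{-1}=I_{g\omega}$ for all $g\in U(A)_\bbQ(\bbR)^0$ and $\omega\in C_A$. Granted this identity, transitivity shows that any two $I_{\omega_1},I_{\omega_2}$ are conjugate in $U(A)_\bbQ(\bbR)^0$, so $\{I_\omega\mid\omega\in C_A\}$ is a single conjugacy class. Part (2) is then formal: any Lie group automorphism sends a conjugacy class to a conjugacy class, and $\sigma$ sends the class of $I_{\omega_1}$ to the class of $\sigma(I_{\omega_1})=I_{\omega_2}$; by part (1) both of these classes coincide with $\{I_\omega\mid\omega\in C_A\}$, so this set is $\sigma$-invariant.

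To establish the equivariance, first check that conjugation by $g$ sends $\{I_\omega\mid\omega\in C_A\}$ into itself using the intrinsic characterization of Lemma \ref{useful lemma}. The element $gI_\omega g^{-1}$ lies in $U(A)_\bbQ(\bbR)^0$ and squares to $-1$. For the positive definiteness of $E_{gI_\omega g^{-1}}$, use that $U(A)_\bbQ\subset \U_{A,\bbQ}$, which is the centralizer of $Hdg _A$ (Remark \ref{centralizer}) and in particular commutes with $J_{A\times\hat{A}}\in Hdg _A(\bbR)$, together with the fact that $g$ preserves $Q_\bbR$. A short direct computation then yields
$$E_{gI_\omega g^{-1}}(v,w)=E_{I_\omega}(g^{-1}v,g^{-1}w),$$
so positive definiteness is preserved. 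By Lemma \ref{useful lemma} we conclude $gI_\omega g^{-1}=I_{\omega'}$ for some $\omega'\in C_A$.

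The main obstacle is the identification $\omega'=g\omega$. Here the plan is to show that $I_{\omega'}$ acts on the Hermitian symmetric domain $C_A$ as the geodesic symmetry at $\omega'$, and hence has $\omega'$ as its unique fixed point. By Proposition \ref{properties of action}(1), $Ad(I_{\omega'})$ is the Cartan involution of $U(A)_\bbQ(\bbR)^0$ associated with the maximal compact subgroup $K_{\omega'}$; a direct check of the M\"obius-type formula for the action shows that $-1\in \U_{A,\bbQ}(\bbR)$ acts trivially on $C_A$, so $I_{\omega'}^2=-1$ acts trivially and $I_{\omega'}$ is a well-defined involution of $C_A$ whose differential at $\omega'$ is $-1$ on $T_{\omega'}C_A$. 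Via the exponential map this forces $\omega'$ to be the unique fixed point of $I_{\omega'}$ on $C_A$. Since $I_\omega\in K_\omega$ fixes $\omega$, the element $gI_\omega g^{-1}=I_{\omega'}$ fixes $g\omega$, so $\omega'=g\omega$, completing the equivariance and therefore part (1).
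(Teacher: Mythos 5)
Your proof is correct, but it follows a genuinely different route from the paper's. The paper's argument is purely group-theoretic: since $Ad(I_\omega)$ is the Cartan involution attached to $K_\omega$ (Proposition \ref{properties of action}), the element $I_\omega$ is pinned down by $K_\omega$ up to the finite center $Z(U(A)_\bbQ(\bbR)^0)$; conjugacy of maximal compact subgroups then gives $gI_{\omega}Zg^{-1}=I_{\omega'}Z$, and the residual central ambiguity is removed by a continuity-plus-connectedness argument (the $I_\omega$ vary continuously in $\omega$ while $Z$ is discrete). You instead prove the sharper equivariance $gI_\omega g^{-1}=I_{g\omega}$: Lemma \ref{useful lemma} (positive definiteness of $E_I$, preserved under conjugation because $g$ is a $Q_\bbR$-isometry commuting with $J_{A\times\hat A}$ via Remark \ref{centralizer}) shows the conjugate is again some $I_{\omega'}$, and the geodesic-symmetry/unique-fixed-point argument on the noncompact symmetric space $C_A$ identifies $\omega'=g\omega$. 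Your computation of $E_{gI_\omega g^{-1}}$ and the fixed-point step (differential $-1$ on $T_{\omega'}C_A$ from the Cartan involution, uniqueness of the fixed point since $\exp_{\omega'}$ is a diffeomorphism, $U(A)_\bbQ(\bbR)^0$ having no compact factors) both check out. What each approach buys: the paper's is shorter and needs no input beyond Proposition \ref{properties of action} and Helgason; yours yields the explicit formula $gI_\omega g^{-1}=I_{g\omega}$, which the paper in fact invokes without separate proof later (e.g.\ in Lemma \ref{generic perfect} and Proposition \ref{basic}), so your version makes that dependency explicit.
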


\begin{proof} (1) Notice that the center
$Z(U(A)_\bbQ (\bbR)^0)$ is discrete (as the group $U(A)_\bbQ (\bbR)^0$ is semi-simple) and the adjoint action of $I_\omega $ on the Lie algebra $Lie(U(A)_\bbQ(\bbR)^0)$ is the Cartan involution corresponding to the maximal compact subgroup $K_\omega \subset U(A)_\bbQ (\bbR)^0$ (Proposition \ref{properties of action}). Therefore for each $\omega \in C_A$ the set of elements in $U(A)_\bbQ(\bbR)^0$ whose adjoint action on the Lie algebra is the corresponding Cartan involution is the discrete set $I_\omega Z(U(A)_\bbQ(\bbR)^0)$. All maximal compact subgroups of $U(A)_\bbQ (\bbR)^0$ are conjugate \cite[Theorem 2.2]{Helg}. If $gK_\omega g^{-1}=K_{\omega ^\prime}$ for some $g\in U(A)_\bbQ (\bbR)^0$, then $gI_\omega Z(U(A)_\bbQ (\bbR)^0)g^{-1}=I_{\omega ^\prime} Z(U(A)_\bbQ(\bbR)^0)$. As $g$ belongs to the connected group
$U(A)_\bbQ(\bbR)^0$ and elements $I_\omega$ depend continuously on $\omega$, we must have $gI_\omega g^{-1}=I_{\omega ^\prime}$.

(2) This follows from (1).
\end{proof}

\begin{lemma} \label{generic perfect} Let $A$ be an abelian variety. Then there exists a subset $Z\subset C_A$ which is a countable union of proper analytic subsets such that for every $\omega \in C_A\backslash Z$ the pair $(A,\omega )$ is perfect.
\end{lemma}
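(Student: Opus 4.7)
The plan is to realize the non-perfect locus as a countable union of proper real-analytic subsets of $C_A$. I would first reformulate perfectness in elementary terms: since $\langle I_\omega\rangle_\bbQ\subseteq U(A)_\bbQ$ always holds (Remark \ref{remark on q envelop}), the pair $(A,\omega)$ fails to be perfect exactly when $\langle I_\omega\rangle_\bbQ$ is contained in some proper algebraic $\bbQ$-subgroup $H\subsetneq U(A)_\bbQ$. Unpacking Definition \ref{def of hodge gp}, this means $h_\omega(S^1)\subseteq H(\bbR)$, where $h_\omega(e^{i\theta})=\cos(\theta)+\sin(\theta)I_\omega$ is the one-parameter subgroup with infinitesimal generator $I_\omega$. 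A connected Lie subgroup is determined by its Lie algebra, so this is equivalent to the linear condition $I_\omega\in\Lie(H)_\bbR\subseteq\End(\Lambda_\bbR)$. Consequently $(A,\omega)$ is perfect iff $I_\omega\notin\Lie(H)_\bbR$ for every proper $\bbQ$-algebraic subgroup $H\subsetneq U(A)_\bbQ$.

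Next I would enumerate and parametrize. The collection of algebraic $\bbQ$-subgroups of $U(A)_\bbQ$ is countable, as each is cut out by a finite system of polynomial equations with rational coefficients. For each proper $H$ set
\begin{equation*}
Z_H:=\{\omega\in C_A\mid I_\omega\in\Lie(H)_\bbR\}.
\end{equation*}
The explicit formula \eqref{formula for iomega} realizes $\omega\mapsto I_\omega$ as a real-analytic map $C_A\to\End(\Lambda_\bbR)$, and $\Lie(H)_\bbR$ is a linear subspace of $\End(\Lambda_\bbR)$, so $Z_H$ is a real-analytic subset of $C_A$. Setting $Z:=\bigcup_H Z_H$ over proper $H$ then yields a countable union of analytic subsets whose complement consists of perfect pairs; the only remaining point is to verify that every $Z_H$ is a \emph{proper} subset of $C_A$.

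This properness is the main obstacle. Assume for contradiction $Z_H=C_A$, i.e.\ $I_\omega\in\Lie(H)_\bbR$ for all $\omega\in C_A$. By Lemma \ref{lemma single conj class}(1) the set $\{I_\omega\mid\omega\in C_A\}$ is a single $Ad(U(A)_\bbQ(\bbR)^0)$-orbit inside $\Lie(U(A)_\bbQ)_\bbR$, so its $\bbR$-span is $Ad$-stable, hence an ideal of the semisimple Lie algebra $\Lie(U(A)_\bbQ)_\bbR$, and therefore a sum of some of its $\bbR$-simple summands $\mathfrak{g}_i$. I would then verify that $I_\omega$ has nonzero projection to every $\mathfrak{g}_i$: by Proposition \ref{properties of action}(1), $Ad_{I_\omega}$ is the Cartan involution of $\Lie(U(A)_\bbQ)_\bbR$, and since $U(A)_\bbQ(\bbR)^0$ has no compact factors this involution is non-trivial on each $\mathfrak{g}_i$; hence the projection of $I_\omega$ to the corresponding simple factor $G_i$ is non-central, in particular non-trivial as a group element, so its Lie-algebra realization in $\mathfrak{g}_i$ is non-zero. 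Thus the $\bbR$-span of the orbit is all of $\Lie(U(A)_\bbQ)_\bbR$, so $\Lie(H)_\bbR=\Lie(U(A)_\bbQ)_\bbR$ and $H^0=U(A)_\bbQ$, contradicting $H\subsetneq U(A)_\bbQ$. With the properness secured, $Z=\bigcup_H Z_H$ is a countable union of proper analytic subsets and every $\omega\in C_A\setminus Z$ yields a perfect pair.
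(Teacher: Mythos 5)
Your argument is correct and follows the same overall strategy as the paper's proof: enumerate the countably many algebraic $\bbQ$-subgroups of $U(A)_\bbQ$, show that each proper one meets the family $\{I_\omega\}$ in a proper analytic subset of $C_A$, and rule out a proper subgroup containing the whole family by using that $Ad_{I_\omega}$ is a Cartan involution and that $U(A)_\bbQ(\bbR)^0$ has no compact factors. Two points of execution differ, both in your favor. First, you linearize the membership condition, replacing $I_\omega\in H(\bbR)$ by $I_\omega\in\Lie(H)_\bbR$ (legitimate since $h_\omega(S^1)$ is connected with infinitesimal generator $I_\omega$), which makes the analyticity of each $Z_H$ immediate. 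Second, and more substantively, where the paper establishes that $\langle I_\tau\rangle_\bbQ(\bbR)^0$ is normal in $U(A)_\bbQ(\bbR)^0$ by conjugating by rational points and then invoking Borel's density theorem \cite{Bor}, you observe that the $\bbR$-span of the $Ad$-orbit $\{I_\omega\}$ is automatically $Ad(U(A)_\bbQ(\bbR)^0)$-stable, hence an ideal, hence a sum of simple factors; this removes the appeal to \cite{Bor} entirely. Both proofs then finish identically: the Cartan involution is nontrivial on every noncompact simple factor, so $I_\omega$ has nonzero component in each, forcing the ideal to be all of $\Lie(U(A)_\bbQ)_\bbR$. The one step I would tighten in your write-up is the passage from ``the $G_i$-component of the group element $I_\omega$ is non-central'' to ``the $\mathfrak{g}_i$-component of the Lie-algebra element $I_\omega$ is nonzero'': this holds because $Ad_{I_\omega}=\exp\bigl(\tfrac{\pi}{2}\,ad_{I_\omega}\bigr)$ acts on $\mathfrak{g}_i$ through $ad$ of the $\mathfrak{g}_i$-component alone, so a vanishing component would make the Cartan involution trivial there; as written, the word ``realization'' glosses over this.
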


\begin{proof} We claim that for some $\tau \in C_A$ the Lie group $\langle I_\tau \rangle _{\bbQ}(\bbR)$ contains the whole conjugacy class $\{ I_\omega \ \vert
\ \omega \in C_A\}$. Indeed, there exists a countable number of algebraic $\bbQ$-subgroups in $U(A)_\bbQ$. Let $B$ be one such subgroup. If $\{ I_\omega \ \vert \ \omega \in C_A\}\nsubseteq B(\bbR)$, then
\begin{equation}\label{thin intersection} B(\bbR)\cap \{ I_\omega \ \vert \ \omega \in C_A\}
\end{equation}
is a proper algebraic subset, so it is nowhere dense in $\{ I_\omega \ \vert \ \omega \in C_A\}$.
Since $\{ I_\omega \ \vert \ \omega \in C_A\}$ is a Baire space, it is not a countable union of nowhere dense subsets. Therefore there exists $\tau \in C_A$ such that $\{ I_\omega \ \vert \ \omega \in C_A\}\subset \langle I_{\tau } \rangle _{\bbQ}(\bbR)$. Fix one such $\tau \in C_A$. We claim that $\langle I_{\tau } \rangle _{\bbQ}=
U(A)_\bbQ$. Both these groups are connected, so it is enough to prove the equality of dimensions. For this it suffices to show the equality of the groups of $\bbR$-points $\langle I_{\tau } \rangle _{\bbQ}(\bbR)^0=
U(A)_\bbQ(\bbR)^0$.

First notice that $\langle I_{\tau } \rangle _{\bbQ}(\bbR)^0$ is a normal subgroup in $U(A)_\bbQ(\bbR)^0$. Indeed, for any $g\in
U(A)_\bbQ (\bbQ)$, we have
$$g(\langle I_{\tau } \rangle _{\bbQ})g^{-1}=\langle I_{g(\tau ) } \rangle _{\bbQ}\subset \langle I_{\tau } \rangle _{\bbQ}$$
Since the semisimple Lie group $U(A)_\bbQ (\bbR)^0$ has no compact factors, the group $U(A)_\bbQ (\bbQ)$ is Zariski dense in  $U(A)_\bbQ (\bbR)^0$ \cite[Thm.1]{Bor}. So $\langle I_{\tau } \rangle _{\bbQ}(\bbR)^0$ is a normal (closed) subgroup in $U(A)_\bbQ (\bbR)^0$. Thus the Lie algebra of $\langle I_{\tau } \rangle _{\bbQ}(\bbR)^0$ consists of a number of simple factors of the Lie algebra of $U(A)_\bbQ (\bbR)^0$. But the adjoint action of $\langle I_{\tau } \rangle _{\bbQ}(\bbR)^0$ on $Lie(U(A)_\bbQ (\bbR)^0)$ contains a Cartan involution. Hence
$\langle I_{\tau } \rangle _{\bbQ}(\bbR)^0=U(A)_\bbQ (\bbR)^0$.
\end{proof}

\begin{defn} For an abelian variety $A$ put $C^0_A=C_A\backslash Z$ (in the notation of Lemma \ref{generic perfect}), i.e. $C_A^0$ consists of elements $\omega$, such that $(A,\omega)$ is a perfect pair. By Lemma \ref{generic perfect} $C_A^0$ is a dense subset of $C_A$.
\end{defn}

\subsection{Mirror symmetric families of abelian varieties}

\begin{defn} Algebraic pairs $(A,\omega _A)$ and $(B,\omega _B)$ which are mirror symmetric (Definition \ref{defn of mirror}) are called {\bfseries perfectly mirror symmetric} if in addition these pairs are perfect (Definition \ref{defn perfect}).
\end{defn}

\subsection{Construction of mirror symmetric families of abelian varieties} \label{construction} Assume that the algebraic pairs $(A,\omega _A)$ and $(B,\omega _B)$ are perfectly mirror symmetric. Identify the lattices $\Lambda =\Lambda _A=\Gamma _A\oplus \Gamma _A^* $ and $\Lambda _B=\Gamma _B\oplus \Gamma _B^*$ via the isomorphism $\alpha$ (Definition \ref{defn of mirror}), and consider the algebraic $\bbQ$-groups $U(A)_\bbQ$, $U(B)_\bbQ$, $Hdg _A$, $Hdg _B$ as subgroups of the special orthogonal group $SO(\Lambda _{\bbQ},Q_\bbQ)$. By assumption the group $Hdg _A$ (resp. $Hdg _B$) is the Zariski $\bbQ$-closure of the complex structure $I_{\omega _B}$ (resp. $I_{\omega _A}$). Hence we have the equalities
\begin{equation}\label{equality of mirror groups}U(A)_\bbQ =Hdg _B,\quad U(B)_\bbQ=Hdg _A
\end{equation}
(Vice versa: if equalities \eqref{equality of mirror groups} hold for mirror symmetric pairs $(A,\omega _A)$ and $(B,\omega _B)$ then these pairs are perfect).
Since the group $Hdg _A$ preserves the subspace $\Gamma _{A,\bbQ}\subset \Lambda _\bbQ$, so does the group $U(B)_\bbQ$. It follows that for any $\eta _B\in C_B$ the complex structure $I_{\eta _B}$ on $\Lambda _\bbR$ restricts to a complex structure on
$\Gamma _{A,\bbR}\subset \Lambda _\bbR$, i.e. we get the abelian variety $A_{\eta _B}=(V_A/\Gamma _A, I_{\eta _B})$ (\ref{how to find?}).
Symmetrically, for any $\eta _A\in C_A$ we have the abelian variety $B_{\eta _A}=(V_B/\Gamma _B, I_{\eta _A})$.

In sum we obtain two families of abelian varieties
\begin{equation}\label{dual families}\cA:= \{A_{\eta _B}\ \vert \ \eta _B\in C_B\}\quad \text{and}\quad
\cB:=\{B_{\eta _A}\ \vert \ \eta _A\in C_A\}
\end{equation}
with bases $C_B$ and $C_A$ respectively.

\begin{defn} \label{defn of mirror sym for ab var} We call the families $\cA $ and $\cB$ as in \eqref{dual families} the {\bfseries mirror symmetric families of abelian varieties}.
\end{defn}

\subsection{Properties of mirror symmetric families}

If in the above notation in addition $\eta _B\in C_B^0$, then
\begin{equation}\label{equal of hodge} Hdg _{A_{\eta _B}}=U(B)_\bbQ =Hdg _A\end{equation}
Therefore $\U_{A_{\eta _B},\bbQ}=\U _{A,\bbQ}$ (Remark \ref{centralizer}) and  $U(A_{\eta _B})_\bbQ =U(A)_\bbQ$ (Remark \ref{coincidence of groups}).
So $I_{\omega _A}\in U(A_{\eta _B})_\bbQ (\bbR)^0$.
We claim that $(A_{\eta _B},\omega _A)$ is an algebraic pair, i.e. that $\omega _A\in C_{A_{\eta _B}}$. To see this we consider the operator
$$c:=I_{\omega _A} \cdot J_{A_{\eta _B}\times \hat{A}_{\eta _B}}=
J_{B\times \hat{B}}\cdot I_{\eta _B}$$
By Lemma \ref{useful lemma} $I=I_{\omega _A}$ equals $I_{\omega _{A_{\eta _B}}}$ for some $\omega _{A_{\eta _B}}\in C_{A_{\eta _B}}$ (i.e. $\omega _A\in C_{A_{\eta _B}}$) if and only if the quadratic form associated with the bilinear form $Q(c(-),-)$ is positive definite on $\Lambda _\bbR$. But the same Lemma \ref{useful lemma} implies that this form is positive definite, as $\eta _B\in C_B$.

So we obtain
perfectly symmetric pairs $(A_{\eta _B},\omega _A)$ and $(B,\eta _B)$.

Symmetrically, for any $\eta _A\in C_A^0$ we have the abelian variety $B_{\eta _A}=(V_B/\Gamma _B, I_{\eta _A})$ with
$$Hdg _{B_{\eta _A}}=Hdg _B\quad \text{and}\quad U(B_{\eta _A})_\bbQ =U(B)_\bbQ $$
and perfectly symmetric pairs $(A,\eta _A)$ and $(B_{\eta _A},\omega _B)$.

In particular we obtain the following corollary.

\begin{cor}\label{cor coincide}
(1) For any parameter $\omega \in C_A$ there is an inclusion $Hdg _{B_\omega}\subset Hdg _B$ and therefore the inclusions
$\U _{B ,\bbQ}\subset \U _{B_\omega ,\bbQ}$,
$U(B)\subset U(B_\omega )$, $U(B)_\bbQ \subset U(B_\omega )_\bbQ$
  $U(B)^0\subset U(B_\omega )^0$. Also
 $End (B)\subset End(B_\omega)$, $Aut(B)\subset Aut(B_\omega)$.
(And similarly for the abelian varieties $A_\eta$, $\eta \in C_B$.)

(2) For any $\omega \in C_A^0$ the groups $Hdg _{B_\omega}$, $\U _{B_\omega ,\bbQ}$, $U(B_\omega)$, $U(B_\omega )_\bbQ $,  $U(B_\omega )^0$,  $End(B_\omega)$, $Aut(B_\omega)$ coincide with the corresponding groups for $B$. (And similarly for the abelian varieties $A_\eta$, $\eta \in C_B^0$.)
\end{cor}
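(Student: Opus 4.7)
The plan is to derive the entire corollary from a single inclusion, $Hdg_{B_\omega}\subset Hdg_B$ valid for every $\omega\in C_A$, upgraded to an equality when $\omega\in C_A^0$; everything else is bookkeeping with centralizers, Zariski closures, and integral intersections.

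First I would pin down $Hdg_{B_\omega}$ inside $SO(\Lambda_\bbQ,Q_\bbQ)$ explicitly. By the construction of $B_\omega$ in \ref{how to find?}, the complex torus $B_\omega\times\hat{B}_\omega$ is precisely $(\Lambda_\bbR/\Lambda,I_\omega)$, so $J_{B_\omega\times\hat{B}_\omega}=I_\omega$ and hence
\[
Hdg_{B_\omega}=\langle I_\omega\rangle_\bbQ\subset SO(\Lambda_\bbQ,Q_\bbQ).
\]
By Proposition \ref{properties of action} we have $I_\omega\in U(A)_\bbQ(\bbR)^0$, and by the perfect mirror symmetry assumption, together with the equalities \eqref{equality of mirror groups}, we have $U(A)_\bbQ=Hdg_B$. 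Since $Hdg_B$ is an algebraic $\bbQ$-subgroup of $SO(\Lambda_\bbQ,Q_\bbQ)$ containing $I_\omega$, the minimality of $\langle I_\omega\rangle_\bbQ$ gives $Hdg_{B_\omega}\subset Hdg_B$.

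From this, part (1) is a matter of bookkeeping. By Remark \ref{centralizer}, $\U_{C,\bbQ}$ is the centralizer of $Hdg_C$ in $SO(\Lambda_\bbQ,Q_\bbQ)$, so the inclusion of Hodge groups reverses under taking centralizers: $\U_{B,\bbQ}\subset\U_{B_\omega,\bbQ}$. Since $U(C)=\U_{C,\bbQ}(\bbQ)\cap Aut(\Lambda)$, intersecting with the lattice-preserving subgroup gives $U(B)\subset U(B_\omega)$. Taking $\bbQ$-Zariski closures and connected components yields $U(B)_\bbQ\subset U(B_\omega)_\bbQ$, whence $U(B)^0=U(B)\cap U(B)_\bbQ(\bbR)^0\subset U(B_\omega)\cap U(B_\omega)_\bbQ(\bbR)^0=U(B_\omega)^0$. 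For $End(B)\subset End(B_\omega)$, I would extend $f\in End(B)$ to $\tilde f=(f,\hat f)\in End(\Lambda)$; then $\tilde f$ commutes with $J_{B\times\hat{B}}$, so its centralizer in $GL(\Lambda_\bbQ)$, being an algebraic $\bbQ$-subgroup containing $J_{B\times\hat{B}}$, contains $Hdg_B$ and therefore $Hdg_{B_\omega}\ni I_\omega$. Since $I_\omega$ and $\tilde f$ both respect the decomposition $\Lambda_\bbR=V_B\oplus V_{\hat{B}}$ and $\tilde f$ acts diagonally on it, restricting to $V_B$ shows that $f_\bbR$ commutes with $I_\omega\vert_{V_B}$, i.e.\ $f\in End(B_\omega)$; specializing to invertible $f$ gives $Aut(B)\subset Aut(B_\omega)$. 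The parallel statements for $A_\eta$ with $\eta\in C_B$ follow by interchanging the roles of $A$ and $B$.

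Part (2) is obtained by upgrading the central inclusion to an equality. When $\omega\in C_A^0$, the pair $(A,\omega)$ is perfect by definition of $C_A^0$, so $\langle I_\omega\rangle_\bbQ=U(A)_\bbQ$; combined with $U(A)_\bbQ=Hdg_B$ this yields $Hdg_{B_\omega}=Hdg_B$. Rerunning the centralizer/intersection/Zariski-closure machine of the previous paragraph now produces equalities throughout: $\U_{B_\omega,\bbQ}=\U_{B,\bbQ}$, $U(B_\omega)=U(B)$, $U(B_\omega)_\bbQ=U(B)_\bbQ$, and $U(B_\omega)^0=U(B)^0$. The reverse inclusion $End(B_\omega)\subset End(B)$ uses the same commutator trick: for $f\in End(B_\omega)$, the centralizer of $\tilde f$ in $GL(\Lambda_\bbQ)$ contains $I_\omega$ and is algebraic over $\bbQ$, hence contains $\langle I_\omega\rangle_\bbQ=Hdg_{B_\omega}=Hdg_B\ni J_{B\times\hat{B}}$, so $\tilde f$ commutes with $J_{B\times\hat{B}}$ and $f\in End(B)$; taking invertible $f$ yields $Aut(B_\omega)=Aut(B)$. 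I do not foresee any genuine obstacle; the only delicate point is the bookkeeping that translates between the centralizer description of $\U_{\bullet,\bbQ}$ and the discrete groups $U(\bullet)$, $End(\bullet)$, $Aut(\bullet)$, and verifying that the commutation condition recovered on $\Lambda_\bbR$ descends to $V_B$ via the diagonal action of $\tilde f$.
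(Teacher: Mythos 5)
Your proposal is correct and follows essentially the same route as the paper: the corollary is derived from the identity $Hdg_{B_\omega}=\langle I_\omega\rangle_\bbQ\subset U(A)_\bbQ=Hdg_B$ (with equality exactly when $\omega\in C_A^0$, by perfectness), after which all the other inclusions and equalities follow formally from the centralizer description of $\U_{\bullet,\bbQ}$ (Remark \ref{centralizer}), Remark \ref{coincidence of groups}, and the fact that $\End^0$ is the commutant of the Hodge group. Your explicit verification of the $\End$ and $\Aut$ statements via the extension $\tilde f=(f,\hat f)$ is just a hands-on version of that last standard fact, so there is nothing further to add.
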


Actually also the ample cones of the abelian varieties
$\{ B_\omega \ \vert \ \omega \in C^0_A\}$ are the same. Hence the complexified cone  $C_{B_\omega}$ is independent of $\omega \in C^0_A$ (Corollary \ref{indep of ample cone}).

\begin{lemma} \label{equality of ample cones} Let $A=(V/\Gamma ,J)$ be an abelian variety, $Hdg _A=\langle J\rangle _\bbQ$. Let $g\in Hdg _A(\bbR)$, and consider $J':=gJg^{-1}$ as another complex structure on $V$, so that we have the complex torus $A':=(V/\Gamma ,J')$.

(1) Then $A'$ is also an abelian variety and we have the inclusions of Neron-Severi groups $NS_A\subset NS_{A'}$ and of ample cones $C^a_A\subset C^a_{A'}$.

(2) Assume in addition that $\langle J'\rangle _\bbQ=Hdg _A$, i.e. $Hdg _A=Hdg _{A'}$.  Then $C_A^a=C^a_{A'}$.
\end{lemma}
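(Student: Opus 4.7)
The plan hinges on the fact that the Hodge group $Hdg_A$ fixes $NS_A$ pointwise. First I would establish this: for any $c \in NS_A \subset \wedge^2 V^*_\bbQ$, the class $c$ lies in $H^{1,1}(A,\bbR) \cap H^2(A,\bbQ)$, and the element $h(z) \in Aut(V_\bbR)$ (with $h(\sqrt{-1}) = J$) acts on $H^{p,q}$ as the scalar $z^{-p}\bar{z}^{-q}$; on $H^{1,1}$ this scalar is $|z|^{-2} = 1$ for $z \in S^1$. Thus the stabilizer of $c$ in $GL(V)$ (acting on $\wedge^2 V^*$ through $V$) is an algebraic $\bbQ$-subgroup containing $h(S^1)$, hence by the minimality in Definition~\ref{def of hodge gp} it contains $Hdg_A$. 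Consequently, for every $g \in Hdg_A(\bbR)$ and every $c \in NS_A$,
\[ c_\bbR(gv,gw) = c_\bbR(v,w) \qquad \text{for all } v,w \in V. \]

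For part (1), with $J' = gJg^{-1}$ and $g \in Hdg_A(\bbR)$, I would verify both the $J'$-invariance and positivity of each ample class $c \in NS_A$. The $J'$-invariance is a direct computation using the two invariances just established:
\[ c_\bbR(J'v,J'w) = c_\bbR(gJg^{-1}v,\,gJg^{-1}w) = c_\bbR(Jg^{-1}v,\,Jg^{-1}w) = c_\bbR(g^{-1}v,\,g^{-1}w) = c_\bbR(v,w), \]
so $c$ represents a class in $NS_{A'}$; this shows $NS_A \subset NS_{A'}$. For positivity, when $c$ is ample on $A$, the same manipulation gives
\[ c_\bbR(J'v,v) = c_\bbR(Jg^{-1}v,\,g^{-1}v) > 0 \quad \text{for } v \neq 0, \]
using that $g^{-1}$ is invertible and that $c_\bbR(J{-},{-})$ is positive definite. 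This simultaneously shows that $A'$ admits a polarization (hence is an abelian variety) and that $C_A^a \subset C_{A'}^a$.

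For part (2), I would exploit the symmetry produced by the hypothesis $Hdg_{A'} = Hdg_A$. Writing $J = g^{-1} J' g$ with $g^{-1} \in Hdg_{A'}(\bbR) = Hdg_A(\bbR)$ places the pair $(A',A)$ in exactly the situation of part (1) with $g^{-1}$ in the role of $g$. Applying part (1) then yields $NS_{A'} \subset NS_A$ and $C_{A'}^a \subset C_A^a$, which combined with the inclusions of part (1) gives the equality $C_A^a = C_{A'}^a$.

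The main obstacle is the initial Hodge-theoretic step: justifying that $Hdg_A$ fixes every element of $NS_A$ (as a tensor in $\wedge^2 V^*_\bbQ$, not just up to scalar). Once the minimality definition of $Hdg_A$ is combined with the fact that $h(S^1)$ acts trivially on the $(1,1)$-part of $H^2$, this step is clean; all remaining verifications are direct manipulations with the formula $J' = gJg^{-1}$.
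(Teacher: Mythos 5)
Your proposal is correct and follows essentially the same route as the paper: both rest on the key fact that $Hdg_A(\bbR)$ fixes every class in $NS_A$ (which you justify via the trivial $S^1$-action on the $(1,1)$-part and minimality of the Hodge group, a point the paper asserts via the characterization of $NS_A$ as the $Hdg_A(\bbR)$-invariant integral $2$-forms), then the identical positivity computation $s(J'x,x)=s(Jg^{-1}x,g^{-1}x)>0$, and part (2) by interchanging $A$ and $A'$. No gaps.
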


\begin{proof} (1) By definition an integral skew symmetric form $s$ on $V$ belongs to $NS_A$ if $s$ is $J$-invariant. This happens if and only if $s$ is invariant under all elements of $Hdg _A(\bbR)$.
We have
$$Hdg _{A'}=\langle J^\prime \rangle _\bbQ\subset Hdg_A$$
Hence $NS_A\subset NS_{A'}$.

A skew-symmetric form $s\in NS_A$ represents an ample class if and only if the quadratic form $s(J(-),-)$ is positive definite on $V$. In this case for $0\neq x\in V$ we have
$$s(J'x,x)=s(gJg^{-1}x,x)=s(Jg^{-1}x,g^{-1}x)>0$$
Thus $s$ represents an ample class in $A'$, so $A'$ is an abelian variety and also $C_A^a\subset C_{A'}^a$.

(2) follows from (1), because we can interchange $A$ and $A'$.
\end{proof}

\begin{cor} \label{indep of ample cone} (1) For any parameter $\omega \in C_A$ we have $C_B\subset C_{B_\omega}$.

(2) For all $\omega \in C_A^0$ there is the equality $C_B= C_{B_\omega}$
\end{cor}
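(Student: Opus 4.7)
The plan is to deduce both parts from Lemma \ref{equality of ample cones}, using the single-conjugacy-class statement of Lemma \ref{lemma single conj class} together with the algebraic-group identifications furnished by perfect mirror symmetry.

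For part (1), I would first recall that after the identification $\Lambda_A = \Lambda_B = \Lambda$ coming from $\alpha$, Definition \ref{defn of mirror} gives $I_{\omega_A} = J_{B\times \hat B}$, so that restricting to $V_B \subset \Lambda_\bbR$ yields $I_{\omega_A}|_{V_B} = J_B$. Next, for an arbitrary $\omega \in C_A$, Lemma \ref{lemma single conj class}(1) produces $g \in U(A)_\bbQ(\bbR)^0$ with $I_\omega = g\,I_{\omega_A}\,g^{-1}$. Since the pairs are perfectly mirror symmetric, \eqref{equality of mirror groups} gives $U(A)_\bbQ = Hdg_B$, so in fact $g \in Hdg_B(\bbR)^0$. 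The group $Hdg_B$ preserves the decomposition $\Lambda = \Gamma_B \oplus \Gamma_{\hat B}$, because its generating image of $S^1$ does (it is generated by $J_{B\times \hat B}$, which restricts to $J_B$ on $V_B$ and to $J_{\hat B}$ on $V_{\hat B}$). Hence $g|_{V_B} \in Hdg_B(\bbR)$ and restriction of $I_\omega = g J_{B\times \hat B} g^{-1}$ to $V_B$ yields
\[
I_\omega|_{V_B} \;=\; g|_{V_B}\cdot J_B \cdot (g|_{V_B})^{-1}.
\]
Now Lemma \ref{equality of ample cones}(1), applied with $B$ in the role of $A$ and $B_\omega$ in the role of $A'$, gives $NS_B \subset NS_{B_\omega}$ and $C^a_B \subset C^a_{B_\omega}$, and therefore the inclusion of complexified ample cones $C_B \subset C_{B_\omega}$.

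For part (2), I would additionally invoke Corollary \ref{cor coincide}(2), which for $\omega \in C_A^0$ yields $Hdg_{B_\omega} = Hdg_B$. This places us squarely in the hypothesis of Lemma \ref{equality of ample cones}(2), giving $C^a_B = C^a_{B_\omega}$ and hence $C_B = C_{B_\omega}$.

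The only real subtlety is the step where one needs $g$ to preserve the direct-sum decomposition $\Lambda_\bbR = V_B \oplus V_{\hat B}$ so that restriction to $V_B$ makes sense and Lemma \ref{equality of ample cones} applies. This is exactly the reason the perfectness hypothesis $U(A)_\bbQ = Hdg_B$ is essential: without it, the conjugating element $g$ produced by Lemma \ref{lemma single conj class} would only be known to lie in $U(A)_\bbQ(\bbR)^0$, which need not respect the decomposition coming from $B$. Once this identification is in hand, however, the remainder of the argument is a direct transcription of Lemma \ref{equality of ample cones}.
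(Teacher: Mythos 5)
Your proof is correct and follows essentially the same route as the paper's: Lemma \ref{lemma single conj class} supplies a conjugating element in $U(A)_\bbQ(\bbR)^0=Hdg_B(\bbR)^0$, and Lemma \ref{equality of ample cones} then yields the inclusion (resp.\ equality, once perfectness of $\omega\in C_A^0$ forces $Hdg_{B_\omega}=Hdg_B$) of N\'eron--Severi groups and ample cones. Your added justification that $g$ preserves $\Gamma_B\oplus\Gamma_{\hat B}$ and hence restricts to an element of $Hdg_B(\bbR)$ acting on $V_B$ is exactly the point the paper leaves implicit, and is a welcome clarification rather than a departure.
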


\begin{proof} (1) Let $\omega \in C_A$. By Lemma \ref{lemma single conj class} the operators $I_\omega $ and $I_{\omega _A}$ are conjugate in the Lie group $U(A)_\bbQ(\bbR)^0=Hdg _B$. So by Lemma \ref{equality of ample cones} $(NS_B=)\ NS_{B_{\omega _A}}\subset NS_{B_\omega}$ and similarly for the ample cones. Therefore also $C_B\subset C_{B_\omega}$.

(2) If in addition $\omega \in C^0_A$, then
by definition $\langle I_\omega \rangle _{\bbQ}=\langle I_{\omega _A}\rangle _{\bbQ}$  and hence $Hdg _{B_\omega }=Hdg _B$.
So it remains to apply Lemma \ref{lemma single conj class} and   Lemma \ref{equality of ample cones}.
\end{proof}

%Note that by Corollary \ref{cor coincide} for every $\omega \in C_A$ there is a canonical embedding $i_\omega :End(B)\hookrightarrow End (B_\omega)$. We will consider this as an additional enhancing data.

%\begin{defn} {\bfseries Mirror symmetric families of enhanced abelian varieties} are families $\cA$ and $\cB$ as in Definition \ref{defn of mirror sym for ab var} together with the enhancements $i_\omega :End(B)\hookrightarrow End(B_\omega)$ and $i_\eta :End(A)\hookrightarrow  End(A_\eta)$ for all $\omega \in C_A$ and $\eta \in C_B$ respectively.
%\end{defn}

\subsection{Proof of Conjecture \ref{subs main conj} for mirror families of abelian varieties.}\label{proof of conj for fam of ab var}  By Corollary \ref{cor coincide} we have $U(A_\omega)=U(A)$ for $\omega \in C_B^0$ and by Remark \ref{remark commensur}, for any abelian variety $C$, the groups $G^{eq}(C)$ and $U(C)$ are isomorphic up to finite groups. We conclude that for the family $\cA$ the group $G^{eq}(\cA)$ (\ref{subs main conj}) is isomorphic up to finite groups to $U(A)$ and also to $U(A)^0$ (Remark \ref{remark commeas}).

Now consider the mirror dual family
$$f:\cB \to C_A$$
We need to determine the monodromy group (Definition \ref{def mon}) of this family and to prove that it is isomorphic up to finite groups to $U(A)^0$. This will require a few steps.

Recall that by our assumption we have the equality of algebraic $\bbQ$-subgroups of $SO(\Lambda _\bbQ, Q_\bbQ)$: $U(A)_\bbQ =Hdg _B$ \eqref{equality of mirror groups}. Hence in particular the group $U(A)_\bbQ$ preserves the subspace $\Gamma _{B,\bbQ}\subset \Lambda _\bbQ$ and the groups $U(A)_\bbQ$ and $Hdg _B$ can (and will) be considered as subgroups of $Gl(\Gamma _{B,\bbQ})$.

\begin{defn} Let $G\subset Gl(\Gamma _B)$ be the set of elements $g$ for which there exist $\omega _1,\omega _2\in C^0_A$ (that may depend on $g$) such that
\begin{equation}\label{commut with cx str}  gI_{\omega _1}g^{-1}=I_{\omega _2}
\end{equation}
\end{defn}

\begin{prop} \label{basic} The following holds for the set $G$:

(1) For each $g\in G$ we have $g U(A)_\bbQ(\bbR)^0 g^{-1}=U(A)_\bbQ(\bbR)^0$ and $g U(A)^0 g^{-1}=U(A)^0$;

(2) The conjugation action of $g$ on $U(A)_\bbQ(\bbR)^0$ preserves the conjugacy class $\{I_\omega \ \vert \ \omega \in C_A\}$. In particular $G$ is a subgroup of $Gl(\Gamma _{B,\bbQ})$;

(3) We have the inclusion of groups $U(A)^0\subset G$.

(4) $G$ acts on the space $C_A$ and this action lifts to an action on the family of abelian varieties $f:\cB \to C_A $;

(5) The quotient $C_A^0/G$ is the coarse moduli space of abelian varieties which appear in the family $\cB \vert _{C_A^0}$.
\end{prop}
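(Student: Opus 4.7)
The plan is to systematically exploit two ingredients: the perfectness hypothesis, which for $\omega\in C_A^0$ identifies the $\bbQ$-algebraic closure $\langle I_\omega\rangle_\bbQ$ with the whole of $U(A)_\bbQ$; and Lemma \ref{lemma single conj class}, which singles out $\{I_\omega\mid\omega\in C_A\}$ as a single conjugacy class in $U(A)_\bbQ(\bbR)^0$ and states that any automorphism of that Lie group sending one element of the class into it preserves the entire class. Throughout I will implicitly extend each $g\in Gl(\Gamma_B)$ to an element of $O(\Lambda,Q)$ acting by $(g^{-1})^*$ on $\Gamma_B^*$; this is legitimate because the decomposition $\Lambda_\bbQ=\Gamma_{B,\bbQ}\oplus\Gamma_{B,\bbQ}^*$ is preserved by $U(A)_\bbQ=Hdg_B$, and the extended element lies in $O(\Lambda,Q)$ by a short direct calculation with the form $Q$.

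For (1), since the extended $g$ is $\bbQ$-rational, conjugation by $g$ is a $\bbQ$-algebraic automorphism of $Gl(\Lambda_\bbQ)$ and preserves Zariski closures; perfectness then forces
\[
gU(A)_\bbQ g^{-1}=g\langle I_{\omega_1}\rangle_\bbQ g^{-1}=\langle I_{\omega_2}\rangle_\bbQ=U(A)_\bbQ,
\]
which passes to connected components and, since $g$ stabilizes $\Lambda$, also to the arithmetic subgroups $U(A)=U(A)_\bbQ\cap Gl(\Lambda)$ and $U(A)^0$. For (2), Lemma \ref{lemma single conj class}(2) applied to conjugation by $g$ (an automorphism of $U(A)_\bbQ(\bbR)^0$ by (1)) directly gives conjugacy-class invariance; closure of $G$ under products and inverses then reduces to checking that for $g,h\in G$ and any $\omega\in C_A^0$ the parameter $\sigma$ with $(gh)I_\omega(gh)^{-1}=I_\sigma$ lies again in $C_A^0$, which follows from the Zariski-closure calculation of (1) applied to $gh$. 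For (3), any $h\in U(A)^0\subset U(A)_\bbQ(\bbR)^0$ satisfies $hI_\omega h^{-1}=I_\sigma$ with $\sigma\in C_A$ by Lemma \ref{lemma single conj class}(1), and perfectness of $(A,\omega)$ transports to $(A,\sigma)$ by the same reasoning, placing $h$ in $G$.

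For (4), the $G$-action on $C_A$ is defined by $g\cdot\omega:=\sigma$ with $gI_\omega g^{-1}=I_\sigma$; well-definedness is Proposition \ref{properties of action}(3) combined with (2), and the group-action axioms are a direct check. The same integral map $g\colon\Gamma_B\to\Gamma_B$ intertwines $I_\omega$ with $I_{g\cdot\omega}$ and is therefore holomorphic from $B_\omega=(V_B/\Gamma_B,I_\omega)$ to $B_{g\cdot\omega}$, providing the lift of the action to the family $\cB$. For (5), the decisive observation is that an isomorphism of complex tori $B_\omega\isomoto B_{\omega'}$ is precisely a $\bbZ$-linear automorphism $g$ of $\Gamma_B$ with $gI_\omega g^{-1}=I_{\omega'}$, and for $\omega,\omega'\in C_A^0$ this places $g$ in $G$ by definition, so $G$-orbits on $C_A^0$ biject with isomorphism classes of abelian varieties in $\cB\vert_{C_A^0}$. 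The main technical point I expect is (5): one must verify that every such integral intertwiner, a priori only an automorphism of $\Gamma_B$, extends compatibly to the $\Lambda$-level datum via the dual action; once this is in place the proposition reduces entirely to the two structural inputs highlighted at the outset.
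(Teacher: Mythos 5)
Your proposal is correct and follows essentially the same route as the paper: both rest on the perfectness identity $\langle I_\omega\rangle_\bbQ=U(A)_\bbQ$ for $\omega\in C_A^0$ (so that rational conjugation forces $gU(A)_\bbQ g^{-1}=U(A)_\bbQ$ and preserves $C_A^0$) together with Lemma \ref{lemma single conj class} for the conjugacy-class statements, and parts (4)--(5) are handled identically via the injectivity of $\omega\mapsto I_\omega$ and the observation that an isomorphism $B_{\omega_1}\simeq B_{\omega_2}$ is exactly an integral intertwiner of the complex structures. The only difference is cosmetic: you extend $g$ to $O(\Lambda,Q)$, whereas the paper instead restricts $U(A)_\bbQ=Hdg_B$ to $Gl(\Gamma_{B,\bbQ})$, which sidesteps the compatibility check you flag at the end.
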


\begin{proof} (1) Let $g\in G$, and let $\omega _1,\omega _2\in C_A^0$ such that $g I_{\omega _1} g^{-1}=I_{\omega _2}$. Since $U(A)_\bbQ =\langle I_{\omega _1}\rangle _{\bbQ} =\langle I_{\omega _2}\rangle _{\bbQ}$ and $g$ is an integral matrix it follows that
$g U(A)_\bbQ  g^{-1}=U(A)_\bbQ $. Hence also
$g U(A)_\bbQ (\bbR)^0 g^{-1}=U(A)_\bbQ(\bbR)^0$. The group $U(A)^0$ is the subgroup of integral points in $U(A)_\bbQ(\bbR)^0$. Hence also $g U(A)^0 g^{-1}=U(A)^0$.

(2) The first assertion follows from Lemma \ref{lemma single conj class}. For the second one notice that $G$ consists of integral matrices, hence the action of $g\in G$ on the conjugacy class $\{I_\omega \ \vert \ \omega \in C_A\}$ preserves the subset $\{I_\omega \ \vert \ \omega \in C_A^0\}$.So $G$ is a subgroup of $Gl(\Gamma _B)$.

(3) Since $U(A)^0\subset Gl(\Gamma _B)$ consists of integer matrices, its adjoint action on the conjugacy class $\{I_\omega \ \vert \ \omega \in C_A\}$ preserves the subset $\{I_\omega \ \vert \ \omega \in C_A^0\}$. Therefore $U(A)^0\subset G$.

(4) Recall that the correspondence $\omega \mapsto I_\omega$ is injective, which means that $G$ acts on $C_A$. If for $g\in G$ and $\omega _1,\omega _2\in C_A$ we have $gI_{\omega _1}g^{-1}=I_{\omega _2}$ then $g:\Gamma _B\to \Gamma _B$ defines an isomorphism of abelian varieties $g:B_{\omega _1}\stackrel{\sim}{\to} B_{\omega _2}$. Therefore the $G$ action on $C_A$ lifts to an action on the family $\cB$.

(5) Assume that for $\omega _1,\omega _2\in C_A^0$ the abelian varieties $B_{\omega _1}$ and $B_{\omega _2}$ are isomorphic. Then there exists an element $g\in Gl(\Gamma _B)$ such that
$gI_{\omega _1}g^{-1}=I_{\omega _2}$. By definition we have $g\in G$, i.e. $\omega _1,\omega _2$ lie in the same orbit of the group $G$.
\end{proof}

\begin{cor} \label{cor on ex seq} (1) The group $G$ acts by automorphisms of the Lie group $U(A)_\bbQ (\bbR)^0$ and we have the exact sequence of groups
\begin{equation}\label{exact sec for action}
1\to Aut(B)\to G\to Aut(U(A)_\bbQ(\bbR))
\end{equation}

(2) The monodromy group $G^{mon}(\cB)$ of the family $\cB$ is $G/Aut(B)$ (Definition \ref{def mon}).
\end{cor}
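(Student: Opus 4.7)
\emph{Plan for part (1).} The conjugation action of $G$ preserves $U(A)_\bbQ(\bbR)^0$ by Proposition \ref{basic}(1), yielding a group homomorphism $\alpha : G \to \Aut(U(A)_\bbQ(\bbR))$; the entire content lies in identifying $\ker\alpha$ with $\Aut(B)$. The plan rests on the following centralizer observation: if $g \in Gl(\Gamma_{B,\bbQ})$ commutes with a complex structure $I$ on $\Gamma_{B,\bbR}$, then the $\bbQ$-algebraic centralizer $Z_\bbQ(g)$ is a $\bbQ$-algebraic subgroup whose real points automatically contain the entire one-parameter subgroup $\{\cos\theta + \sin\theta\cdot I : \theta \in \bbR\}$ (these elements are polynomials in $I$, hence commute with $g$ as well), so by the minimality in Definition \ref{def of hodge gp} we obtain $\langle I\rangle_\bbQ \subset Z_\bbQ(g)$.

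Applying this observation with $I = J_B$ and using the equality $\langle J_B\rangle_\bbQ = Hdg_B = U(A)_\bbQ$ from \eqref{equality of mirror groups}, any $\phi \in \Aut(B)$ centralizes the entire group $U(A)_\bbQ$; in particular $\phi I_\omega \phi^{-1} = I_\omega$ for every $\omega \in C_A$, so $\phi \in G$ and $\phi \in \ker\alpha$. Conversely, an element $g \in \ker\alpha$ centralizes $U(A)_\bbQ(\bbR)^0$, hence centralizes $J_B \in Hdg_B(\bbR)^0 = U(A)_\bbQ(\bbR)^0$; since $g$ already lies in $Gl(\Gamma_B)$, commuting with $J_B$ makes $g$ a holomorphic automorphism of the complex torus $B = (V_B/\Gamma_B, J_B)$. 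This finishes (1).

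\emph{Plan for part (2).} The plan is to apply Definition \ref{def mon} with $S = C_A$, $\cX = \cB$, ambient discrete group $G$, and $S^0 = C_A^0$; the complement $Z = C_A \setminus C_A^0$ is a countable union of proper analytic subsets by (the proof of) Lemma \ref{generic perfect}. Condition (3) is Proposition \ref{basic}(5), and condition (1) is immediate since $G \subset Gl(\Gamma_B)$ acts faithfully on $H^\bullet(B,\bbQ) = \Lambda^\bullet \Gamma_{B,\bbQ}^*$. The kernel $K$ of the $G$-action on $C_A$ equals $\Aut(B)$: any $\phi \in \Aut(B)$ fixes every $\omega \in C_A$ as in part (1), and conversely if $g \in G$ fixes even a single $\omega \in C_A^0$ then the centralizer observation applied to $I = I_\omega$ (using $\langle I_\omega\rangle_\bbQ = U(A)_\bbQ$ since $\omega \in C_A^0$) forces $g$ to centralize $U(A)_\bbQ$, whence $g \in \Aut(B)$ by the argument of part (1). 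The same computation restricted to $C_A^0$ shows that $G/K$ acts freely on $C_A^0$, verifying condition (2). Definition \ref{def mon} then delivers $G^{mon}(\cB) = G/K = G/\Aut(B)$.

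\emph{Main obstacle.} The potential pitfall is the centralizer step: commuting with the single (finite-order) element $I_\omega \in Gl(\Gamma_B)$ does not \emph{a priori} force commutation with the much larger algebraic group $\langle I_\omega\rangle_\bbQ = U(A)_\bbQ$. The resolution, as above, is that the real centralizer of an element commuting with $I_\omega$ automatically contains every polynomial in $I_\omega$, i.e.\ the full one-parameter subgroup $h_\omega(S^1)$, after which the minimality in the definition of $\langle I_\omega\rangle_\bbQ$ supplies the descent to the $\bbQ$-algebraic level. Everything else is bookkeeping against Definition \ref{def mon}.
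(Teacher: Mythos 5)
Your proof is correct and follows essentially the same route as the paper: part (1) identifies $\ker\alpha$ with the commutant of $J_B$ inside $Gl(\Gamma_B)$ via the minimality property in Definition \ref{def of hodge gp} together with $\langle J_B\rangle_\bbQ=Hdg_B=U(A)_\bbQ$, and part (2) is the same bookkeeping against Definition \ref{def mon} using Proposition \ref{basic}(5) for the coarse moduli condition. The one step you handle differently is the freeness of the $G/\Aut(B)$-action: the paper only proves \emph{generic} freeness, by observing that for $1\neq g$ the fixed locus $C_A^g$ is a proper algebraic subvariety and removing the countable union of these via a Baire-category argument; you instead use that every $\omega\in C_A^0$ gives a \emph{perfect} pair, so that a $g$ fixing a single such $\omega$ already centralizes $\langle I_\omega\rangle_\bbQ=U(A)_\bbQ$ and hence lies in $\Aut(B)$. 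Your version gives the slightly stronger conclusion that the action is free on all of $C_A^0$ (so the sets $S^0$ in conditions 2 and 3 of Definition \ref{def mon} coincide on the nose), at the cost of invoking perfectness rather than a purely topological argument; either suffices for the corollary.
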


\begin{proof} (1). By Proposition \ref{basic} the group $H$ acts on the Lie group $U(A)_\bbQ(\bbR)^0=Hgd _B(\bbR)^0$ by conjugation. The kernel of this action consists precisely of operators $g\in Gl(\Gamma _B)$ such that $g_\bbR$ commutes with the complex structure $J_B$ on $\Gamma _{B,\bbR}$, i.e. $g$ is an automorphism of the abelian variety $B$.

(2) Obviously ${\bf R}^\bullet f_*\bbQ _{\cB }$ is the trivial local system on $C_A$ with fiber $\wedge ^\bullet \Gamma _{B,\bbQ}$. The discrete group $G$ acts on this family and its action on the space of global sections of this local system is clearly effective.
By (1) the kernel of the $G$-action on $C_A$ is $Aut(B)$.
Moreover by Proposition \ref{basic} the quotient space $C_A^0/G$ is the coarse moduli space of abelian varieties in the family $\cB \vert_{C^0_A}$.
Recall that $C^0_A$ is the complement of a countably many analytic subsets in $C_A$. It remains to show that the $G/Aut(B)$-action on $C_A$ is generically free, i.e. it is free outside a countable number of analytic subsets.

The group $G/Aut(B)$ acts effectively on the space $C_A$ which is a conjugacy class in $U(A)_\bbQ(\bbR)^0$, hence an algebraic variety. The fixed subset $C_A^g\subset C_A$ for any $1\neq g\in G/Aut(B)$ is a proper algebraic subvariety. Hence the set
$$Z:=\bigcup _{1\neq g\in G/Aut(B)}C^g_A$$
is a union of countably many proper analytic subsets and the $G/Aut(B)$-action on its complement $C_A\backslash Z$ is free. (Note that $C_A\neq Z$, since $C_A$ is a Baire space.) So the action on $C^0_A\backslash (Z\cap C^0_A)$ is also free.
Thus according to Definition \ref{def mon} the group $G/Aut(B)$ is the monodromy group of the family $\cB$.
\end{proof}

It remains to prove the following

\begin{prop} \label{prop commeas} The group $G/Aut(B)$ is isomorphic up to finite groups to the group $U(A)^0$.
\end{prop}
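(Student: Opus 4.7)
The plan is to show that $G$ is commensurable with $U(A)^0\cdot Aut(B)$ by sandwiching both between arithmetic subgroups of a common algebraic $\bbQ$-group, and then to pass to the quotient by $Aut(B)$. First I would establish the containment $U(A)^0\cdot Aut(B)\subseteq G$: one has $U(A)^0\subseteq G$ by Proposition~\ref{basic}(3), and any $g\in Aut(B)$ commutes with $J_B$ on $\Gamma_{B,\bbR}$ and hence (by dualizing, since the resulting identity on $\Gamma_{B,\bbR}$ forces the same on $\Gamma_{B,\bbR}^*$ via the dual action) with $I_{\omega_A}=J_{B\times\hat B}$ on all of $\Lambda_\bbR$; as $\omega_A\in C_A^0$ by perfection of $(A,\omega_A)$, this places $g\in G$. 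The subgroup $Aut(B)$ is normal in $G$ by Corollary~\ref{cor on ex seq}(1).

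Next I would check that $U(A)^0\cap Aut(B)$ is finite. Any element of this intersection lies in $U(A)_\bbQ(\bbR)^0=Hdg_B(\bbR)^0$ and commutes with $I_{\omega_A}$. By Proposition~\ref{properties of action}(1), the adjoint action of $I_{\omega_A}$ on $U(A)_\bbQ(\bbR)^0$ is a Cartan involution whose centralizer is the maximal compact subgroup $K_{\omega_A}$. Thus $U(A)^0\cap Aut(B)$ is a discrete subgroup of a compact group, hence finite, and normal in $U(A)^0$ (being the intersection of $U(A)^0$ with a subgroup normal in $G$).

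The main step is to bound $G$ from above. Set
\begin{equation*}
N:=N_{Gl(\Gamma_{B,\bbQ})}(U(A)_\bbQ),\qquad Z:=Z_{Gl(\Gamma_{B,\bbQ})}(U(A)_\bbQ).
\end{equation*}
By perfection $U(A)_\bbQ=Hdg_B$ is the Mumford--Tate group of the Hodge structure on $\Gamma_{B,\bbQ}$, so $Z$ consists of endomorphisms respecting the Hodge decomposition, i.e. $Z=(End^0(B))^\times$. Since $U(A)_\bbQ$ is connected semisimple with finite center, the identity component $N^0$ is the almost-direct product $U(A)_\bbQ\cdot Z^0$ and the multiplication $U(A)_\bbQ\times Z^0\to N^0$ is an isogeny. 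Standard commensurability under isogenies then implies that $U(A)^0\cdot Aut(B)$, built from the arithmetic subgroups $U(A)^0\subseteq U(A)_\bbQ(\bbR)^0$ and $Aut(B)\subseteq Z(\bbR)$, is commensurable with $N^0(\bbZ)$, and hence of finite index in $N(\bbZ)$ since $[N:N^0]<\infty$. On the other hand, Proposition~\ref{basic}(1) shows every $g\in G$ normalizes $U(A)_\bbQ(\bbR)^0$, which is Zariski dense in the connected $\bbQ$-group $U(A)_\bbQ$; combined with $g\in Gl(\Gamma_B,\bbZ)$ this yields $G\subseteq N(\bbZ)$.

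Putting the pieces together, $U(A)^0\cdot Aut(B)\subseteq G\subseteq N(\bbZ)$ with the outer inclusion of finite index, so $G/(U(A)^0\cdot Aut(B))$ is finite. Quotienting by the normal subgroup $Aut(B)$ gives
\begin{equation*}
G/Aut(B)\;\supseteq\; U(A)^0\cdot Aut(B)/Aut(B)\;=\; U(A)^0/(U(A)^0\cap Aut(B))
\end{equation*}
with finite index, and by the earlier step the right-hand side is $U(A)^0$ modulo a finite normal subgroup. Hence $G/Aut(B)\sim U(A)^0$, as required. The main obstacle I anticipate is the clean identification $Z=(End^0(B))^\times$ together with the structural decomposition $N^0=U(A)_\bbQ\cdot Z^0$; both rely essentially on the perfection assumption $U(A)_\bbQ=Hdg_B$, without which $N^0$ could be strictly larger and introduce elements of $G$ not accounted for by $U(A)^0\cdot Aut(B)$.
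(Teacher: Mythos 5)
Your proof is correct, but it takes a genuinely different route from the paper's. The paper first passes to the quotient $H=G/Aut(B)$, views it inside $Aut(U(A)_\bbQ(\bbR)^0)$ via Corollary \ref{cor on ex seq}, cuts down to inner automorphisms (finite index, by semisimplicity), lifts back to $U(A)_\bbQ(\bbR)^0$ through the adjoint map (finite kernel), and then observes that the resulting group normalizes the arithmetic subgroup $U(A)^0$; the conclusion is delivered by Lemma \ref{furman}, which says an arithmetic subgroup of a semisimple $\bbQ$-group without compact factors has finite index in its normalizer (proved there via Borel density plus the Borel--Harish-Chandra finite-covolume theorem). You instead stay inside $Gl(\Gamma_{B,\bbQ})$ and compute the normalizer $N$ of $U(A)_\bbQ$ explicitly: perfection ($U(A)_\bbQ=Hdg_B$) identifies the centralizer as the unit group of $End^0(B)$, whence $N^0$ is the almost-direct product $U(A)_\bbQ\cdot Z^0$, and you sandwich $G$ between $U(A)^0\cdot Aut(B)$ and $N(\bbZ)$, closing the gap with Borel's theorem that arithmeticity is preserved under $\bbQ$-isogenies. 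The two arguments lean on Borel-type results of comparable depth, so neither is more elementary; yours buys extra structural information (it pins down $G$ up to finite index as the integral points of an explicitly described normalizer, and exhibits $U(A)^0\cdot Aut(B)$ as a concrete finite-index subgroup), while the paper's avoids having to identify the centralizer at all and is shorter once Lemma \ref{furman} is in hand. Two small remarks: your finiteness argument for $U(A)^0\cap Aut(B)$ (a discrete subgroup of the compact centralizer $K_{\omega_A}$ of the Cartan involution $Ad_{I_{\omega_A}}$) is clean and correct; and the group you call the Mumford--Tate group is really the Hodge group $Hdg_B$, though this does not affect the centralizer computation since commuting with $h(S^1)$ is equivalent to commuting with $J_B$.
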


\begin{proof} Denote the group $G/Aut(B)$ by $H$. By Corollary
\ref{cor on ex seq}, $H\subset Aut(U(A)_\bbQ(\bbR)^0)$. Since the Lie group $U(A)_\bbQ(\bbR)^0$ is semi-simple, the group of inner automorphisms $Inn(U(A)_\bbQ(\bbR)^0)$ has finite index in $Aut(U(A)_\bbQ(\bbR)^0)$. Put $H':=H\cap Inn(U(A)_\bbQ(\bbR)^0)$. Again semi-simplicity of $U(A)_\bbQ(\bbR)^0$ implies that its center
is finite. Denote by $H''$ the preimage of $H'$ in $U(A)_\bbQ(\bbR)^0$ under the conjugation action homomorphism
$$Ad:
U(A)_\bbQ(\bbR)^0\to Inn(U(A)_\bbQ(\bbR)^0)$$
The groups $H''$ and $H$ are isomorphic up to finite groups, so it suffices to show that $H''$ and $U(A)^0$ are isomorphic up to finite groups.

Note that the inclusion $U(A)^0\subset G$ (Proposition \ref{basic}) induces the inclusion $U(A)^0\subset H''$ and it suffices to prove that $U(A)^0$ is a subgroup of finite index in $H''$. The conjugation action of $H''$ on $U(A)_\bbQ(\bbR)^0$ preserves its arithmetic subgroup $U(A)^0$.

The assertion now follows from the general lemma.

\begin{lemma} \label{furman} Let $G$ be an algebraic $\bbQ$-group such that  $G(\bbR)$ is a semisimple Lie group without compact factors and let $L\subset G(\bbR)$ be an arithmetic subgroup. Then $L$ has finite index in its normalizer $N:=N_{G(\bbR)}(L)$.
\end{lemma}

\begin{proof} First notice that by Borel's theorem \cite[Thm.2]{Bor} the normalizer $N$ is contained in the group $G(\bbQ)$ of rational points of $G$. But $N$ is a closed subgroup of $G(\bbR)$, so $N$ is discrete.

By a theorem of Borel and Harish-Chandra  \cite[Thm.1]{BorHC} the arithmetic subgroup $L\subset G(\bbR)$ is a lattice, i.e. the homogeneous space $G(\bbR)/L$ has finite volume.

%The normalizer $N=N_G(L)$  of the lattice $L$ is a closed intermediate subgroup $L<N<G$.
%If $h$ is the Lie algebra of the connected component $N^0$ of $N$, then $h$ is stable
%under the adjoint representation of $N$, and in particular by $Ad(L)$. Since $L$ is Zariski dense in $G$ (Borel's Density Theorem) the Lie algebra $h$ would be stable
%under $Ad(G)$. This makes $N^0$ into a normal subgroup of G.

%Assume G is simple (the semi-simple case is similar).
%Then either h=0 or h=Lie(G). The latter is impossible, because it would make L normal in G.
%So h=0 meaning that N=N_G(L) is discrete. Want to show H/L is finite.
Let $1\in U\subset G(\bbR)$ be a neighborhood of identity with $U\cap N=1$, and let $V$ be a symmetric
neighborhood of $1$ with $V^2\subset U$. The subsets $\{nV \}_{n\in N}$ are disjoint sets of the same
positive Haar measure and there are $[N:L]$-many of them that project injectively into $G(\bbR)/L$.
The latter has finite volume, so $N/L$ is finite. This proves the lemma and completes the proof of Proposition \ref{prop commeas}.
\end{proof}
\end{proof}

Summarizing the discussion we can formulate the final result.

\begin{thm} Conjecture \ref{subs main conj} holds for mirror families of abelian varieties.
\end{thm}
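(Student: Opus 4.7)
The plan is to assemble the pieces already established in this section and invoke a symmetry argument. Concretely, Conjecture \ref{subs main conj} in this setting requires two statements: $G^{mon}(\cA)$ is isomorphic up to finite groups to $G^{eq}(\cB)$, and $G^{mon}(\cB)$ is isomorphic up to finite groups to $G^{eq}(\cA)$. I would prove the second one explicitly and then appeal to the symmetric roles of $A$ and $B$ in Definition \ref{defn of mirror sym for ab var} to obtain the first.

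First I would compute $G^{eq}(\cA)$. By definition $G^{eq}(\cA) = G^{eq}(A_{\eta_B})$ for a very general $\eta_B \in C_B$, in particular for $\eta_B \in C_B^0$. By Corollary \ref{cor coincide}(2), for such $\eta_B$ we have $U(A_{\eta_B}) = U(A)$, and by Remarks \ref{remark commensur} and \ref{remark commeas} the groups $G^{eq}(A_{\eta_B})$ and $U(A_{\eta_B})^0 = U(A)^0$ are isomorphic up to finite groups. Hence $G^{eq}(\cA) \sim U(A)^0$ up to finite groups.

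Next I would compute $G^{mon}(\cB)$. Corollary \ref{cor on ex seq}(2) identifies this monodromy group (in the sense of Definition \ref{def mon}) with $G/\mathrm{Aut}(B)$, where $G \subset Gl(\Gamma_B)$ is the group of \eqref{commut with cx str}. Proposition \ref{prop commeas} then asserts $G/\mathrm{Aut}(B) \sim U(A)^0$ up to finite groups. Combining with the previous paragraph yields $G^{mon}(\cB) \sim G^{eq}(\cA)$ up to finite groups, which is one half of Conjecture \ref{subs main conj} for the pair $(\cA, \cB)$.

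The other half, $G^{mon}(\cA) \sim G^{eq}(\cB)$, follows by exchanging the roles of $(A, \omega_A)$ and $(B, \omega_B)$: the mirror symmetry relation in Definition \ref{defn of mirror} is manifestly symmetric (conditions \eqref{cond of mirror} are swapped by sending $\alpha$ to $\alpha^{-1}$), the perfectness condition is symmetric, and the construction \ref{construction} of the dual families is symmetric, so all results from Corollary \ref{cor coincide} through Proposition \ref{prop commeas} apply verbatim with $A$ and $B$ interchanged. No step here is a genuine obstacle since the hard work has been done; if there is anything subtle to double-check, it is that the ``very general fiber'' condition defining $G^{eq}(\cA)$ matches the ``generically free modulo a finite kernel'' condition used in Corollary \ref{cor on ex seq} to identify $G^{mon}(\cB)$, i.e.\ that the countable union of proper analytic subsets avoided in one definition can be enlarged to also avoid the corresponding loci in the other, which is automatic because a countable union of such subsets remains a countable union of such subsets.
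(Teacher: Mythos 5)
Your proposal is correct and assembles exactly the same ingredients as the paper's own argument: $G^{eq}(\cA)\sim U(A)^0$ via Corollary \ref{cor coincide} and Remarks \ref{remark commensur}, \ref{remark commeas}, and $G^{mon}(\cB)=G/\Aut(B)\sim U(A)^0$ via Corollary \ref{cor on ex seq} and Proposition \ref{prop commeas}, with the remaining direction following from the symmetry of the construction. Your closing remark about matching the two genericity conditions is a reasonable point of care but, as you note, is automatic.
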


%\begin{proof} Let $\cA$ and $\cB$ be mirror symmetric familes of abelian varieties as in (Definition \ref{defn of mirror sym for ab var}). For any member $A_\eta$ of the family $\cA$ the group $G^{eq}(A_\eta)$ is isomorphic up to finite groups to the group $U(A_\eta)^0$ (Remark \ref{remark commeas}) and $U(A_\eta )^0=U(A)^0$ (Remark \ref{remark coincide}). By Proposition  \ref{prop commeas} the monodromy group $G^{mon}(\cB)$ is isomorphic up to finite groups to $U(A)^0$. Hence $G^{mon}(\cB)$ is isomorphic up to finite groups to $G^{eq}(\cA)$ as asserted.
%\end{proof}

\subsection{An example of mirror symmetric families of abelian varieties} We will construct an example of perfectly mirror symmetric algebraic pairs (which then by construction in \ref{construction} gives rise to mirror symmetric families of abelian varieties).

Start with a lattice $\Gamma =\oplus _{i=1}^{2n}\bbZ e_i$ and a skew symmetric form $s$ on $\Gamma$ defined by
$$s(e_i,e_{j+n})=\delta _{ij}\quad \text{for $1\leq i,j\leq n$}$$
Consider the operator $J_0\in Gl(\Gamma)$
$$J_0(e_i)=-e_{i+n},\quad J_0(e_{i+n})=e_i\quad \text{for $1\leq i\leq n$}$$
Then $J_0^2=-1$. Moreover, $J_0$ preserves the bilinear form and for each $0\neq \gamma \in \Gamma $ we have $s(J_0\gamma ,\gamma )>0$.
That is the symmetric bilinear form $b(v,w):=s_{\bbR}(J_0v,w)$ is  positive definite on $V:=\Gamma _{\bbR}$. Therefore $J_0$ is a complex structure on the $\bbR$-vector space $V$ and $A_0=(V/\Gamma ,J_0)$ is an abelian variety with polarization $s$.

Consider the symplectic Lie group $Sp (V,s_{\bbR})$. We have $J_0\in Sp (V,s_{\bbR})$ and the centralizer of $J_0$ is the maximal compact subgroup
$$K_{J_0}:=\{g\in Sp (V,s_{\bbR})\ \vert \ b(gv,gw)=b(v,w)\quad \text{for all $v,w\in V$}\}$$
The operator $AdJ_0$ is a Cartan involution on the Lie algebra $Lie (Sp (V,s_{\bbR}))$ corresponding to the subgroup $K_{J_0}$. Let $C(J_0)\subset Sp (V,s_{\bbR})$ be the conjugacy class of $J_0$.
We get a family of abelian varieties $\{A_J:=(V/\Gamma ,J)\ \vert \ J\in C(J_0)\}$ with polarization $s$.
As in the proof of Proposition \ref{generic perfect} one can show  that for a general $J\in C(J_0)$
\begin{equation}\label{generic}
\langle J\rangle _\bbQ=Sp(\Gamma _\bbQ,s_\bbQ)=Sp_{2n,\bbQ}
\end{equation}
Fix $J\in C(J_0)$ such that \ref{generic} holds. Then the corresponding abelian variety $A=(V/\Gamma ,J)$ has $NS_{A}=\bbZ s$. Moreover, $\End (A)=\End (\hat{A})=\bbZ$ and $\Hom (A,\hat{A})=\bbZ s$, $\Hom (\hat{A},A)=\bbZ s^{-1}$. Therefore the group $U(A)$ is isomorphic to $SL_2(\bbZ)$, and $$\U _{A,\bbQ}=U(A)_\bbQ =SL_{2,\bbQ}$$
As usual we may consider the group $Hdg _A=Sp_{2n,\bbQ}$ as a subgroup of the special orthogonal group $SO(\Lambda _{A,\bbQ})$. Then it is easy to see that $Hdg _A$ is the centralizer of $U(A)_\bbQ$ in $SO(\Lambda _{A,\bbQ})$. Hence $Hdg _A$ and
$U(A)_\bbQ$ are mutual centralizers in $SO(\Lambda _{A,\bbQ})$.

Since $NS_A=\bbZ$, for any $\omega _A\in C_A$ the algebraic pair $(A,\omega _A)$ has a mirror symmetric pair $(B,\omega _B)$ (moreover in our case $B$ is a power of an elliptic curve)  \cite[9.6.3]{GLO}. As usual we may assume that $\Lambda _A=\Lambda _B =\Lambda $. Let us choose $\omega _A$ so that the pair $(A,\omega _A)$ is perfect, i.e. $\langle I_{\omega _A}\rangle _\bbQ=U(A)_\bbQ $ (Proposition \ref{generic perfect}). Then we claim that the corresponding pair $(B,\omega _B)$ is also perfect. Indeed,
by definition
$$\langle I_{\omega _B}\rangle _\bbQ =\langle J\rangle _\bbQ=Hdg _A=Sp_{2n,\bbQ}$$
which is the centralizer of
$$U(A)_\bbQ=\langle I_{\omega _A}\rangle _\bbQ =Hdg _B$$
in $SO(\Lambda _\bbQ)$.
But $U(B)_\bbQ$ is contained in the centralizer of $Hdg _B$, hence
$\langle I_{\omega _B}\rangle _\bbQ=U(B)_\bbQ$, i.e. the pair $(B,\omega _B)$ is perfect.

We have constructed perfectly mirror symmetric pairs of abelian varieties $(A,\omega _A)$ and $(B,\omega _B)$, which give rise to a mirror symmetric pairs of abelian varieties as in
\ref{construction}.

\section{Towards Conjecture \ref{subs main conj} for Calabi-Yau hypersurfaces in dual toric varieties}\label{CY}

\subsection{} Many examples of mirror symmetric families of Calabi-Yau varieties were constructed by Batyrev \cite{Bat}.
He starts with two dual lattices $M\simeq \bbZ ^{n+1}$ and $N=M^*$ and a pair of dual reflexive polytopes $\Delta \subset M_\bbQ$, $\Delta ^*\subset N_\bbQ$. These polytopes define a pair of
dual projective Gorenstein toric Fano varieties $\bbP _\Delta$ and $\bbP _{\Delta ^*}$. The corresponding families $\cY$ and $\cY^*$ of anticanonical Gorenstein Calabi-Yau divisors in $\bbP _\Delta $ and $\bbP _{\Delta ^*}$ are expected to be {\it mirror symmetric} \cite{Bat} (one takes the family of all anticanonical divisors and removes the ones that are not Gorenstein) .

From our perspective the problem here is that both families $\cY$ and $\cY ^*$ may consist of singular Calabi-Yau varieties,  in which case we do not want to consider their derived categories.
We can only test our Conjecture \ref{subs main conj} in case the general member of the family is smooth.

It is proved in \cite{Bat} that there always exist {\it maximal projective crepant partial (MPCP)} toric resolutions
$\hat{\bbP}_\Delta \to \bbP _\Delta$ and $\hat{\bbP}_{\Delta ^*} \to \bbP _{\Delta ^*}$. The projective toric varieties $\hat{\bbP}_\Delta$ and $\hat{\bbP}_{\Delta ^*}$ correspond to simplicial fans (in $N_\bbQ$ and $M_\bbQ$ respectively), hence they have only quotient singularities. Moreover the pullbacks $\cX$ and $\cX ^*$ of families $\cY$ and $\cY ^*$ will consist of projective
$n$-dimensional Gorenstein Calabi-Yau varieties which are {\it quasi-smooth}, i.e. have only quotient singularities. In particular, for members $X$ and $X^*$ of these families the rational cohomology spaces $H^\bullet (X,\bbQ)$ and $H^\bullet (X^*,\bbQ)$ satisfy Poicare duality and possess pure Hodge structure.

The main result of \cite{BatBor} implies the mirror symmetry for the corresponding Hodge numbers
\begin{equation}\label{mir sym for hodge diamonds} h^{p,q}(X)=h^{n-p,q}(X^*), \quad 0\leq p,q\leq n
\end{equation}

It might happen that varieties $X$ and $X^*$ are smooth
(this is always so if $n=3$ \cite{Bat})
in which case one can test Conjecture \ref{subs main conj} for the families $\cX$ and $\cX ^*$ and
we believe that it holds. Unfortunately we are unable to prove this in full generality. Instead we have some partial results in the direction of the conjecture. Let us make two technical assumptions.

\medskip

\noindent{\it Assumption A.} We assume that the polytope $\Delta $ is integral, i.e. $\Delta \subset M$, and that the projective Gorenstein toric Fano variety $\bbP _\Delta$ is {\it smooth}.

\medskip

This implies that $\hat{\bbP}_\Delta =\bbP _\Delta$, $\cX$ is the family of (very ample) smooth anticanonical divisors in $\bbP _\Delta$. So all members $X$ of the family $\cX$ are smooth projective Calabi-Yau varieties.

\medskip

\noindent{\it Assumption B.} We assume that $n=\dim X$ is {\it odd}.

\medskip

The two assumptions imply that the Hodge diamond of $X$ is a cross:
\begin{equation}\label{cross} H^\bullet (X,\bbQ)=H^n(X,\bbQ)\oplus (\oplus _pH^{p,p}(X,\bbQ))
\end{equation}
Indeed, $X$ is a hyperplane section of a smooth projective toric variety $\bbP _\Delta$ whose cohomology consists of algebraic cycles. It remains to apply the weak Lefschetz theorem to the pair $X\subset \bbP _\Delta$.

The group $G^{eq}(X)$ preserves the Mukai pairing on the even cohomology $H^{even}(X,\bbQ)=\oplus _pH^{p,p}(X,\bbQ)$. Because $c_1(X)=0$ and $\dim X$ is odd, this Mukai pairing is skew-symmetric \cite[Exercise 5.43]{Huyb}. Therefore $G^{eq}(X)\subset Sp(H^{even}(X,\bbQ))$.

On the mirror side we have no reason to believe that the general member $X^*$ of the dual family $\cX ^*$ is smooth. However the relation \eqref{mir sym for hodge diamonds} implies that
\begin{equation}\label{equality of dimensions}
\dim H^{even}(X,\bbQ)=\dim H^n(X^*,\bbQ) \quad \text{and}\quad \dim H^{even}(X^*,\bbQ)=\dim  H^n(X,\bbQ)
\end{equation}

The monodromy group of $G^{mon}(\cX^*)$  acts trivially on the even cohomology $H^{even}(X^*,\bbQ)$ and preserves the Poincare pairing on the middle cohomology $H^n(X^*,\bbQ)$. Since $n$ is odd this pairing is skew-symmetric and therefore $G^{mon}(\cX^*)\subset Sp(H^n(X^*,\bbQ))$.

We find that the discrete groups in question, $G^{eq}(X)$ and
$G^{mon}(\cX^*)$, are contained in isomorphic symplectic groups.

\begin{remark} (1) Notice that in fact $G^{mon}(\cX^*)$ is contained in $Sp(H^n(X^*,\bbZ))$ and we expect that it is a subgroup of finite (small) index. For the family of smooth hypersurfaces in a projective space this is a theorem of Beauville \cite{Beau}.

(2) On the other hand the action of $G^{eq}(X)=G^{eq}(\cX)$ on $H^{even}(X,\bbQ)$ does not in general preserve the lattice $H^{even}(X,\bbZ)$. However it preserves a different lattice, which is the image of the topological K-theory $K^0_{top}(X)$ under the Mukai vector isomorphism $v: K^0_{top}(X)\otimes \bbQ\to H^{even}(X,\bbQ)$
\cite{AdTho}.

{\bfseries We expect} the groups $G^{mon}(\cX ^*)$ and $G^{eq}(\cX)$ to be arithmetic subgroups in the corresponding isomorphic symplectic groups $Sp(H^n(X^*,\bbQ))$ and $Sp(H^{even}(X,\bbQ)$ (which would prove Conjecture \ref{subs main conj} for families $\cX$ and $\cX ^*$).
\end{remark}

\subsection{} The next theorem is an indication that $G^{eq}(X)$ may indeed be an arithmetic subgroup of $Sp (H^{even}(X,\bbQ))$.

\begin{thm} \label{half about geq} For every member $X$ of the family $\cX$ the discrete group $G^{eq}(X)$ is Zariski dense in $Sp (H^{even}(X,\bbQ))$.
\end{thm}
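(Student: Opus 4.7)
The plan is to show $L^{eq}(X)=\mathfrak{sp}(H^{even}(X,\bbQ))$. One inclusion is automatic: under Assumptions A and B the Mukai pairing on $H^{even}(X,\bbQ)$ is skew-symmetric and preserved by $G^{eq}(X)$, so $L^{eq}(X)\subseteq\mathfrak{sp}(H^{even}(X,\bbQ))$. For the reverse inclusion I would produce enough elements of $L^{eq}(X)$ from two explicit families of autoequivalences, show $L^{eq}(X)$ acts irreducibly on $H^{even}(X,\bbQ)$, and conclude with a standard fact about irreducible subalgebras of $\mathfrak{sp}$ containing a rank-one element.

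Set $v_0:=\sqrt{td_X}=v(\cO_X)$. The two building blocks are: (i) for every $L\in\Pic(X)$, the autoequivalence $(-)\otimes L$ acts on $H^{even}(X,\bbQ)$ as $\exp(e_{c_1(L)})$, where $e_\kappa$ denotes cup product by $\kappa$; taking powers and Zariski closure puts $e_\kappa$ into $L^{eq}(X)$ for every $\kappa\in\Pic(X)\otimes\bbQ$. Under Assumption A, weak Lefschetz (for $n\geq 3$; $n=1$ is an easy separate check) together with $h^{2,0}(X)=0$ and the smoothness of $\bbP_\Delta$ give $\Pic(X)\otimes\bbQ=H^2(X,\bbQ)$, so $e_\kappa\in L^{eq}(X)$ for every $\kappa\in H^2(X,\bbQ)$. (ii) Since $X$ is Calabi-Yau, $\Ext^\bullet(\cO_X,\cO_X)=\bbQ\oplus\bbQ[-n]$, making $\cO_X$ a spherical object; the spherical twist $T_{\cO_X}$ acts on cohomology as $v\mapsto v-\langle v,v_0\rangle v_0$. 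Since $\langle v_0,v_0\rangle=0$, the powers $T_{\cO_X}^k$ give $v\mapsto v-k\langle v,v_0\rangle v_0$, and the Zariski closure of this one-parameter family contributes the nonzero rank-one operator $N_{v_0}:v\mapsto-\langle v,v_0\rangle v_0$ to $L^{eq}(X)$.

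Next I would show that $L^{eq}(X)$ acts irreducibly on $H^{even}(X,\bbQ)$. The key ring-theoretic input is that $H^{even}(X,\bbQ)$ is generated as a $\bbQ$-algebra by $H^2(X,\bbQ)$: by weak Lefschetz, $H^{<n}(\bbP_\Delta,\bbQ)\isomoto H^{<n}(X,\bbQ)$, and $H^\bullet(\bbP_\Delta,\bbQ)$ is generated by divisor classes since $\bbP_\Delta$ is smooth projective toric; by Hard Lefschetz on $X$, multiplication by a hyperplane class $h\in H^2(X,\bbQ)$ induces $H^{n-k}(X)\isomoto H^{n+k}(X)$, so $H^{>n}_{even}(X,\bbQ)$ is built multiplicatively from $H^{<n}_{even}(X,\bbQ)$. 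Since $v_0$ has degree-zero component $1$ and is therefore a unit, $\Sym^\bullet H^2(X,\bbQ)\cdot v_0=H^{even}(X,\bbQ)$. Now let $W\subseteq H^{even}(X,\bbQ)$ be a nonzero $L^{eq}(X)$-submodule. It is preserved by every $e_\kappa$, hence by cup product with every element of $H^{even}(X,\bbQ)$. If $W\not\subset v_0^\perp$, choose $w\in W$ with $\langle w,v_0\rangle\neq 0$; then $N_{v_0}(w)\in W$ gives $v_0\in W$, and $H^{even}$-invariance yields $W\supseteq H^{even}(X,\bbQ)\cdot v_0=H^{even}(X,\bbQ)$. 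If instead $W\subset v_0^\perp$, invariance under each $e_\kappa$ forces $\langle w,\kappa v_0\rangle=0$ for all $w\in W$, $\kappa\in H^2$; iterating, $W\subset\bigcap_p(p(\kappa)v_0)^\perp=H^{even}(X,\bbQ)^\perp=0$.

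Finally, $L^{eq}(X)$ is an irreducible Lie subalgebra of the simple Lie algebra $\mathfrak{sp}(H^{even}(X,\bbQ))$ containing the nonzero rank-one element $N_{v_0}$. A classical theorem---either extracted from Dynkin's classification of irreducible subalgebras of classical simple Lie algebras, or proved directly by using the bracket identity $[e_\kappa,a\odot b]=(\kappa a)\odot b+a\odot(\kappa b)$ in $\Sym^2 H^{even}(X,\bbQ)\cong\mathfrak{sp}(H^{even}(X,\bbQ))$, starting from $v_0\odot v_0$, to produce a spanning family of symmetric products in $L^{eq}(X)$ and then polarizing---forces $L^{eq}(X)=\mathfrak{sp}(H^{even}(X,\bbQ))$, giving Zariski density of $G^{eq}(X)$ in $Sp(H^{even}(X,\bbQ))$. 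I expect the main obstacle to be exactly this last step: either carefully invoking the correct classification statement, or disentangling the sums of symmetric products that appear under repeated bracketing so as to recover each individual rank-one operator $N_w$ with $w\in H^{even}(X,\bbQ)$ beyond the explicit algebraic family $\{v_\kappa:=e^\kappa v_0:\kappa\in H^2(X,\bbQ)\}$ already obtained from conjugation by the tensor-product group.
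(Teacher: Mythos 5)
Everything up to your last paragraph is sound and uses the same ingredients as the paper's proof: the skew-symmetry of the Mukai pairing, the elements $e_\kappa\in L^{eq}(X)$ for $\kappa\in NS(X)_\bbQ=H^2(X,\bbQ)$ obtained from tensoring by line bundles, the rank-one nilpotent $N_{v_0}$ obtained from the spherical twist $T_{\cO_X}$ (using $\langle v_0,v_0\rangle=0$ for $n$ odd), the fact that $H^{even}(X,\bbQ)$ is generated as a ring by $H^2(X,\bbQ)$ via weak and hard Lefschetz, and your irreducibility argument is correct. The genuine gap is the final step, which you yourself flag: the statement that an irreducible Lie subalgebra of $\mathfrak{sp}(V)$ containing a single nonzero rank-one element must equal $\mathfrak{sp}(V)$ is true, but it is not a theorem you can simply quote from Dynkin's tables, and the ``polarization'' you gesture at is precisely the nontrivial point --- bracketing elements of $L^{eq}(X)$ against $v_0\odot v_0$ only produces sums of the form $Av_0\odot v_0$, and $L^{eq}(X)\cdot v_0$ need not span $V$, so you cannot directly recover all of $\Sym^2V$. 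As written, the crux of the theorem is asserted rather than proved.

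The paper sidesteps this by staying at the group level and invoking Deligne's Lemme 4.4.2 from Weil II (Proposition \ref{deligne's lemma}): if $G\subset Sp(V,\psi)$ is an algebraic subgroup, $R$ is a $G$-orbit spanning $V$, and $U_\delta\subset G$ for every $\delta\in R$, then $G=Sp(V,\psi)$; no irreducibility is required, only a spanning orbit of vectors whose one-parameter unipotent groups lie in $\overline{G^{eq}(X)}(\bbC)$. You already have such an orbit: every line bundle $M$ is spherical, so $U_{v(M)}\subset\overline{G^{eq}(X)}(\bbC)$, the orbit of $v_0=v(\cO_X)$ contains all $v(M)=e^{c_1(M)}v_0$, these span by your ring-generation argument, and the condition $U_\delta\subset\overline{G^{eq}(X)}(\bbC)$ is conjugation-invariant, so the entire orbit qualifies. (The paper runs the identical argument with the fundamental class $\eta$, obtained as a limit of the directions of $v(L^m)$, as the base point.) If you prefer to stay at the Lie-algebra level, the citable statement is Deligne's Lemme 5.11 of Weil I, which assumes the subalgebra is \emph{generated} by the rank-one operators $x\mapsto\psi(x,\delta)\delta$ and acts irreducibly; to apply it you must either prove irreducibility under the subalgebra generated by the $N_{v(M)}$ alone, or note that this subalgebra is an ideal of the semisimple algebra $L^{eq}(X)$ (its generating set is conjugation-stable), write $L^{eq}(X)=L'\oplus L''$ and $V\cong V'\otimes V''$, and observe that the rank-one element $N_{v_0}\in L'$ acts with rank $\mathrm{rank}(N')\cdot\dim V''$, forcing $\dim V''=1$ and $L''=0$. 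Either patch is short, but one of them is needed to close the proof.
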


\begin{proof} As explained above, the Mukai pairing on $H^{even}(X,\bbQ)$ is skew-symmetric and $G^{eq}(X)\subset Sp(H^{even}(X,\bbQ))$. Let $\overline{G^{eq}(X)}\subset Sp(H^{even}(X,\bbQ))$ be the algebraic $\bbQ$-subgroup which is the Zariski closure of $G^{eq}(X)$. To prove the equality $\overline{G^{eq}(X)}= Sp(H^{even}(X,\bbQ))$
it suffices to show the equality of the Lie groups $\overline{G^{eq}(X)}(\bbC)=Sp(H^{even}(X,\bbC))$.

Since the smooth projective variety $X$ is Calabi-Yau, every line bundle $L$ on $X$ is a spherical object in $D^b(X)$ and as such it defines the corresponding spherical twist functor $T _L$ \cite[Def. 8.3]{Huyb} which is an autoequivalence of the derived category $D^b(X)$. For any spherical object $E\in D^b(X)$ the action of the corresponding twist functor on the cohomology $H^\bullet (X,\bbQ)$ is  the reflection with respect to the Mukai vector $v(E)$:
$$r_{v(E)}(x):=x-\langle v(E),x\rangle v(E)$$
where $\langle -,-\rangle$ is the Mukai pairing on $H^\bullet (X,\bbQ)$ \cite[8.12]{Huyb}. Let
$$Q=\{\delta \in H^{even}(X,\bbC)\ \vert \ r_\delta \in \overline{G^{eq}(X)}(\bbC)\}$$
Note that $Q$ is a closed subset in $H^{even}(X,\bbC)$ and $v(M)\in Q$ for all line bundles $M\in Pic(X)$. If $\delta \in Q$ and $g\in \overline{G^{eq}(X)}(\bbC)$, then also
$$r_{g(\delta)}=g\cdot r_\delta \cdot g^{-1}\in \overline{G^{eq}(X)}(\bbC)$$
So $Q$ is $\overline{G^{eq}(X)}(\bbC)$-invariant.

Moreover, if $\delta \in Q$, then for $k\in \bbZ$
\begin{equation}\label{inf order} r^k _{\delta }(x)=x -k\langle \delta ,x\rangle \delta \quad \text{for all $x\in H^{even}(X,\bbC)$}
\end{equation}
It follows that for any $\delta \in Q$ the 1-parameter subgroup
$$U_\delta =\{x\mapsto x+\lambda \langle x,\delta \rangle \delta \ \vert \ \lambda \in \bbC\}$$
belongs to $\overline{G^{eq}(X)}(\bbC)$. In particular the whole line spanned by $\delta $ is in $Q$. So $Q$ is a cone over the origin in $H^{even}(X,\bbC)$.

At this point we recall the following lemma of Deligne.

\begin{prop} \label{deligne's lemma}  Let $(V,\psi )$ be a finite dimensional symplectic $\bbC$-vector space, $G\subset Sp(V,\psi)$ an algebraic subgroup. Let $R\subset V$ be an $G$-orbit, which spans $V$. Assume that for every $\delta \in R$, $G$ contains the 1-parameter subgroup
$U_\delta =\{x\mapsto x+\lambda (x,\delta)\delta \ \vert \ \lambda \in \bbC\}$. Then $G=Sp(V,\psi)$.
\end{prop}

\begin{proof} This is \cite[Lemma 4.4.2]{Del3}
\end{proof}

To apply this proposition to our case $V=H^{even}(X,\bbC)$, $G=\overline{G^{eq}(X)}(\bbC)$ it suffices to find an element of $Q$, whose $\overline{G^{eq}(X)}(\bbC)$-orbit spans
$H^{even}(X,\bbC)$.  We will show that the fundamental class $\eta \in H^{2n}(X,\bbC)$ is such an element.
For this we will analyze Mukai vectors $v(L)$ of line bundles and their Mukai pairing $\langle v(L_1),v(L_2)\rangle$.

For $a\in H^{even}(X,\bbQ)$ we denote its $i$-th homogeneous component by $a_i$.

\begin{lemma}\label{spans}  The Mukai vectors $v(L)$ of line bundles $L\in Pic(X)$ span the vector space $H^{even}(X,\bbQ)$.
\end{lemma}

\begin{proof} First notice that the ring $H^{even}(X,\bbQ)$ is generated by $H^2(X,\bbQ)=NS(X)_\bbQ$. Indeed, it is well known that the cohomology ring $H^\bullet (\bbP _\Delta ,\bbQ)$ of the nonsingular projective toric variety $\bbP _\Delta$ is generated by $H^2(\bbP _\Delta ,\bbQ)=NS(\bbP _\Delta )_\bbQ$. The smooth subvariety $X\subset \bbP _\Delta$ is a hyperplane section, so by the weak Lefschetz theorem the part $H^{<n}(X,\bbQ)$ is generated by $H^2(X,\bbQ)=NS(X)_\bbQ$. Now the hard Lefschetz theorem for $X$ implies that the whole ring $H^\bullet (X,\bbQ)$ is generated by $H^2(X,\bbQ)$.

To prove the lemma recall that
$$v(F)=ch(F)\cup \sqrt{td_X}$$
where  $(\sqrt{td_X})_0=1$ and hence $\sqrt{td_X}$ is invertible in the ring $H^{even}(X,\bbQ)$. So it suffices to show that the Chern characters of line bundles span $H^{even}(X,\bbQ)$. Let $L_1,...,L_p$ be line bundles such that $c_1(L_1),...,c_1(L_p)$ form a basis of $H^2(X,\bbQ)$. Put $x_i:=c_1(L_i)$. Then monomials $x_1^{m_1}\cdot ...\cdot x_p^{m_p}$ span $H^{even}(X,\bbQ)$ and hence the Chern characters
$$\{ ch(L_1^{k_1}\otimes ...\otimes L_p^{k_p})=\sum _{m_1,...,m_p\geq 0}
\frac{k_1^{m_1}\cdot ...\cdot k_p^{m_p}}{m_1!\cdot ...\cdot m_p!}x_1^{m_1}\cdot ...\cdot x_p^{m_p} \mid  k_1,...,k_p\in \bbZ \}$$
span $H^{even}(X,\bbQ)$ as well.
\end{proof}

Let $\eta \in H^{2n}(X,\bbQ)$ be the fundamental class. Fix an ample line bundle $L$. Then the top component
$(v(L^m))_{2n}$ of the Mukai vector $v(L^m)$ as a function of $m$ is
$$am^n\eta+\text{lower terms},\quad \text{with $a>0$}$$
whereas the components $(v(L^m))_{2d}$ with $d<n$ grow no faster than $m^d$. So as $m\to \infty$ the lines spanned by the Mukai vectors $v(L^m)$ will tend to the line $H^{2n}(X,\bbC)$.
Since $Q$ is a closed subset of $H^{even}(X,\bbC)$ which is a cone over the origin, we find that the line $H^{2n}(X,\bbC)\subset Q$. Therefore $\eta \in Q$.

Note that for any line bundle $M\in Pic(X)$
$$r_{v(M)}(\eta)=\eta -\langle v(M),\eta \rangle v(M)=\eta -v(M)$$
\cite[5.42]{Huyb}. Let $M_1,...,M_t$ be line bundles whose Mukai vectors span $H^{even}(X,\bbC)$. Then the vectors $\eta$ and $r_{v(M_i)}(\eta)$ span $H^{even}(X,\bbC)$ and belong to an orbit of
$\overline{G^{eq}(X)}(\bbC)$, which completes the proof of Theorem \ref{half about geq}.
\end{proof}

\subsection{} On the mirror side we have no definite results. However let us recall the situation with the universal family of $d$-dimensional hypersurfaces $\cY \to S$ in a projective space $\bbP ^{d+1}$.
Assume that $d$ is odd. Let $S^0\subset S$ be the subset parametrizing smooth hypersurfaces, $s\in S^0$ and $Y_s$ the corresponding smooth hypersurface. We are interested in the monodromy representation of $\pi _1(S^0,s)$ in the middle cohomology
$H^d(Y_s)$. A theorem of Beauville \cite{Beau} asserts that the monodromy group is a subgroup of finite index in the corresponding arithmetic group $Sp(H^d(Y_s,\bbZ))$. The proof of this theorem uses a trick and relies on earlier results of \cite{Jan}. However it is relatively easy to prove that the monodromy group is Zariski dense in $Sp(H^d(X,\bbQ))$. Recall the relevant well known facts \cite{Voi}.

(1) For most projective lines $\Delta \simeq \bbP ^1\subset S$ the restriction of the universal family $\cY \to S$ to $\Delta$ is a Lefschetz pencil. That is there exist a finite number of critical values $t_1,...,t_n\in \Delta$ and the corresponding singular fibers $Y_i$ have a unique nondegenerate singular point.

(2) The map of the fundamental groups $\pi _1(\Delta \cap S^0,s)\to \pi _1(S^0,s)$ is surjective.

(3) For each $t_i$ there exists a unique {\it vanishing cycle} $\delta _i\in H^d(Y_s,\bbQ)$ with the following properties:
\begin{itemize}
\item The cycles $\delta _i$, $i=1,...,n$ span $H^d(Y_s,\bbQ)$.
\item The local monodromy around $t_i$ is the reflection about $\delta _i$, i.e. it is $x\mapsto x\pm (x,\delta _i)\delta _i$, where $(-,-)$ is the skew-symmetric Poincare pairing on $H^d(Y_s,\bbQ)$.
\item The monodromy representation of $\pi _1(\Delta \cap S^0,s)$ in $H^d(Y_s,\bbQ)$ is irreducible.
    \end{itemize}

It is not difficult to deduce from (3) that the monodromy group is Zariski dense in $Sp(H^d(Y_s,\bbQ))$ \cite[5.11]{Del2}.

We recalled the case of hypersurfaces in the projective space to stress the analogy between the monodromy group $G^{mon}$ and the group $G^{eq}$ as in Theorem \ref{half about geq}. Indeed, both groups contain "many reflections".

Coming back to our family $\cX ^*$ of quasi-smooth Calabi-Yau varieties in $\hat{\bbP}_{\Delta ^*}$, we don't know if Lefschetz pencils with the properties (1),(2),(3) exist, and so we do not know how to analyze the monodromy group $G^{mon}(\cX ^*)$.


\begin{thebibliography}{9999999}

\bibitem[AdTho]{AdTho} N.\,Addington, R.\,Thomas, "Hodge theory and derived categories of cubic fourfolds", Duke Math. J. 163 (2014), no. 10, 1885-1927.

\bibitem[Bat]{Bat} V.\,Batyrev, "Dual polyhedra and mirror symmetry for Calabi-Yau hypersurfaces in toric varieties", J. Algebraic Geom. 3 (1994), no. 3, 493-535.

\bibitem[BatBor]{BatBor} V.\,Batyrev, L.\,Borisov, "Mirror duality and string-theoretic Hodge numbers", Invent. Math. 126, 183-203 (1996).

\bibitem[Beau]{Beau} A.\,Beauville, "Le group de monodromie des familles universelles d'hypersurfaces et d'intersections completes", LNM 1194, (1986) pp. 8-18.

\bibitem[Beau2]{Beau2} A.\,Beauville, "Variétés Kähleriennes dont la première classe de Chern est nulle", (French) [Kähler manifolds whose first Chern class is zero] J. Differential Geom. 18 (1983), no. 4, 755–782 (1984)

\bibitem[BirLa]{BirLa} Ch.\,Birkenhake, H.\,Lange, "Complex abelian varieties", Springer, 2004.

\bibitem[BKR]{BKR} T.\, Bridgeland, A.\, King, M.\, Reid, "The McKay correspondence as an equivalence of derived categories", J. Amer. Math. Soc. 14 (2001), no. 3, 535-554.

\bibitem[Bor]{Bor} A.\, Borel, "Density and maximality of arithmetic subgroups", J. Reine Angew. Math. 224 (1966), 78-89


\bibitem[BorHC]{BorHC} A.\,Borel, Harish-Chandra, "Arithmetic subgroups of algebraic groups", Bull. Amer. Math. Soc. 67 (1961), 579-583.

\bibitem[BorHor]{BorHor} L.\,Borisov, P.\,Horja, "Mellin–Barnes integrals as Fourier–Mukai transforms", Advances in Mathematics 207 (2006) 876–927.

\bibitem[Bou]{Bou} N.\,Bourbaki, "Groupes et Alg\'{e}rbes de Lie", Hermann, 1971.

\bibitem[Del1]{Del1} P.\,Deligne, "Vari\'et\'es de Shimura: interpretation modulaire, et techniques de construction de modeles canoniques", Proc. of Symp. in Pure Math., vol.33(1979), part 2, pp.247-290.

\bibitem[Del2]{Del2} P.\,Deligne, "La conjecture de Weil : I", Publ. Math. IHES, tome 43, (1974), p.273-307.

\bibitem[Del3]{Del3} P.\,Deligne, "La conjecture de Weil : II', Publ. Math. IHES, tome 43, (1980), p.137-252.

\bibitem[Dolg]{Dolg} I.\,Dolgachev, "Mirror symmetry for lattice polarized K3 surfaces", J. Math. Sci. 81 (1996), No. 3, 2599-2630.

\bibitem[FT]{FT} B.\, L.\, Feigin, B.\, L.\, Tsygan, "Additive K-theory and crystalline cohomology", Functional. Anal. i Prilozh., 1985, Vol. 19, no. 2, 52-62.


\bibitem[GLO]{GLO} V.\,Golyshev, V.\,Lunts, D.\,Orlov, "Mirror symmetry for abelian varieties", J. Algebraic Geometry 10 (2001) 433-496.

\bibitem[Ha]{Ha} M.\, Haiman, Hilbert schemes, polygraphs, and the MacDonald positivity conjecture", J. Amer. Math. Soc. 14 (2001), no. 4, 941–1006.

\bibitem[Hart]{Hart} R.\, Hartshorne, "Algebraic de Rham cohomology", Manuscripta Math. 7, 125 - 140 (1972)

\bibitem[Hart2]{Hart2} R.\, Hartshorne, "Residues and duality", LNM 20.

\bibitem[Helg]{Helg} S.\,Helgason, "Differential geometry, Lie groups, and Symmetric spaces", Academic press, 1978.

\bibitem[Hor]{Hor} P.\,Horja, "Hypergeometric functions and mirror symmetry in toric varieties", preprint, math.AG/9912109.

\bibitem[Huyb]{Huyb} D.\,Huybrechts, "Fourier-Mukai transforms in algebraic geometry", Claredon press, Oxford, 2006.

\bibitem[Huyb2]{Huyb2} D.\,Huybrechts, Compact hyper-Kähler manifolds: basic results. Invent. Math. 135 (1999), no. 1, 63–113.

\bibitem[Huyb3]{Huyb3} D.\,Huybrechts, "Lectures on K3 surfaces", Cambridge univ. press (2016).

\bibitem[Jan]{Jan} W.A.M. Janssen  "Skew-symmetric vanishing lattices and their monodromy group",
Math. Ann. 266 (1983), 115-133; 272 (1985), 17-22.

\bibitem[Kel1]{Kel1} B.\, Keller, "On cyclic homology of ringed spaces and schemes", Doc. Math. 3  (1998) 231-259.

\bibitem[Kod]{Kod} K.\,Kodaira, "On the structure of Compact Analytic Surfaces, I", Amer. J. Math. Vol. 86, (1964) pp.751-798.

\bibitem[Kon]{Kon} M.\,Kontsevich, Lecture at Rutgers University, November 11, 1996 (unpublished).

\bibitem[K3]{K3} "G\'{e}ometrie des surfaces K3: modules et p\'{e}riodes", Ast\'{e}risque, 126 (1985).

\bibitem[LL]{LL} E.\, Looijenga, V.A.\,Lunts, "A Lie algebra attached to a projective variety", Invent. math. 129, 361-412 (1997).

\bibitem[Mu1]{Mu1} D.\,Mumford, "Abelian varieties", Tata lectures, Bombay, 1968.

\bibitem[Mu2]{Mu2} D.\,Mumford, "Families of abelian varieties", 1966 Algebraic Groups and Discontinuous subgroups (Proc. Sympos. Pure Math., Boulder, Colo, 1965) pp.347-351, Amer. Math. Soc., Providence, R.I.


\bibitem[Or1]{Or1} D.\,Orlov, "On equivalences of derived categories and K3 surfaces", J. Math. Sci. (New York) 84 (1997) 1361-1381.

\bibitem[Or2]{Or2} D.\,Orlov, "Derived categories of coherent sheaves on abelian varieties and equivalences between them", Izv. Math. 66 (2002), 569-594.

\bibitem[Pink]{Pink} H.\,Pinkham, "Singularites exceptionnelles, la dualite etrange d'Arnold et les surfaces K3," C. R. Acad.
Sci. Paris, Set. A-B, 284, 615-618 (1977).

\bibitem[Plo]{Plo} D.\, Ploog, "Equivariant autoequivalences for finite group actions", Advances of Mathematics 216 (2007) 62-74.

\bibitem[Pol]{Pol} A.\,Polishchuk, Thesis, Harvard (1996).

\bibitem[Pjat]{Pjat} I.\,I.\,Pjateckij-Shapiro, "Geometry of classical domains and theory of automorphic functions",
    (in Russian) Moscow, 1961.

\bibitem[Szen]{Szen} B.\,Szendroi, "Diffeomorphisms and families of Fourier-Mukai transforms in mirror symmetry", in: Applications of Algebraic Geometry to Coding Theory, Physics and Computation, NATO Scinence Series, pp. 317-337, Kluver 2001. MR1866907 (2003a:14059)

\bibitem[Tael]{Tael} L.\, Taelman, "Derived equivalence of hyperkahler varieties", arXiv:1906.08081.

\bibitem[Verb]{Verb} M.\, Verbitsky, "Cohomology of compact hyperkahler manifolds, arXiv:alg-geom/9501001.

\bibitem[Voi]{Voi} C.\,Voisin, "Hodge theory and complex algebraic geometry II", Cambridge univ. press, 2003.

\bibitem[Wei]{Wei} C.\, Weibel, "The Hodge filtration and cyclic homology", K-Theory 12 (1997), no. 2, 145–164

\bibitem[Wei-Gel]{Wei-Gel} C.\, Weibel, S.\, Geller, "Etale descent for Hochshild and cyclic homology", Comment. Math. Helv. 66 (1991), 368-388.






\end{thebibliography}
\end{document}